\let\oldmarginpar\marginpar
\renewcommand\marginpar[1]{\-\oldmarginpar[\raggedleft\footnotesize #1]%
{\raggedright\footnotesize #1}} %footnotesize
\newtheorem{thm}{Theorem}[section]
\newtheorem{prop}[thm]{Proposition}
\newtheorem{cor}[thm]{Corollary}
\newtheorem{conj}[thm]{Conjecture}
\newtheorem{lemma}[thm]{Lemma}
\theoremstyle{definition}
\crefname{defi}{Definition}{Definitions}
\theoremstyle{remark}
\newcounter{cas}
\newtheoremstyle{assert}
  {.5\baselineskip±.2\baselineskip}   % ABOVESPACE
  {.5\baselineskip±.2\baselineskip}   % BELOWSPACE
  {\itshape}  % BODYFONT
  {0pt}       % INDENT (empty value is the same as 0pt)
  {\bfseries} % HEADFONT
  {.}         % HEADPUNCT
  {5pt plus 1pt minus 1pt} % HEADSPACE
  {(\thmnumber{#2})}          % CUSTOM-HEAD-SPEC
\theoremstyle{assert}
\DeclareMathOperator{\et}{and}
\newcommand{\restrict}[2]{{#1}_{\left|#2\right.}}
\newcommand{\eps}{\varepsilon}
\newcommand{\bsig}{{\bm{\sigma}}}
\newcommand{\bX}{\mathbf{X}}
\newcommand{\bx}{\mathbf{x}}
\newcommand{\bS}{\mathbf{S}}
\newcommand{\cH}{\mathcal{H}}
\newcommand{\cU}{\mathcal{U}}
\newcommand{\C}{\mathscr{C}}
\newcommand{\brc}[1]{\left\{#1 \right\}}
\newcommand{\pth}[1]{\left(#1 \right)}
\newcommand{\floor}[1]{\left\lfloor #1 \right\rfloor}
\newcommand{\ceil}[1]{\left\lceil #1 \right\rceil}
\newcommand{\pr}[1]{\mathbb{P}\left[ #1 \right]}
\newcommand{\esp}[1]{\mathbb{E}\left[ #1 \right]}
\newcommand{\midbar}{\;\middle|\;}
\newcommand{\sst}[2]{\left\{#1\,:\,#2\right\}}
\newcommand{\odd}{\chi_o}
\newcommand{\pcf}{\chi_{\rm pcf}}
\tikzstyle{vertex} = [draw,fill,shape=circle,node distance=80pt]
\tikzstyle{wertex} = [draw=black,fill=white,shape=circle,node distance=80pt]
\tikzstyle{gertex} = [draw=black,fill=black!25,shape=circle,node distance=80pt]
\tikzstyle{edge} = [fill,opacity=.5,fill opacity=.5,line cap=round, line join=round, line width=50pt]
\newcommand{\newpar}[1]{%
    \par
    \addvspace{\medskipamount}
    \noindent\textit{#1\@addpunct{.}}\enspace\ignorespaces
    }
\title{New bounds for odd colourings of graphs}
\author{Tianjiao Dai}
\address{\'Equipe GALaC, LISN (Université Paris-Saclay),
Gif sur Yvette, France.}
\email{dai@lisn.fr}
\author{Qiancheng Ouyang}
\address{\'Equipe GALaC, LISN (Université Paris-Saclay),
Gif sur Yvette, France.}
\email{qiancheng.ouyang@lisn.fr}
\author{François Pirot}
\address{\'Equipe GALaC, LISN (Université Paris-Saclay),
Gif sur Yvette, France.}
\email{francois.pirot@lisn.fr}
\begin{document}

\begin{abstract}
Given a graph $G$, a vertex-colouring $\sigma$ of $G$, and a subset $X\subseteq V(G)$, a colour $x \in \sigma(X)$ is said to be \emph{odd} for $X$ in $\sigma$ if it has an odd number of occurrences in $X$. We say that $\sigma$ is an \emph{odd colouring} of $G$ if it is proper and every (open) neighbourhood has an odd colour in $\sigma$.
%A proper vertex-colouring of a graph $G$ is said to be odd if for each non-isolated vertex there is a colour appearing an odd number of times on its neighbourhood. 
The odd chromatic number of a graph $G$, denoted by $\odd(G)$, is the minimum $k\in\mathbb{N}$ such that an odd colouring $\sigma \colon V(G)\to [k]$ exists. 
%A \emph{proper conflict-free $k$-colouring} (pcf $k$-colouring for short) is a mapping $\sigma \colon V(G) \to [k]$ such that $\sigma(u)\neq \sigma(v)$ for every edge $uv\in E(G)$, and in every neighbourhood $N(v)$ there is a colour that appears exactly once. Let $\pcf(G)$ be the minimum $k$ such that there exists a pcf $k$-colouring of $G$. It holds that $\odd(G) \le \pcf(G)$ for every graph $G$.
%
In a recent paper, Caro, Petru\v sevski and \v Skrekovski conjectured that every connected graph of maximum degree $\Delta\ge 3$ has odd-chromatic number at most $\Delta+1$. We prove that this conjecture holds asymptotically:
for every connected graph $G$ with maximum degree $\Delta$, $\odd(G)\le\Delta+O(\ln\Delta)$ as $\Delta \to \infty$. We also prove that $\odd(G)\le\lfloor3\Delta/2\rfloor+2$ for every $\Delta$. 
If moreover the minimum degree $\delta$ of $G$ is sufficiently large, we have $\odd(G) \le \chi(G) + O(\Delta \ln \Delta/\delta)$ and $\odd(G) = O(\chi(G)\ln \Delta)$.
Finally, given an integer $h\ge 1$, we study the generalisation of these results to $h$-odd colourings, where every vertex $v$ must have at least $\min \{\deg(v),h\}$ odd colours in its neighbourhood.
Many of our results are tight up to some multiplicative constant.
\end{abstract}

\maketitle

\vspace{-8pt}
\section{Introduction}

%Structure of the introduction:
%\begin{itemize}
 %   \item Define CF colourings on hypergraphs + explain where the motivation comes from
 %   \item Introduce pcf-colourings (with neighbourhood hypergraphs). Quote some results and open problems regarding pcf-colourings ($\Delta+1$-conjecture).
 %   \item Mention our result : $1.5\Delta$.
 %   \item Introduce odd colouring, a weakening of pcf-colourings which has attracted some interest in the last couple of years. Quote the $\Delta+1$-conjecture for odd colouring.
  %  \item We prove that the conjecture holds asymptotically as $\Delta\to \infty$.
  %  \item Explain that the gap $\odd(G) - \chi(G)$ can be arbitrarily large because of small degree vertices. We prove that the gap is small under some minimum degree condition.
  %  \item We explore the generalisation of CF/odd colourings when we require that $r$ colours satisfy the requirement in each neighbourhood (rather than $1$).
%\end{itemize}

%\todo[inline]{Define somewhere the concept of \emph{odd colour} (for a hyperedge in the hypergraph setting, or for a vertex in the neighbourhood setting), and use that terminology throughout the paper.}

\noindent
All graphs considered here are finite, undirected, and simple. Let $[k]$ denote the set of first $k$ positive integers. For a graph $G$, let $V(G)$ and $E(G)$ denote its vertex and edge sets, respectively. A hypergraph $\cH=(V(\cH), E(\cH))$ is a generalisation  of a graph; its (hyper-)edges are subsets of $V(\cH)$ of arbitrary positive size. A vertex \emph{$k$-colouring} of a graph or a hypergraph $G$ is an assignment $\sigma \colon V(G)\to [k]$, whose images are referred to as \emph{colours}. Motivated by a frequency assignment problem in cellular networks, Even, Lotker, Ron, and Smorodinsky~\cite{even2003conflict} introduced the notion of conflict-free colourings of hypergraphs. A colouring $\sigma$ of a hypergraph $\cH$ is \emph{conflict-free} if for every edge $e\in E(\cH)$ there exists a colour appearing exactly once in $e$. Pach and Tardos \cite{pach2009conflict} studied
this notion and proved that every hypergraph with fewer than $\binom{s}{2}$ edges (for some integer $s$) has a
conflict-free colouring with fewer than $s$ colours. Kostochka, Kumbhat, and Luczak \cite{kostochka2012conflict}
further studied conflict-free colouring for uniform hypergraphs.

\subsection{A motivation coming from proper conflict-free colourings}
A colouring $\sigma$ of a graph $G$ is \emph{proper} if $\sigma(u)\neq \sigma(v)$ for all $uv\in E(G)$. The \emph{chromatic number} of a graph $G$, denoted by $\chi(G)$, is the minimum integer $k$ such that there is a $k$-colouring of $G$. 
One can observe that a conflict-free colouring of a hypergraph $\cH$ is in particular a proper colouring of the graph formed by the hyper-edges of size $2$ in $\cH$. Given a graph $G$ and a hypergraph $\cH$ on the same vertex set $V$, we say that $(G,\cH)$ is a \emph{graph-hypergraph pair}. A \emph{proper conflict-free $k$-colouring of $(G,\cH)$} (\emph{pcf $k$-colouring} for short) is a mapping $\sigma\colon V \to [k]$ that is both a proper colouring of $G$ and a conflict-free colouring of $\cH$. By the previous observation, this can be seen as a conflict-free colouring of the hypergraph $(V, E(\cH)\cup E(G))$.
There are many classical constraints for a proper colouring of a given graph $G$ that are in particular satisfied by a pcf-colouring of $(G,\cH)$, for some hypergraph $\cH$ carefully chosen. For instance, if $E(\cH)$ contains a maximum independent set of every even cycle in $G$, then a pcf-colouring of $(G,\cH)$ is in particular an acyclic colouring of $G$. 
As another example, if $E(\cH)$ contains all $(\beta+1)$-subsets of every neighbourhood in $G$, then a pcf-colouring of $(G,\cH)$ is in particular a $\beta$-frugal colouring of $G$.

There has been a specific focus on the special case of pcf-colourings $\sigma$ of $(G,\cH)$  when $\cH$ is the neighbourhood-hypergraph of $G$, i.e. $E(\cH) = \{N(v) : v\in V(G), \deg(v)>0\}$.
In that case, we say that $\sigma$ is a pcf-colouring of $G$ (so when we omit $\cH$, it is implicitly the neighbourhood-hypergraph of $G$). In other words, a \emph{pcf-colouring} of $G$ is a proper colouring of $G$ such that for every non-isolated vertex $v$, there is a colour that appears exactly once among the neighbours of $v$. We let $\pcf(G)$ be the smallest integer $k$ such that a pcf $k$-colouring of $G$ exists. This notion is the combination of proper colouring and the pointed conflict-free chromatic parameter introduced by Cheilaris \cite{Chei09}.

The notion of pcf-colourings of graphs was formally introduced by Fabrici, Lu\v{z}ar, Rindo\v{s}ov\'{a}, and Sot\'{a}k \cite{Fab23}, where they investigated the pcf-colourings of planar and outerplanar
graphs, among many other related variants of a proper conflict-free colouring. They proved that $\pcf(G)\le 8$ for all planar graphs and $\pcf(G)\le 5$ for all outerplanar graphs. Further studies in pcf-colourings of sparse graphs can be found in \cite{CPS23,CCKP22+pcf,Fab23,Hic22+,Liu22+}.

Given a (hyper-)graph $\cH$, 
the \emph{degree} of a vertex $v$, denoted by $\deg_\cH(v)$, is the number of edges of $\cH$ containing $v$.
We denote by $\delta(\cH)$ and $\Delta(\cH)$ the minimum and maximum degrees of $\cH$, respectively. 
%The \emph{rank} of $\cH$, denoted $R(\cH)$, is the maximum size of an edge in $\cH$. 
We denote $\epsilon(\cH)$ the minimum size of an edge in $\cH$.
Given a graph $G$, we let $\delta^*(G)$ denote the \emph{degeneracy} of $G$, that is $\delta^*(G) = \max_{H\subseteq G} \delta(H)$.
%The \emph{conflict-free chromatic number} of a graph $G$, denoted by $\pcf(G)$, is the minimum integer $k$ such that there is a pcf $k$-colouring of $G$. 
Caro, Petru\v{s}evski, and \v{S}krekovski \cite{CPS23} proposed the following conjecture about pcf-colourings.

\begin{conj}[Caro, Petru\v{s}evski, \v{S}krekovski {\cite[Conjecture 6.4]{CPS23}}]\label{conj:pcf}
    If $G$ is a connected graph of maximum degree $\Delta\ge 3$, then $\pcf(G)\le \Delta+1$.
\end{conj}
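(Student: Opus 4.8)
The plan is to study a minimal counterexample and whittle it down to a single rigid family that can be treated by hand. So suppose $G$ is connected with maximum degree $\Delta \ge 3$, that $\pcf(G) \ge \Delta+2$, and that every connected graph with fewer vertices and maximum degree at most $\Delta$ admits a pcf $(\Delta+1)$-colouring (components of maximum degree at most $2$, being unions of paths and cycles, are dispatched directly since $\Delta \ge 3$). The first phase is to exhaust the sparse reductions. If $G$ has a vertex $v$ of degree at most $\Delta-1$, I delete it, pcf-colour each component of $G-v$ with $\Delta+1$ colours by minimality, and reinsert $v$. A proper colour for $v$ exists because $\deg(v) \le \Delta-1 < \Delta+1$; the subtle point is that $v$ must acquire its own conflict-free witness and must not destroy the witness of any neighbour $u$ whose private colour equals $\sigma(v)$. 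Each neighbour $u$ is threatened by exactly one colour value (its current witness colour), so the reinsertion is a bounded constraint-satisfaction task; the available slack, together with the freedom to recolour $v$'s neighbours locally, should force a safe choice. The same deletion–reinsertion idea applied at a cut vertex or across a small separator should let me assume $G$ is $2$-connected and $\Delta$-regular.

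The heart of the matter is therefore the $\Delta$-regular, $2$-connected case. Here I would begin from an arbitrary proper $(\Delta+1)$-colouring $\sigma$, which exists by Brooks' theorem unless $G = K_{\Delta+1}$ --- and $K_{\Delta+1}$ is already pcf, since its unique proper $(\Delta+1)$-colouring is rainbow and hence every neighbourhood contains only singletons. Call a vertex $v$ \emph{bad} if every colour occurs at least twice in $N(v)$. Because $\abs{N(v)} = \Delta$ and only the $\Delta$ colours distinct from $\sigma(v)$ can appear there, a bad vertex uses at most $\floor{\Delta/2}$ colours in its neighbourhood, so at least $\ceil{\Delta/2}$ colours are \emph{absent} from $N(v)$. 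The plan is to repair $v$ by recolouring one neighbour $u$ to a colour absent from $N(v)$, which instantly creates a singleton in $N(v)$; the large pool of absent colours should, generically, leave a colour that is simultaneously absent from $N(v)$ and proper at $u$.

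The hard part --- and the reason the exact $\Delta+1$ bound is so stubborn --- is that the conflict-free constraint is neither local nor monotone under recolouring: altering $\sigma(u)$ can obliterate the unique witness of some third vertex $w \in N(u)$, so repairing one bad vertex may create another, and the absent-versus-proper counting above is genuinely tight (in the universe of $\Delta$ non-$\sigma(v)$ colours, inclusion–exclusion between the $\ceil{\Delta/2}$ absent colours and the colours proper at $u$ need not guarantee a common choice). Unlike the sparse setting, the purely degree-bounded hypothesis offers no Euler-type global budget to drive a discharging argument, and no evident potential function decreases monotonically; this is precisely why the probabilistic method reaches only $\Delta + O(\ln\Delta)$ and the palette-partition method only $\floor{3\Delta/2}+2$.

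My best hope for closing the gap is to recast the simultaneous repair as a single constraint-satisfaction instance over the absent-colour pools and to invoke a Lov\'asz-Local-Lemma-with-lopsidependency argument, or an entropy-compression scheme, whose bad events are exactly the events ``$v$ is bad''. The crux is then to prove that the dependency structure among these events is sparse enough --- exploiting that a bad vertex has $\Omega(\Delta)$ absent colours rather than the $O(\ln\Delta)$ slack the naive union bound demands --- that one extra colour beyond $\Delta$ genuinely suffices. I expect this last step to be the true obstacle: controlling the cascade of broken witnesses with only a single unit of slack is, in effect, equivalent to settling \cref{conj:pcf}, which explains why it remains open and why this paper proves only its asymptotic and constant-factor relaxations.
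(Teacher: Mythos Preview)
The statement you are attempting to prove is \cref{conj:pcf}, which the paper lists as an \emph{open conjecture} of Caro, Petru\v{s}evski, and \v{S}krekovski; the paper does not prove it and indeed explicitly works only toward weaker bounds ($\pcf(G)\le \Delta+O(\ln\Delta)$ via the probabilistic method, and $\odd(G)\le \floor{3\Delta/2}+2$ greedily). So there is no ``paper's own proof'' to compare against, and your final paragraph already concedes this: you write that controlling the cascade of broken witnesses ``is, in effect, equivalent to settling \cref{conj:pcf}, which explains why it remains open.'' That is an accurate self-assessment --- your proposal is a sketch of why the problem is hard, not a proof.

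Beyond the acknowledged obstacle at the end, the earlier reductions are also genuine gaps rather than routine steps. When you delete a vertex $v$ of degree at most $\Delta-1$ and reinsert it, each neighbour $u$ forbids its own colour (properness) \emph{and} its witness colour (to avoid destroying $u$'s conflict-free witness), and on top of that $v$ itself must acquire a witness; this is up to $2(\Delta-1)$ forbidden colours against a palette of $\Delta+1$, so ``the available slack \ldots\ should force a safe choice'' is not justified without a real argument, and the appeal to ``freedom to recolour $v$'s neighbours locally'' reintroduces exactly the cascade problem you later identify as fatal. Likewise, the reduction to $\Delta$-regular $2$-connected graphs is asserted but not carried out. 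In short, every phase of the outline relies on the same non-monotone recolouring difficulty, and none of the counting arguments close with only one colour of slack --- which is precisely why the conjecture is open.
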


As a first step toward their conjecture, Caro, Petru\v{s}evski, and \v{S}krekovski \cite{CPS23} proved that for such a graph $G$, $\pcf(G)\le\floor{2.5\Delta}$. Recently, it has been observed by Cranston and Liu \cite{CrLi22+} that $\pcf(G) \le \Delta(G)+\delta^*(G)+1$ (they actually more generally proved that there always exists a pcf $(\Delta(\cH)+\delta^*(G)+1)$-colouring of any given pair $(G,\cH)$).
They further reduced the gap to Conjecture~\ref{conj:pcf} by proving that $\pcf(G)\le \ceil{1.6550826\Delta+ \sqrt{\Delta}}$, given that $\Delta$ is large enough. 
% Our first result is an improvement of that bound, the proof of which is notably fairly short and simple.

% \begin{thm}
%     \label{thm:pcf}
%     For every graph $G$ of maximum degree $\Delta$, it holds that 
%     \[ \pcf(G) \le \floor{\frac{3\Delta}{2}}+2.\]
% \end{thm}

\subsection{Odd colourings}
In \cite{Chei13}, Cheilaris, Keszegh, and P\'{a}lv\"{o}lgyi introduced a weakening of conflict-free colourings. An \emph{odd colouring} $\sigma$ of a hypergraph $\cH$ satisfies the constraint that in every edge $e\in E(\cH)$ there is a colour $x$ with an odd number of occurrences in $\sigma$; we say that $x$ is an \emph{odd colour} of $e$ in $\sigma$. It is straightforward that a conflict-free colouring of $\cH$ is in particular an odd colouring of $\cH$.
Petru\v{s}evski and \v{S}krekovski \cite{PeSk22} later considered that notion applied to the neighbourhood-hypergraph of a graph $G$. An \emph{odd colouring} of $G$ is a proper colouring of $G$ with the additional constraint that each non-isolated vertex has a colour appearing an odd number of times in its neighbourhood. The \emph{odd chromatic number} of $G$, denoted by $\odd(G)$, is the minimum integer $k$ such that an odd $k$-colouring of $G$ exists. Since odd colourings are a weakening of pcf-colourings, it always holds that $\odd(G) \le \pcf(G)$. In the last couple of years, there has been some interest in determining the extremal value of $\odd$ in various classes of graphs.

In \cite{PeSk22}, Petru\v{s}evski and \v{S}krekovski showed that $\odd(G)\le 9$ for every planar graph $G$ with a proof that relies on the discharging method (note that this is weaker than the result about pcf-colourings of planar graphs from \cite{CCKP22+pcf} stated earlier, which was proved a couple of months later). Furthermore, they conjectured that this bound may be reduced to $5$.
If true, this would be tight, since $\odd(C_5)=5$. 
Recently there has been considerable attention in odd colourings of planar graphs \cite{CPS22,CCKP23,Cra22+,CLS23,PePo23}.

%\begin{conj}[Petru\v{s}evski and \v{S}krekovski \cite{}]\label{conj:planar}
%Every planar graph admits an odd 5-colouring
%\end{conj}

%Recently there has been considerable interest in odd colouring on planar graphs.......

Caro, Petru\v{s}evski, and \v{S}krekovski \cite{CPS22} also studied various properties of the odd chromatic number of general graphs; in particular, they proved the following facts: every graph of maximum degree three has an odd 4-colouring; every graph, except for $C_5$, of maximum degree $\Delta$ has an odd $2\Delta$-colouring. Moreover, they proposed a conjecture for general graphs, which is a weaker form of Conjecture~\ref{conj:pcf}.

\begin{conj}[Caro, Petru\v{s}evski, \v{S}krekovski {\cite[Conjecture 5.5]{CPS22}}]\label{conj:graph}
If $G$ is a connected graph of maximum degree $\Delta\ge3$, then $\odd(G)\le\Delta+1.$
\end{conj}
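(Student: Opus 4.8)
The plan is to peel the odd constraint down to the even-degree vertices, start from a near-optimal proper colouring, and repair the few offending vertices by local recolourings.

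\newpar{Parity reduction}
The first step is the elementary but decisive observation that the colour multiplicities in $N(v)$ sum to $\deg(v)$, so if $\deg(v)$ is odd then some colour necessarily occurs an odd number of times in $N(v)$, whatever the colouring. Hence every odd-degree vertex satisfies the odd constraint for free, and it suffices to produce a proper $(\Delta+1)$-colouring in which no \emph{even}-degree vertex is \emph{bad}, where $v$ is bad when every colour occurs an even number of times in $N(v)$. Since $G$ is connected with $\Delta\ge 3$, Brooks' theorem gives a proper $\Delta$-colouring unless $G$ is a complete graph (an odd cycle has $\Delta=2$); a complete graph receives a proper $(\Delta+1)$-colouring in which every neighbourhood is rainbow, hence already odd, so that case is settled directly. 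In all remaining cases one colour of the palette $[\Delta+1]$ is therefore free to be used for repairs.

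\newpar{Local repair is trivial}
Fix such a proper colouring $\sigma_0$ and let $v$ be a bad even-degree vertex. Recolouring a single neighbour $u\in N(v)$ to any colour free at $u$ --- at least one exists since $\deg(u)\le\Delta<\Delta+1$ --- changes the old colour of $u$ from an even to an odd count in $N(v)$, so $v$ becomes good; the chosen colour differs from $\sigma_0(v)$ (that colour is forbidden at $u$ because $u\in N(v)$), so properness is preserved. Thus each bad vertex is fixable in isolation, and the entire difficulty is global: recolouring $u$ flips two colour-parities in $N(w)$ for every other neighbour $w$ of $u$, and can turn a good even-degree $w$ bad.

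\newpar{Conflict management and the main obstacle}
The remaining task is to choose the repairs simultaneously so that no even-degree vertex ends up bad. I would phrase this as a constraint-satisfaction instance in which the variables are the (possibly recoloured) vertex-colours and the bad events are ``$w$ even-degree and all-even'', and attack it either by the Lovász Local Lemma or by an entropy-compression recolouring scheme; the slack such methods require comes from the freedom in which neighbour to flip and from the abundance of colours available at each flip. This works comfortably once degrees are large, because for an even-degree vertex $w$ the probability of being all-even under a random proper colouring decreases with $\deg(w)$. The principal obstacle is the small-even-degree regime, and above all degree $2$: a degree-$2$ vertex is bad exactly when its two neighbours receive the same colour, a single forced pattern that a uniform random proper $(\Delta+1)$-colouring exhibits with probability $\Theta(1/\Delta)$ --- too large for the Local Lemma to clear at a budget of exactly $\Delta+1$ colours. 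These low-degree vertices would instead be treated structurally, by propagating each such constraint onto the pair of its endpoints and properly colouring the resulting augmented graph, while bounding the degree increase this causes. I expect the genuine work to lie in reconciling this bounded-degree structural analysis with the probabilistic large-degree argument uniformly over all $\Delta\ge 3$, including the intermediate values where neither probabilistic slack nor a finite case-check is available; it is precisely this reconciliation that I expect to be the crux of \Cref{conj:graph}.
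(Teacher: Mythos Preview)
The statement you are attempting is \Cref{conj:graph}, which is an \emph{open conjecture}; the paper does not prove it. What the paper establishes are the weaker bounds of \Cref{thm:main} ($\odd(G)\le \Delta+O(\ln\Delta)$) and \Cref{thm:smalldegree} ($\odd(G)\le \lfloor 3\Delta/2\rfloor+2$). There is therefore no proof in the paper to compare your proposal against.

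As for your proposal itself, it is a plan rather than a proof, and you say so explicitly: the final paragraph concedes that ``the genuine work'' of reconciling the probabilistic large-degree argument with the small-even-degree structural analysis is still to be done. That concession is accurate; the gap is real and is exactly why the conjecture remains open. Two concrete points. First, your parity reduction and the one-vertex local repair are correct and well known, but the cascading effect you flag (a repair at $u$ flips two parities in $N(w)$ for every other neighbour $w$) is the whole problem, and nothing in the sketch controls it. Second, your suggested treatment of degree-$2$ vertices --- add an edge between the two endpoints and properly colour the augmented graph --- does not come with any bound guaranteeing the augmented graph is $(\Delta+1)$-colourable: a vertex $u$ can gain up to $\deg(u)$ new edges (one per degree-$2$ neighbour), so the augmented graph may have maximum degree up to $2\Delta$, and Brooks gives nothing useful there. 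The $1$-subdivision of $K_{\Delta+1}$ (which the paper itself cites as a tight example) shows that these degree-$2$ constraints alone can force all $\Delta+1$ colours, so one cannot hope to handle them as a minor perturbation.

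In short: your parity observation and local-repair step are sound preliminaries, but the proposal stops precisely where the difficulty begins, and the paper does not resolve that difficulty either.
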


Our main result states that Conjecture \ref{conj:graph} holds asymptotically as $\Delta\to \infty$.

\begin{thm}\label{thm:main}
For every graph $G$ of maximum degree $\Delta$,
\[\odd(G)\le\Delta+ \ceil{4(\ln\Delta+\ln\ln \Delta +3)}.\]
\end{thm}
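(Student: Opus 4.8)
The plan is to use the Lovász Local Lemma together with a two-phase random colouring. We work with a palette of size $k = \Delta + t$ where $t = \ceil{4(\ln\Delta + \ln\ln\Delta + 3)}$; we may assume $\Delta$ is large, since for bounded $\Delta$ the bound $\odd(G) \le 2\Delta$ from Caro, Petru\v{s}evski, and \v{S}krekovski already suffices (and gives something of the right shape for small $\Delta$ after adjusting constants). First I would colour every vertex independently and uniformly at random from $[k]$. The two types of bad events are: for an edge $uv \in E(G)$, the event $A_{uv}$ that $\sigma(u) = \sigma(v)$; and for a non-isolated vertex $v$, the event $B_v$ that no colour occurs an odd number of times in $N(v)$ — equivalently, that every colour class within $N(v)$ has even size. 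Each $A_{uv}$ has probability $1/k$, and each $B_v$ depends only on the colours of vertices within distance $2$ of $v$, so its dependency degree is $O(\Delta^3)$; thus the crux is to show $\pr{B_v}$ is at most something like $\Delta^{-c}$ for a large constant $c$.

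The key estimate is the bound on $\pr{B_v}$ for a vertex $v$ with $d := \deg(v) \ge 1$. Condition on the colours of all vertices outside $N(v)$; this only restricts which colours each neighbour may take (to avoid conflicts with its own neighbours outside $N(v)$), leaving each $u \in N(v)$ a uniform choice among a set of at least $k - (\Delta - 1) = t + 1$ admissible colours, independently across $N(v)$ (neighbours of $v$ that are adjacent to each other are handled by noting we only need a lower bound on entropy — alternatively process an independent set in $G[N(v)]$ of size at least $d/\Delta \cdot$something, but cleaner is: reveal colours of $N(v)$ one vertex at a time, and at each step the chosen colour has at least $t+1 - d \ge$ ... — actually the simplest correct route is below). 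Here is the clean argument: expose the colours of the vertices of $N(v)$ sequentially in an arbitrary order $u_1, \dots, u_d$; when we expose $u_i$, it has at least $t+1$ colours not conflicting with its exterior neighbours, and among those, the event "the multiset $\sigma(\{u_1,\dots,u_i\})$ has all colour classes of even size" can hold for at most one value of $\sigma(u_i)$ if $i$ is odd is false in general — so instead use the standard trick: the parity vector of colour-counts in $N(v)$, viewed as an element of $\mathbb{F}_2^k$, must be $\mathbf 0$; revealing $\sigma(u_d)$ last, conditioned on $\sigma(u_1),\dots,\sigma(u_{d-1})$, the value of $\sigma(u_d)$ is forced to a single colour for the parity vector to vanish, and it takes that value with probability at most $1/(t+1)$. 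Hence $\pr{B_v} \le 1/(t+1)$. This is far too weak; to get $\Delta^{-c}$ we iterate: partition $N(v)$ into $\lfloor t/2 \rfloor$ pairs (plus leftovers) and observe that for the parity vector to vanish, for each pair we may reveal one endpoint freely and the other is forced, giving $\pr{B_v} \le (t+1)^{-\lfloor t/2\rfloor}$ when $d \ge t$ — wait, that over-counts forced coordinates.

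Let me restate the correct estimate cleanly, as this is the main obstacle. Fix an independent-in-$\mathbb{F}_2$ argument: choose a matching $M$ in the *complete* graph on $N(v)$ of size $m = \lfloor d/2 \rfloor$, and for each matched pair $\{u,u'\}$ reveal $\sigma(u)$ first (arbitrary) and then note that there are many values of $\sigma(u')$ making the contribution of this pair to the parity vector non-forced; more precisely, after revealing everything except one endpoint per pair, the remaining $m$ choices must jointly produce a specific target in $\mathbb{F}_2^k$, and since distinct single-vertex recolourings change the parity vector in linearly independent ways when the revealed colours are generic, the probability is at most $\prod (\text{something})$. Rather than belabour this, the honest plan is: show $\pr{B_v} \le 2^{-\Omega(\min(d,t))} \le 2^{-\Omega(t)} + [d < t]$, and handle small-degree vertices ($d < t = O(\ln \Delta)$) separately, since there are few colours to worry about — in fact a vertex of degree $d$ needs only that its neighbourhood, of size $d$, is not "all-even", and one can afford to first properly colour $G$ with $\Delta+1$ colours by greedy/Brooks and then argue locally. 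With $\pr{B_v} \le 2^{-t/3} \le \Delta^{-4}$ (by the choice of $t$, since $2^{-t/3}$ beats $e^{-(4/3)\ln\Delta}$-type bounds after absorbing the $\ln\ln\Delta$ term into the constant) and $\pr{A_{uv}} \le 1/k$, the Local Lemma condition $e p (D+1) \le 1$ is satisfied with $p = \Delta^{-4}$ and $D = O(\Delta^3)$, so a good colouring exists. The main obstacle is thus entirely in the parity estimate for $\pr{B_v}$: getting an exponential-in-$t$ bound rather than the naive $1/(t+1)$, which requires exploiting that one can force many independent $\mathbb{F}_2$-coordinates, and then checking that the surviving slack $t+1 \ge$ (number of forbidden colours per vertex) is exactly what the Local Lemma needs — this is where the value $4(\ln\Delta + \ln\ln\Delta + 3)$ comes from.
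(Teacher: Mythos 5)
You have correctly identified the shape of the problem (Local Lemma plus a parity estimate, with low-degree vertices needing separate treatment), but the two places you flag as "the main obstacle" are exactly where the proof lives, and neither is actually carried out. First, the parity estimate. Your sequential-reveal argument gives only $1/(t+1)$, and your pairing fix is, as you note yourself, unsound; the assertion $\pr{B_v}\le 2^{-\Omega(\min(d,t))}$ is then simply stated, not proved. Worse, the setting in which you would need it is not a product measure: you condition on the colours outside $N(v)$ and claim each neighbour is "a uniform choice among at least $t+1$ admissible colours, independently across $N(v)$". That is false for a uniformly random \emph{proper} colouring (and false again once you condition on other bad events not occurring, which the Local Lemma forces you to control). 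The paper's way around this is the pair Lemma~\ref{lemma:be}/Lemma~\ref{lem:badevent}: one resamples $m=\Theta(\ln\Delta)$ vertices of the neighbourhood one at a time inside the constrained space of colourings, observes that the number of odd colours performs a walk that decreases with probability at most (current value)$/\tau$, and uses the fact that the resampled colourings are identically distributed to convert this into a bound $\sqrt2\,(2m/(e\tau))^{m/2}$ on the conditional probability, valid for the lopsided LLL (Lemma~\ref{lemma:lll}). Some such mechanism for non-product conditional bounds is indispensable and is absent from your sketch.

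Second, the low-degree vertices. With your one-shot uniform colouring, a vertex of degree $2$ has $\pr{B_v}\approx 1/k=\Theta(1/\Delta)$ while its dependency degree is $\Omega(\Delta^2)$, so no choice of constants rescues the LLL; "colour greedily by Brooks and argue locally" does not address the real difficulty, namely a low-degree vertex all of whose neighbours have high degree: after the random phase there is no uncoloured neighbour left through which to give it an odd colour. The paper resolves this by a degree split at $k/2$: only the high-degree part is coloured randomly, and the random colouring is drawn uniformly from \emph{admissible} colourings (those already providing an odd colour to every low-degree vertex whose whole neighbourhood is high-degree); bad events are posed only for vertices in $V^{++}$, whose degree $\ge k/2\ge m$ guarantees enough neighbours to resample (Lemma~\ref{lemma:admissible}). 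The remaining low-degree vertices are then coloured greedily, each neighbour forbidding at most its own colour and its witness colour, which both keeps the colouring proper and preserves/creates odd colours; this needs degree $<k/2$ to leave a free colour. Your proposal contains neither the admissibility conditioning nor the witness-avoiding completion, so as written it does not yield the theorem; these are genuine missing ideas, not routine details.
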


For small values of $\Delta$, we provide another bound on $\odd(G)$ that is derived from a relatively simple (deterministic) colouring procedure.

\begin{thm}\label{thm:smalldegree}
    For every graph $G$ of maximum degree $\Delta$,
    \[\odd(G)\le\floor{\frac{3\Delta}{2}}+2.\]
\end{thm}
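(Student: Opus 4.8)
The plan is to start from an arbitrary proper colouring of $G$ with colours from $[\Delta+1]$ (which exists since $\chi(G)\le\Delta+1$), keep a disjoint reserve palette $P$ of $\floor{\Delta/2}+1$ new colours, and then repair the colouring into an odd colouring by recolouring, for each offending vertex, a single carefully chosen neighbour with a colour of $P$. The budget works out: $(\Delta+1)+\pth{\floor{\Delta/2}+1}=\floor{3\Delta/2}+2$, so it suffices to carry out such a repair without ever using a colour outside $[\Delta+1]\cup P$.

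First I would isolate the two structural facts that make the reserve palette small enough. Call $v$ \emph{bad} (for the current colouring) if $N(v)$ has no odd colour; then every colour occurs an even number of times in $N(v)$, so $\deg(v)$ is even — in particular every vertex of odd degree is automatically fine — and the number of colours appearing in $N(v)$ is at most $\deg(v)/2\le\floor{\Delta/2}$. Consequently each bad vertex $v$ has a neighbour $u$ whose colour $a$ occurs at least twice in $N(v)$, and recolouring $u$ to a new colour $m\neq a$ makes $a$ occur an odd number of times in $N(v)$; if moreover $u$ is the only neighbour of $v$ sent to $m$, then $m$ itself occurs exactly once in $N(v)$, so $v$ becomes (and stays) good. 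The only way such a recolouring can \emph{hurt} another vertex is this: for a good neighbour $x$ of $u$ with odd-colour set $O_x\neq\varnothing$, recolouring $u$ from $a$ to $m$ replaces $O_x$ by $O_x\,\triangle\,\{a\}\,\triangle\,\{m\}$, which is empty \emph{only} when $O_x=\{a,m\}$. So danger arises exactly when several neighbours of a good vertex are recoloured and the new colours on them collide in a bad configuration.

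The heart of the proof, and the step I expect to be the main obstacle, is therefore the \emph{selection}: choosing for every bad vertex $v$ a neighbour $f(v)$ and a colour $\varphi(f(v))\in P$ so that simultaneously (i) the result is proper, i.e.\ adjacent recoloured vertices get distinct colours of $P$; (ii) every bad vertex acquires an odd colour in its neighbourhood; and (iii) no previously good vertex loses all of its odd colours. Because $P$ is disjoint from $[\Delta+1]$, a colour of $P$ appearing exactly once in a neighbourhood is automatically odd there, which is what one exploits for (ii) and is also what neutralises most of the danger in (iii); the remaining bad collisions are the rare event described above, and these have to be avoided. I would control this either by processing the bad vertices in a carefully chosen order and, at each step, picking $f(v)$ and its new colour so as to dodge the at most $\Delta$ colours already present around the recoloured vertex together with the finitely many "dangerous'' values — here $|P|=\floor{\Delta/2}+1$ is precisely what remains — or by setting up an auxiliary graph on the candidate recoloured vertices whose relevant degree parameter is bounded using the "$\le\floor{\Delta/2}$ colours per bad neighbourhood'' fact and then applying a greedy (list-)colouring argument. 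Either way, the work lies in verifying that these local choices never run out of room and never cascade into an unbounded chain of newly created bad vertices; once a proper colouring of $G$ with palette $[\Delta+1]\cup P$ is produced in which every non-isolated vertex has an odd colour in its neighbourhood, the bound $\odd(G)\le\floor{3\Delta/2}+2$ follows. (As a sanity check on the size of $P$: since $\floor{\Delta/2}+1=\ceil{(\Delta+1)/2}$, one can also think of $P$ as indexing the classes obtained by pairing up the colours of a proper edge-colouring of $G$, each such class being a subgraph of maximum degree $2$ along which the repairs can be organised.)
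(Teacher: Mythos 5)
Your proposal sets up a plausible repair framework (proper $(\Delta+1)$-colouring plus a reserve palette $P$ of $\floor{\Delta/2}+1$ new colours) and the local observations are correct: a bad vertex has even degree, at most $\floor{\Delta/2}$ colours in its neighbourhood, hence a repeated colour $a$, and recolouring one $a$-coloured neighbour with a fresh colour restores an odd colour; a single recolouring of $u$ from $a$ to $m$ kills the odd-colour set of a good neighbour $x$ only when $O_x=\{a,m\}$. But the proof stops exactly where the difficulty begins. The selection step — choosing $f(v)$ and $\varphi(f(v))\in P$ so that the result is proper, every bad vertex is repaired, and no good vertex is broken — is only described as something you ``would control either by \dots or by \dots''; neither route is carried out, and the counting you gesture at does not obviously close. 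When you recolour a vertex $u$, properness forbids the reserve colours on its recoloured neighbours and safety forbids, for each good neighbour $x$ of $u$, the reserve colours that would annihilate $O_x$; since $u$ can have up to $\Delta$ neighbours, this can a priori produce up to $\Delta$ forbidden values against a palette of only $\floor{\Delta/2}+1$, so the claim that ``$|P|=\floor{\Delta/2}+1$ is precisely what remains'' is unsubstantiated. Moreover, repairs can turn previously good vertices bad, and those would need repairs of their own; you name this cascade but give no mechanism (ordering, potential function, or auxiliary-graph degree bound) that terminates it within the budget. As it stands the argument is a programme, not a proof.

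For comparison, the paper avoids all of this by colouring greedily in an arbitrary vertex order with $k=\floor{3\Delta/2}+2$ colours, maintaining the invariant that the partial colouring is an odd colouring of the coloured subgraph \emph{and} that every vertex adjacent to a coloured vertex already has an odd colour among its coloured neighbours. Each coloured neighbour forbids at most its colour plus (if it is critical) one witness colour, and when all $k$ colours are forbidden a counting argument produces a critical neighbour $y$ whose colour is forbidden only by $y$; the new vertex steals $\sigma(y)$ and $y$ is immediately recoloured, with a second count showing $y$ has at most $k-1$ forbidden colours. This single local exchange, protected by the invariant, is what replaces your global selection problem and rules out cascades; if you want to salvage your approach, you need an analogous invariant or ordering that bounds, at the moment a vertex is recoloured, the number of already-committed constraints by roughly $\floor{\Delta/2}$ rather than $\Delta$.
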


\subsection{General hypergraphs}
The proofs of our results stated in Theorem~\ref{thm:main} and Theorem~\ref{thm:smalldegree} rely highly on the structure of neighbourhood hypergraphs. In a more general setting, we could wonder how the odd colouring problem behaves on any graph-hypergraph pair $(G,\cH)$. We were able to extend Theorem~\ref{thm:main} to that more general setting, at the cost of requiring a lower bound on the minimum hyper-edge size $\epsilon(\cH)$ in $\cH$. 

\begin{thm}
\label{thm:hypergraph-delta}
    There exists a universal constant $C$ such that, for every graph-hypergraph pair $(G,\cH)$, if $\epsilon(\cH) \ge C \log \Delta(\cH)$ then there is an odd $k$-colouring of $(G,\cH)$, where
    \[ k \le \Delta(G) + C\log \Delta(\cH). \]
\end{thm}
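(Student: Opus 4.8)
The plan is to carry the probabilistic argument of Theorem~\ref{thm:main} over to the hypergraph setting, the role of the hypothesis $\epsilon(\cH)\ge C\log\Delta(\cH)$ being that it makes \emph{every} hyper-edge large, which is exactly the situation a randomised colouring copes with well; in particular the delicate treatment that small neighbourhoods require in Theorem~\ref{thm:main} is no longer needed. Fix a small constant $C'$ and put $t:=\lceil C'\log\Delta(\cH)\rceil$. Starting from a proper colouring $\phi\colon V(G)\to A$ of $G$ with $|A|=\chi(G)\le\Delta(G)+1$ and a disjoint palette $B$ of $t$ extra colours (so that $|A|+|B|\le\Delta(G)+C\log\Delta(\cH)$), I would form a random colouring $\sigma$ by \emph{activating} each vertex independently with probability $\tfrac12$ and recolouring each activated vertex with a uniformly random colour of $B$. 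Two adjacent activated vertices receiving the same colour of $B$ would break properness; this is corrected exactly as in the proof of Theorem~\ref{thm:main}, and from here on I focus only on the oddness requirement.

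The key estimate is that a fixed hyper-edge $e$ fails to receive an odd colour only with probability $\Delta(\cH)^{-\Omega(C)}$. Since $A$ and $B$ are disjoint, a colour of $B$ is odd on $e$ as soon as it occurs an odd number of times among the activated vertices of $e$. Conditioning on there being exactly $m$ activated vertices in $e$ --- which then carry i.i.d.\ uniform colours of $B$ --- and writing $N_b$ for the number of them coloured $b$, a Fourier expansion over $(\ZZ/2\ZZ)^t$ of $\prod_{b\in B}\tfrac{1+(-1)^{N_b}}{2}$ gives
\[
\pr{N_b\text{ is even for every }b\in B}=\frac1{2^{t}}\sum_{j=0}^{t}\binom tj\pth{1-\tfrac{2j}{t}}^{m}\le 2^{2-t}\exp\!\pth{t\,e^{-2m/t}},
\]
which is $2^{-\Omega(t)}$ once $m\ge t$. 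As the number of activated vertices of $e$ is binomial with mean $|e|/2\ge\epsilon(\cH)/2$, exceeding $t$ by a large constant factor when $C'\ll C$, a Chernoff bound absorbs the event $m<t$, and altogether $\pr{e\text{ has no odd colour}}\le 2^{-\Omega(t)}=\Delta(\cH)^{-\Omega(C)}$.

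The genuinely new difficulty, absent from Theorem~\ref{thm:main}, is that a hyper-edge may be arbitrarily large: then the bad event ``$e$ has no odd colour'' depends on the choices made at every vertex of $e$ and is correlated with far too many other bad events for the Lovász Local Lemma to apply. I would get around this by fixing, for each hyper-edge $e$, a \emph{witness set} $W_e\subseteq e$ of size $3t$ (legitimate since $|e|\ge\epsilon(\cH)\ge 3t$), rerunning the estimate above on $W_e$ in place of $e$ so that the probability that no colour of $B$ is odd on $W_e$ is still $\Delta(\cH)^{-\Omega(C)}$, and arranging the recolouring so that ``some colour of $B$ is odd on $W_e$'' already forces an odd colour on all of $e$; establishing this last implication is where the bulk of the technical work lies. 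Once that is done, each bad event depends on only $O(t)$ vertices and is thus mutually independent of all but $\mathrm{poly}(\Delta(G),\Delta(\cH))$ of the others, so, since $\Delta(\cH)^{-\Omega(C)}\cdot\mathrm{poly}(\Delta(G),\Delta(\cH))<1$ once $C$ is a large enough universal constant, the Lovász Local Lemma produces activations and recolourings avoiding every bad event. The resulting colouring, after the properness correction, is an odd $k$-colouring of $(G,\cH)$ with $k\le\Delta(G)+C\log\Delta(\cH)$.
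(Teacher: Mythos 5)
Your plan diverges from the paper's route (the paper deduces this theorem from Theorem~\ref{thm:chi-bound} with $S=V(G)$: a uniformly random proper $k$-colouring, the resampling estimate of Lemma~\ref{lem:badevent}/Lemma~\ref{lemma:be} conditioned on the colouring outside a small set $M(e)\subseteq e$, and the lopsided LLL with $\Gamma(e)=\{e':e'\cap M(e)\neq\emptyset\}$), and as written it has two genuine gaps. First, properness. After activating vertices with probability $\tfrac12$ and recolouring them uniformly from a palette $B$ of size $t=O(\log\Delta(\cH))$, an activated vertex of degree close to $\Delta(G)$ has about $\Delta(G)/(2t)$ conflicting neighbours in expectation, so when $\Delta(G)\gg\log\Delta(\cH)$ (the two parameters are unrelated) almost every activated vertex is in conflict. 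There is nothing in the proof of Theorem~\ref{thm:main} that "corrects" such conflicts --- that proof samples uniformly from admissible (hence already proper) colourings and never repairs properness --- and the obvious fixes fail: adding monochromatic-edge events to the LLL gives bad events of probability $\Theta(1/t)$ against dependency degree $\Theta(\Delta(G))$, and locally deactivating conflicting vertices reverts almost all activated vertices, destroying the parity distribution your Fourier estimate relies on.

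Second, the step you yourself flag as "the bulk of the technical work" is not a technicality but the crux, and the implication you want is false as stated: a colour $b\in B$ with an odd number of occurrences among the activated vertices of $W_e$ can perfectly well occur again among activated vertices of $e\setminus W_e$ and end up with even multiplicity on $e$, so "some colour of $B$ is odd on $W_e$" does not force an odd colour on $e$, and no mechanism in your recolouring arranges it. This is exactly the large-edge difficulty the paper's argument is built to avoid: in Lemma~\ref{lem:badevent} one conditions on the realisation of the colouring outside $M(e)$, which freezes the parity contribution of $e\setminus M(e)$, and then tracks the number of odd colours of the \emph{whole} edge $e$ as the $m=O(\log\Delta(\cH))$ vertices of $M(e)$ are resampled one at a time (the biased random walk of Lemma~\ref{lemma:be}, using that every vertex always has at least $\eta$ available colours in a proper $(\Delta(G)+\eta)$-colouring); the lopsided form of Lemma~\ref{lemma:lll} then only needs the dependency set $\{e':e'\cap M(e)\neq\emptyset\}$, of size at most $m\Delta(\cH)$, even though $e$ itself may be arbitrarily large. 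Unless you either prove a correct substitute for the witness-set implication or switch to a conditional estimate of this type (and simultaneously repair the properness issue, e.g.\ by sampling uniformly from proper colourings with a full palette rather than perturbing a fixed colouring with a tiny palette), the argument does not go through.
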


\noindent
With that extra condition on $\epsilon(\cH)$, we can actually derive an upper bound on $\odd(G,\cH)$ that mainly depends on $\chi(G)$ rather than $\Delta(G)$. Moreover, we show with a construction that this bound is tight up to the precise value of the constant $C$.

\begin{thm}
\label{thm:hypergraph-chi-mult}
    There exists a universal constant $C$ such that, for every graph-hypergraph pair $(G,\cH)$, if $\epsilon(\cH) \ge C \log \Delta(\cH)$ then there is an odd $k$-colouring of $(G,\cH)$, where
    \[ k \le \chi(G)\cdot C \log \Delta(\cH). \]
\end{thm}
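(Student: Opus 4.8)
The plan is to combine a proper colouring of $G$ with a random perturbation that fixes the odd-colour constraint on every hyper-edge, in the spirit of the proof of Theorem~\ref{thm:hypergraph-delta} but with the colour budget driven by $\chi(G)$ rather than $\Delta(G)$. Start from a proper $\chi := \chi(G)$-colouring $\varphi \colon V(G) \to [\chi]$. Then take $t$ disjoint copies of this palette, i.e. work with the colour set $[\chi] \times [t]$ for a suitable $t = \Theta(\log \Delta(\cH))$, and assign to each vertex $v$ the colour $(\varphi(v), \tau(v))$ where $\tau \colon V(G) \to [t]$ is chosen uniformly at random and independently. Since two adjacent vertices already differ in the first coordinate, any such $\tau$ yields a proper colouring of $G$ using $k := \chi \cdot t$ colours, so the only thing to control is the odd-colour condition on the hyper-edges of $\cH$.

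Next I would analyse a single hyper-edge $e \in E(\cH)$. Write $e$ as the disjoint union of its colour classes under $\varphi$: $e = \bigsqcup_{c \in [\chi]} e_c$ where $e_c = e \cap \varphi^{-1}(c)$, and let $a_c := |e_c|$, so $\sum_c a_c = |e| \ge \epsilon(\cH) \ge C\log\Delta(\cH)$. For a fixed $c$ with $a_c \ge 1$, the restriction $\tau|_{e_c}$ is a uniform random function $e_c \to [t]$, and $e$ fails to receive the colour $(c,j)$ an odd number of times precisely when $|\{v \in e_c : \tau(v) = j\}|$ is even for all $j \in [t]$. The key sub-claim is that there is an absolute constant $p_0 < 1$ such that, whenever $a_c \ge 1$, the probability that all $t$ counts $|\tau|_{e_c}^{-1}(j)|$ are even is at most $p_0^{\,t}$ — indeed, conditioning on the multiset of values used, the parity vector $(|\tau|_{e_c}^{-1}(j)| \bmod 2)_{j\in[t]}$ is, up to the constraint that its weight has the same parity as $a_c$, close to uniform on a subcube, and in any case one shows a constant-factor-per-coordinate decay; alternatively, note that the generating-function / Fourier identity $\Pr[\text{all even}] = 2^{-t}\sum_{S \subseteq [t]} (1 - 2|S|/t)^{a_c}$ lets us bound this by $2^{-t} + (1-2/t)^{a_c}\cdot$(something), which is $\le p_0^t$ once $a_c$ or $t$ is large enough; and the small-$a_c$ cases ($a_c = 1$ gives probability $0$, etc.) are handled separately. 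Granting this, the probability that $e$ has \emph{no} odd colour is at most $\prod_{c : a_c \ge 1} p_0^{\,t} = p_0^{\,t \cdot r}$ where $r$ is the number of colours of $\varphi$ appearing in $e$; but actually it suffices to use a single class: pick any $c^\ast$ with $a_{c^\ast} \ge 1$ and bound the failure probability for $e$ by $\Pr[\text{all counts in } e_{c^\ast} \text{ even}]$. The subtlety is that a single class $e_{c^\ast}$ may have $a_{c^\ast}$ as small as $1$; to get a genuinely exponentially small bound we instead observe that since $\sum_c a_c \ge C\log\Delta(\cH)$, either some class has $a_c \ge \sqrt{C\log\Delta(\cH)}$ (and then $p_0^{\,t}$ with $t = \Theta(\log\Delta(\cH))$ beats it, after also taking $t$ proportional to the log), or many classes have $a_c \ge 1$, and in the latter case the product over those classes of the per-class parity-failure probabilities — which are independent across classes because the $\tau(v)$ are independent — is at most $q^{\,r}$ for a constant $q<1$ with $r \ge \Omega(\log\Delta(\cH))$.

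Then I would close with a union bound: the number of hyper-edges of $\cH$ is at most $|V(\cH)| \cdot \Delta(\cH) \le n \Delta(\cH)$, but more usefully each hyper-edge is counted when we range over its vertices, so $|E(\cH)| \le n\Delta(\cH)$; choosing $C$ (hence $t = \Theta(C\log\Delta(\cH))$) large enough makes each hyper-edge's failure probability at most, say, $1/(2n\Delta(\cH))$ — here we also need the trivial bound $n \le$ something polynomial in $\Delta(\cH)$, OR, better, we should union-bound only over hyper-edges, and use that the relevant failure probability $p_0^{\,t}$ or $q^{\,\Omega(\log \Delta(\cH))}$ is at most $\Delta(\cH)^{-2}$ once $C$ is large, while noting each vertex lies in at most $\Delta(\cH)$ hyper-edges so a Lovász-Local-Lemma argument (each hyper-edge event depends only on the $\tau$-values of its own vertices, hence on at most $\epsilon(\cH)\Delta(\cH) \le \Delta(\cH)^2$ other events) also applies cleanly and removes any dependence on $n$. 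Either way one concludes that with positive probability $\tau$ makes $(\varphi,\tau)$ an odd colouring of $(G,\cH)$, giving an odd $k$-colouring with $k = \chi(G)\cdot t \le \chi(G)\cdot C\log\Delta(\cH)$, as desired.

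The main obstacle is the per-class parity estimate, i.e.\ showing that throwing $a \ge 1$ balls uniformly into $t$ bins leaves \emph{every} bin with an even count with probability at most $p_0^{\,t}$ for an absolute $p_0 < 1$ — this is clean for $a$ large via the Fourier formula $\Pr[\text{all even}] = 2^{-t}\sum_{S\subseteq[t]}(1-2|S|/t)^{a}$, but for small $a$ (comparable to or smaller than $t$) the naive bound degrades, so the real work is to argue that it is enough to aggregate over the $\varphi$-colour classes: since their total size is $\ge C\log\Delta(\cH)$, the \emph{product} of the per-class failure probabilities — independent across classes — is exponentially small in $C\log\Delta(\cH)$, and this is where the hypothesis $\epsilon(\cH) \ge C\log\Delta(\cH)$ is used in an essential way. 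I expect the bookkeeping that balances "few large classes" against "many small classes" to be the fiddly but routine part, and the independence-across-classes observation to be the conceptual crux that makes the $\chi(G)$-dependence (rather than $\Delta(G)$) possible.
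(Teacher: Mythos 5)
Your random colouring is exactly the one the paper uses (Theorem~\ref{thm:chi-bound2} with $S=V(G)$): a fixed optimal proper colouring of $G$ crossed with an independent uniform ``shade'' from a palette of size $\Theta(\log\Delta(\cH))$. The gap is in how you handle dependencies between the bad events. Your Lov\'asz Local Lemma step asserts that each event depends on at most $\epsilon(\cH)\Delta(\cH)\le\Delta(\cH)^2$ others, but the event for a hyper-edge $e$ is determined by the shades of \emph{all} vertices of $e$, so the relevant dependency degree is $|e|\cdot\Delta(\cH)$, and $|e|$ can be arbitrarily large ($\epsilon(\cH)$ is only the \emph{minimum} edge size; there is also no reason why $\epsilon(\cH)\le\Delta(\cH)$). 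This is not a technicality one can absorb into constants: for an edge lying in a single $\varphi$-class, the failure probability is about $2^{1-t}$ \emph{independently of its size}, while its dependency degree grows without bound, so neither the symmetric nor the asymmetric LLL applies to the full-edge events; and your union-bound fallback needs a bound on the number of edges in terms of $\Delta(\cH)$ alone, which is not available. The paper's fix is to pick, for each edge $e$, a subset $M(e)\subseteq e$ of only $m=\Theta(\log\Delta(\cH))$ vertices, to prove a bound on $\pr{B(e)\mid \restrict{\bsig}{V\setminus M(e)}=\sigma_0}$ that is uniform over \emph{all} realisations $\sigma_0$ outside $M(e)$ (this is Lemma~\ref{lem:badevent}, proved by a resampling/random-walk argument), and then to apply the \emph{lopsided} LLL with $\Gamma(e)=\{e':e'\cap M(e)\neq\emptyset\}$, of size at most $m\Delta(\cH)$. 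If you want to keep your Fourier computation, you must redo it in this conditional form: conditioning on the shades outside $M(e)$ turns ``all bin counts even'' into ``bin-count parities match a prescribed pattern'' on the classes of $M(e)$ only, and you must show this has probability $O\bigl(1/(m\Delta(\cH))\bigr)$ uniformly in the pattern and in $\sigma_0$ — none of which is in your write-up.

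Two quantitative claims in your per-edge estimate are also wrong as stated, though fixable. The assertion that throwing $a_c\ge1$ balls into $t$ bins leaves all bins even with probability at most $p_0^{\,t}$ is false for small classes: $a_c=2$ gives exactly $1/t$, which is far larger than $p_0^{\,t}$. Likewise the dichotomy ``either some class has size $\ge\sqrt{C\log\Delta(\cH)}$ (then $p_0^{\,t}$ beats it) or there are $\Omega(\log\Delta(\cH))$ nonempty classes'' does not hold: a class of size $o(t)$ only gives a bound like $1/t$, not $p_0^{\,t}$, and if every class has size below $\sqrt{C\log\Delta(\cH)}$ you only get $\Omega(\sqrt{\log\Delta(\cH)})$ classes. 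One can still salvage a per-edge bound of the form $\Delta(\cH)^{-K}$ by combining, for each class, the two bounds $f(a_c)\le 1/t$ and $f(a_c)\le 2\bigl((1+e^{-2a_c/t})/2\bigr)^{t}$ and balancing them (e.g.\ via their geometric mean) against $\sum_c a_c\ge C\log\Delta(\cH)$, so this part is indeed ``fiddly but routine''; but without the conditional version of the estimate and the lopsided LLL (or an equivalent device), the argument as proposed does not close.
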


\noindent
When $\epsilon(\cH)$ gets closer to $\Delta(G)$, we show that the difference $\odd(G,\cH)-\chi(G)$ gets relatively small.

\begin{thm}
\label{thm:hypergraph-chi}
    There exists a universal constant $C$ such that, for every graph-hypergraph pair $(G,\cH)$, there is an odd $k$-colouring of $(G,\cH)$, where
    \[ k \le \chi(G) + C\, \frac{\Delta(G)\log \Delta(\cH)}{\epsilon(\cH)}. \]
\end{thm}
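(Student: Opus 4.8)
The plan is to keep an optimal proper colouring on almost every vertex of $G$ and to fix the oddness requirement of $\cH$ by a small amount of random recolouring organised in rounds, each round using its own fresh block of colours. The point of using fresh colours round by round is that the colours of round $i$ never reappear afterwards, so that the moment a hyperedge contains an odd number of vertices recoloured in one and the same round, that round's colours witness an odd colour in the hyperedge \emph{for good}. This turns the inconvenient, non-monotone condition ``some colour occurs an odd number of times'' into the monotone condition ``some round recolours an odd part of the hyperedge'', which is what makes an iterated, local-lemma-based argument go through.

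Formally --- and assuming, as we may, that $\Delta(G)$ and $\Delta(\cH)$ are large, the remaining cases following from known bounds --- fix a proper colouring $\phi_0\colon V(G)\to[\chi(G)]$, choose a selection probability $q$ of order $\log\Delta(G)/\Delta(G)$, a block size $t$ of order $\log\Delta(G)$, and a number of rounds $R$ of order $\Delta(G)\log\Delta(\cH)/(\epsilon(\cH)\log\Delta(G))$, so that the total number of fresh colours $Rt$ is of order $\Delta(G)\log\Delta(\cH)/\epsilon(\cH)$; take pairwise disjoint blocks $F_1,\dots,F_R$ of $t$ new colours each, disjoint from $[\chi(G)]$. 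For $i=1,\dots,R$: among the vertices not recoloured in rounds $1,\dots,i-1$, put each independently into a set $S_i$ with probability $q$, and colour $S_i$ greedily with $F_i$. Finally colour every vertex that was never selected with its colour $\phi_0$. If the greedy step succeeds in every round --- which happens precisely when no vertex ever has at least $t$ selected neighbours in a round --- then the output colouring $\sigma$ is proper, because the blocks $[\chi(G)],F_1,\dots,F_R$ are disjoint and each $S_i$ is coloured properly; and if in some round $i\le R$ the set $S_i$ meets a hyperedge $e$ in an odd number of vertices, then some colour of $F_i$ has odd multiplicity in $e$, so $\sigma$ is odd on $e$.

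What is left is to run the Lovász Local Lemma on two families of bad events: $\mathrm{Deg}_{v,i}$, that vertex $v$ has at least $t$ selected neighbours in round $i$; and $\mathrm{Bad}_e$, that $\abs{S_i\cap e}$ is even for every $i\le R$. Since each neighbour of $v$ is selected with probability $q=\Theta(\log\Delta(G)/\Delta(G))$ and $t$ is a large enough constant times $\log\Delta(G)$, a Chernoff bound makes $\pr{\mathrm{Deg}_{v,i}}$ polynomially small in $\Delta(G)$, which easily dominates the $O(\Delta(G)^2+\Delta(G)\Delta(\cH))$ events that $\mathrm{Deg}_{v,i}$ depends on. For $\mathrm{Bad}_e$, write $U_i(e)$ for the vertices of $e$ uncoloured at the start of round $i$, so that $\abs{U_1(e)}=\abs{e}\ge\epsilon(\cH)$ and $\abs{U_i(e)}$ decreases by at most a $(1-q)$-factor per round; then $\abs{S_i\cap e}\sim\mathrm{Bin}(\abs{U_i(e)},q)$, hence is odd with probability $\Omega\!\big(\min\{1,q\abs{U_i(e)}\}\big)$, and summing the corresponding geometric series over the $R$ rounds yields $\pr{\mathrm{Bad}_e}\le\exp(-\Omega(qR\,\epsilon(\cH)))=\Delta(\cH)^{-\Omega(1)}$, with the implied constant as large as we please by tuning the constants in $q$, $t$, $R$. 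Since $\mathrm{Bad}_e$ depends only on the selections made at vertices of $N[e]$, it is mutually independent of all but $O(\abs{e}\,\Delta(G)\,\Delta(\cH))$ other events, and the asymmetric local lemma --- with a weight on $\mathrm{Bad}_e$ decaying geometrically in $\abs{e}$ to balance hyperedges of widely differing sizes --- produces a choice of the random bits avoiding every bad event, hence an odd $k$-colouring with $k=\chi(G)+Rt=\chi(G)+O(\Delta(G)\log\Delta(\cH)/\epsilon(\cH))$.

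\textbf{The main obstacle} is this last step. The dependency degree of $\mathrm{Bad}_e$ scales with $\abs{e}$, which is \emph{not} controlled by $\Delta(\cH)$, so the estimate on $\pr{\mathrm{Bad}_e}$ must be strong enough --- uniformly in $\abs{e}$ --- to beat it; getting this to work simultaneously for hyperedges of size $\epsilon(\cH)$ and for hyperedges as large as $\abs{V(G)}$ is exactly what forces the quantitative choices of $q$, $t$ and $R$, and is where the argument has to be most careful (the monotonicity gained from using a fresh block per round is essential here, since it lets a hyperedge be ``fixed'' once and for all rather than re-exposed by later recolourings). When these constraints cannot be met --- essentially when $\epsilon(\cH)$ is so small that $\Delta(G)\log\Delta(\cH)/\epsilon(\cH)\ge\Delta(\cH)$, so the target bound is at least $\Delta(\cH)$ anyway --- one simply appeals instead to the elementary inequality $\odd(G,\cH)\le\pcf(G,\cH)\le\Delta(\cH)+\delta^*(G)+1$.
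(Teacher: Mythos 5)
Your reduction of oddness to the monotone event ``some round meets $e$ in an odd number of vertices'' is a nice idea, but the quantitative heart of the argument is false, and it fails exactly in the regime where the theorem is strongest. Per round, $\abs{S_i\cap e}$ conditioned on the history is binomial with parameters $\abs{U_i(e)}$ and $q$, and its probability of being odd is $\tfrac{1}{2}\bigl(1-(1-2q)^{\abs{U_i(e)}}\bigr)$, which is indeed $\Omega(\min\{1,q\abs{U_i(e)}\})$ --- but it is \emph{never more than} $1/2$. Hence each round reduces $\pr{\mathrm{Bad}_e}$ by at most a factor $2$, so $\pr{\mathrm{Bad}_e}\ge 2^{-R}$ regardless of $\abs{e}$, and the claimed bound $\exp(-\Omega(qR\,\epsilon(\cH)))$ is simply wrong once $q\,\epsilon(\cH)\gtrsim 1$; the true bound is at best $\exp\pth{-\Omega(\min\{R,\,qR\,\epsilon(\cH)\})}$. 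With your parameters $q=\Theta(\log\Delta(G)/\Delta(G))$ and $R=\Theta\pth{\tfrac{\Delta(G)\log\Delta(\cH)}{\epsilon(\cH)\log\Delta(G)}}$, take the motivating case $\cH$ the neighbourhood hypergraph of a $\Delta$-regular graph, so $\epsilon(\cH)=\Delta(G)=\Delta(\cH)=\Delta$: then $R=\Theta(1)$, $\pr{\mathrm{Bad}_e}=\Omega(1)$, while the dependency degree is at least $\Delta$, so no form of the Local Lemma can apply. Repairing this forces $R=\Omega(\log(\Delta(G)\Delta(\cH)))$ (to push $2^{-R}$ below the dependency degree), and since the per-round greedy step needs $t\gtrsim q\Delta(G)+\log\Delta(G)/\log\log\Delta(G)$ colours for the events $\mathrm{Deg}_{v,i}$ to be LLL-small, the total $Rt$ of fresh colours is $\widetilde{\Omega}(\log\Delta(G)\log\Delta(\cH))$, which overshoots the target $O(\Delta(G)\log\Delta(\cH)/\epsilon(\cH))$ whenever $\epsilon(\cH)\gtrsim\Delta(G)/\log\Delta(G)$ --- e.g.\ it would only give $\chi(G)+O(\log^2\Delta)$ instead of $\chi(G)+O(\log\Delta)$ for regular graphs. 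This is a structural loss of the round-parity scheme, not a matter of tuning constants. Your proposed fallback does not cover this hole either: in that regime $\Delta(\cH)+\delta^*(G)+1$ is of order $\Delta(G)$, far above $\chi(G)+C\log\Delta(\cH)$.

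Two further remarks. First, your ``main obstacle'' cannot be resolved by a weighted asymmetric LLL: since $\pr{\mathrm{Bad}_e}\ge 2^{-R}$ independently of $\abs{e}$, no probability estimate decaying geometrically in $\abs{e}$ is available, so it cannot beat a dependency degree growing linearly in $\abs{e}$ once $\abs{e}\gg R$. The standard fix is to define the bad event on a fixed sub-edge of $e$ of bounded size, which caps the dependency degree at (that size)${}\times\Delta(\cH)$; this is what the paper does. Second, for comparison, the paper avoids the parity cap altogether: it first chooses (Lemma~\ref{lem:degree-dominating-set}, by Chernoff plus LLL) a random set $S$ meeting every hyperedge in $m=\Theta(\log\Delta(\cH))$ vertices with $\Delta(G[S])=O(\Delta(G)\,m/\epsilon(\cH))$, colours $G\setminus S$ optimally, and then takes a uniformly random proper extension to $S$ using $\chi(G\setminus S)+\Delta(G[S])+\Theta(\log\Delta(\cH))$ colours (Theorem~\ref{thm:chi-bound}). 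Oddness of each hyperedge is then established not through a per-round binomial parity (capped at $1/2$) but through the resampling martingale of Lemmas~\ref{lemma:be} and~\ref{lem:badevent}: resampling $m$ vertices of the edge, each with $\ge\eta$ available colours, makes the number of odd colours perform a walk with strong upward drift, giving failure probability $\sqrt{2}\,(2m/e\eta)^{m/2}$, small enough for the symmetric LLL with dependency degree $m\Delta(\cH)$. If you want a correct proof along your own lines, you would essentially have to reintroduce a mechanism of this drift type inside each round, at which point you are back to the paper's argument.
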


A direct consequence of Theorem~\ref{thm:hypergraph-chi} is that for quasi-regular graphs $G$ (that is, the ratio $\Delta(G)/\delta(G)$ is bounded by a uniform constant), the difference $\chi_o(G) - \chi(G)$ is small, namely $O(\log \Delta(G))$. This contrasts with the general case where that difference can be much larger: if $G$ is the $1$-subdivision of the complete graph on $\Delta+1 \ge 5$ vertices, then $\chi(G)=2$ while $\odd(G) = \Delta+1$. 

%\subsection{\texorpdfstring{$h$}{h}-odd colourings}
%Finally, we consider a generalisation of odd colourings to $h$-odd colourings, where we require that every vertex $v$ must have $\min \{\deg(v),h\}$ colours appearing an odd number of times on its neighbourhood.

%\todo[inline]{Decide what to say in this Subsection (or remove it).}

% On the other hand, the gap between odd chromatic number and chromatic number of a graph can be arbitrarily large. We denote the \emph{$1$-subdivision} of a graph $G$ be the graph obtained from $G$ by subdividing each edge exactly once. If $H$ is a $1$-subdivision of graph $G$, then the neighbours of any vertex of degree $2$ in $H$ receive different colour in any odd colouring, and so $\odd(H)\ge \chi(G)$. But $\chi(H)=2$ since $H$ is bipartite. In this paper, we prove that, for a quasi-regular graph $G$ where $\delta(G)$ is large enough, the odd chromatic number of $G$ is not too far from its chromatic number.

% \begin{thm}\label{thm:qreg}
% For every graph $G$ of maximum degree $\Delta$ and minimum degree $\delta\ge 12\ln\Delta+6$, we have
% \[ \odd(G) \le \chi(G) + O\pth{\frac{\Delta \ln \Delta}{\delta}}\]
% as $\Delta \to \infty$.
% \end{thm}

%For $v\in V(G)$, let $N_G(v)$ denote the set of all neighbours of $v$ in $G$, and $N_G[v]\coloneqq N_G(v)\cup\{v\}$ be the closed neighbourhood of $v$. For $d\in\mathbb{N}$, $N_G^d[v]$ denotes the set of all vertices that are at distance at most $d$ from $v$. 

\subsection{Organisation of the paper}

The paper is organised as follows.
In Section~\ref{sec:proba}, we provide probabilistic tools that we will rely on in our proofs.
In Section~\ref{sec:pcf}, we prove Theorem~\ref{thm:smalldegree}.
In Section~\ref{sec:main}, we prove Theorem~\ref{thm:main}. In Section~\ref{sec:hypergraph}, we analyse odd colourings for general hypergraphs with additional constraints on the edge cardinalities.
Finally, in Section~\ref{sec:general}, we extend our results from Section~\ref{sec:hypergraph} to $h$-odd colourings, a generalisation of odd colourings that consists in requiring that every hyper-edge $e$ contains at least $\min\{|e|,h\}$ odd colours, for a given integer $h\ge 1$.

\section{Probabilistic tools.}
\label{sec:proba}
\noindent The first probabilistic result that we need is the following lopsided version of the Symmetric Lov\'{a}sz Local Lemma (LLL for short) (see e.g. \cite{Ber19}).
\begin{lemma}[Lopsided Lov\'{a}sz Local Lemma]\label{lemma:lll}
Let $\mathcal{B} = \{B_1, \ldots, B_n\}$ be a finite set of random (bad) events, and let $d$ be a fixed integer. Suppose that, for every $i\in [n]$, there is a set $\Gamma(i)\subseteq [n]$ of size at most $d$ such that, for every $Z\subseteq [n]\setminus\Gamma(i)$,
\[\pr{B_i \; \middle| \; \bigcap_{j\in Z}\overline{B_j}}\le p.\]
%(A horizontal line over an event denotes its negation.)
If $epd\le 1$, then $\pr{\bigcap_{i\in [n]} \limits \overline{B_i}}>0$.
\end{lemma}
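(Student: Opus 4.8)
The plan is to run the classical inductive proof of the Lov\'{a}sz Local Lemma, in the version that consumes only a one-sided conditional bound on the bad events (which is exactly what the hypothesis supplies, so the ``lopsidedness'' costs nothing extra). Set $q:=ep$, so that $q\le 1/d$, and assume $d\ge 2$; the cases $d\le 1$ are immediate, since then $[n]\setminus\Gamma(i)\supseteq[n]\setminus\{i\}$ and the hypothesis applies verbatim to $Z=[n]\setminus\{i\}$. I will work with the standard convention that $i\in\Gamma(i)$ (equivalently, that $i$ itself is counted among the at most $d$ indices); this is what makes $epd\le1$, rather than $ep(d+1)\le1$, the right threshold. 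The core of the argument is the claim, to be proved by induction on $|S|$, that for every $i\in[n]$ and every $S\subseteq[n]\setminus\{i\}$ one has $\pr{\bigcap_{j\in S}\overline{B_j}}>0$ and $\pr{B_i\midbar\bigcap_{j\in S}\overline{B_j}}\le q$. Granting this, the lemma follows by the chain rule: ordering $[n]$ arbitrarily,
\[\pr{\bigcap_{i\in[n]}\overline{B_i}}=\prod_{i\in[n]}\pr{\overline{B_i}\midbar\bigcap_{j<i}\overline{B_j}}\ge(1-q)^{n}>0,\]
since $q<1$.

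For the induction, the base case $S=\emptyset$ is just $\pr{B_i}\le p\le q$. For the inductive step with $S\neq\emptyset$, I would split $S=S_1\cup S_2$ (disjointly) with $S_1:=S\cap\Gamma(i)$ and $S_2:=S\setminus\Gamma(i)$. If $S_1=\emptyset$ then $S\subseteq[n]\setminus\Gamma(i)$ and the hypothesis gives $\pr{B_i\midbar\bigcap_{j\in S}\overline{B_j}}\le p\le q$ at once. Otherwise, enumerate $S_1=\{j_1,\dots,j_r\}$ with $1\le r\le|\Gamma(i)|-1\le d-1$ (using $i\in\Gamma(i)$ and $i\notin S$), and write
\[\pr{B_i\midbar\bigcap_{j\in S}\overline{B_j}}=\frac{\pr{B_i\cap\bigcap_{\ell=1}^{r}\overline{B_{j_\ell}}\midbar\bigcap_{j\in S_2}\overline{B_j}}}{\pr{\bigcap_{\ell=1}^{r}\overline{B_{j_\ell}}\midbar\bigcap_{j\in S_2}\overline{B_j}}}\le\frac{p}{(1-q)^{r}}\le\frac{p}{(1-q)^{d-1}}.\]
Here the numerator is at most $\pr{B_i\midbar\bigcap_{j\in S_2}\overline{B_j}}\le p$ by the hypothesis (as $S_2\subseteq[n]\setminus\Gamma(i)$), while the denominator, expanded as the telescoping product $\prod_{\ell=1}^{r}\bigl(1-\pr{B_{j_\ell}\midbar\bigcap_{m<\ell}\overline{B_{j_m}}\cap\bigcap_{j\in S_2}\overline{B_j}}\bigr)$, is at least $(1-q)^{r}$ by applying the induction hypothesis to each of the index sets $\{j_m:m<\ell\}\cup S_2$, which have size at most $|S|-1$ and omit $j_\ell$. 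It then remains only to check the numerical inequality $p/(1-q)^{d-1}\le q=ep$, i.e. $(1-q)^{d-1}\ge 1/e$; this is where $epd\le1$ enters, via $(1-q)^{d-1}\ge(1-1/d)^{d-1}=\bigl(1+\tfrac{1}{d-1}\bigr)^{-(d-1)}>1/e$, using $(1+\tfrac1m)^m<e$ for every $m\ge1$.

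The part I expect to require the most care is not any single inequality but the bookkeeping that keeps the induction coherent: one must verify, along every conditioning chain, that each conditioning event appearing has positive probability (so the conditional probabilities are well defined) and that the induction hypothesis is invoked only on index sets strictly smaller than $S$ that do not contain the event being bounded. Both points are handled by carrying the positivity statement $\pr{\bigcap_{j\in S}\overline{B_j}}>0$ through the induction in tandem with the probability bound --- indeed, positivity of $\bigcap_{j\in S}\overline{B_j}$ follows from positivity of $\bigcap_{j\in S_2}\overline{B_j}$ (induction) together with the same telescoping lower bound $(1-q)^{r}>0$, which also justifies the displayed identity --- but it should be written out explicitly. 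Apart from this, the proof is the textbook LLL induction, with no new ingredient.
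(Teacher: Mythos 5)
The paper never proves this lemma --- it is quoted from the literature (``see e.g.\ \cite{Ber19}'') --- so there is no internal argument to compare yours against; judged on its own, your write-up is the standard inductive proof of the symmetric lopsided LLL and its steps are sound: the split $S=S_1\cup S_2$ with $S_2\subseteq[n]\setminus\Gamma(i)$ so the hypothesis bounds the numerator by $p$, the telescoped denominator bounded below by $(1-q)^{r}$ via the induction hypothesis on strictly smaller index sets not containing the event being bounded, the positivity of all conditioning events carried along in tandem, and the closing inequality $(1-1/d)^{d-1}>1/e$. (Your dismissal of $d\le 1$ via $Z=[n]\setminus\{i\}$ still presupposes that the conditioning event has positive probability; running the same chain-rule induction there is cleaner, but this is cosmetic.)

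The one substantive caveat is your imported convention that $i\in\Gamma(i)$. It is not part of the statement, and it is doing real work: it is precisely what gives $r\le|S_1|\le d-1$, and if $\Gamma(i)$ were allowed to have size $d$ with $i\notin\Gamma(i)$, the final step would require $(1-1/d)^{d}\ge 1/e$, which is false, so your induction would only deliver the threshold $ep(d+1)\le 1$; proving the literal statement in that generality needs heavier tools (Shearer-type criteria for lopsidependency), not this argument. So, strictly, you prove the lemma under a slightly stronger hypothesis. This is almost certainly the intended reading (for $Z\ni i$ the hypothesis is vacuous anyway), and it is the reading the paper actually uses: in Lemma~\ref{lemma:admissible} one has $v\in\Gamma(v)=N[M(v)]$, and in Theorems~\ref{thm:chi-bound}, \ref{thm:chi-bound2}, \ref{thm:rodd} and \ref{thm:chi-bound2-h} one has $e\in\Gamma(e)$, with $d$ taken at least $|\Gamma|$ in each case, so your version covers every application made of the lemma. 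But you should state the convention explicitly as part of the lemma (or note, as above, that all uses satisfy it) rather than introduce it silently as ``standard''.
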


Many random variables we analyse in this paper are highly concentrated around their mean. This is a consequence of Chernoff's bounds as stated hereafter.

\begin{lemma}[Chernoff's bounds]
    \label{lem:chernoff}
    Let $\bX_1, \ldots, \bX_n$ be i.i.d. $(0,1)$-valued random variables, and let $\bS_n \coloneqq \sum_{i=1}^n \bX_n$. Let us write $\mu\coloneqq \esp{\bS_n}$. Then
    \begin{enumerate}[label={\rm(\roman*)}]
        \item for every $0 \le \eta < \mu$, 
        \[\pr{\bS_n \le \mu - \eta} \le e^{-\frac{\eta^2}{2\mu}} \mbox{, and}\]
        \item for every $\eta > 0$, 
        \[ \pr{\bS_n \ge \mu + \eta} \le e^{-\frac{\eta^2}{2(\mu+\eta)}}.\]
        
    \end{enumerate}
\end{lemma}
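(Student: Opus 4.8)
The plan is to use the exponential-moment (Chernoff) method, handling the two tails by the same device applied to $\pm\bS_n$. Write $p \coloneqq \pr{\bX_1 = 1}$, so that $\mu = np$. For the upper tail, fix $t>0$ and apply Markov's inequality to the non-negative variable $e^{t\bS_n}$: since $\{\bS_n \ge \mu+\eta\}$ coincides with $\{e^{t\bS_n} \ge e^{t(\mu+\eta)}\}$,
\[
\pr{\bS_n \ge \mu+\eta} \le e^{-t(\mu+\eta)}\,\esp{e^{t\bS_n}} = e^{-t(\mu+\eta)}\pth{\esp{e^{t\bX_1}}}^{n},
\]
where the factorisation uses independence and identical distribution. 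The one-variable moment generating function is $\esp{e^{t\bX_1}} = 1 + p(e^t-1)$, and the elementary bound $1+u \le e^u$ gives $\esp{e^{t\bX_1}} \le e^{p(e^t-1)}$, whence $\pth{\esp{e^{t\bX_1}}}^n \le e^{\mu(e^t-1)}$. For the lower tail I would repeat the argument verbatim with $e^{-t\bS_n}$ in place of $e^{t\bS_n}$, which leads to $\pr{\bS_n \le \mu-\eta} \le e^{t(\mu-\eta)}e^{\mu(e^{-t}-1)}$.

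The second step is to optimise over $t$. Setting $x \coloneqq \eta/\mu \in [0,1)$, the upper-tail expression $e^{-t(\mu+\eta)+\mu(e^t-1)}$ is minimised at $t = \ln(1+x)>0$, giving the classical relative-entropy bound
\[
\pr{\bS_n \ge \mu+\eta} \le \exp\pth{-\mu\big[(1+x)\ln(1+x) - x\big]},
\]
while the lower-tail expression is minimised at $t = -\ln(1-x)>0$ (which is legitimate precisely because $\eta < \mu$), giving
\[
\pr{\bS_n \le \mu-\eta} \le \exp\pth{-\mu\big[x + (1-x)\ln(1-x)\big]}.
\]

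It remains to replace these sharp-but-unwieldy exponents by the Gaussian-type quantities in the statement, which is where the only genuine work lies. For the lower tail I would establish $x + (1-x)\ln(1-x) \ge x^2/2$ on $[0,1)$: letting $\psi$ denote the difference, one has $\psi(0)=0$ and $\psi'(x) = -\ln(1-x) - x = \sum_{k\ge 2} x^k/k \ge 0$, so $\psi \ge 0$; this turns the exponent into $-\mu x^2/2 = -\eta^2/(2\mu)$, proving (i). For the upper tail the analogous inequality $(1+x)\ln(1+x) - x \ge \frac{x^2}{2(1+x)}$ for $x \ge 0$ yields exponent at most $-\mu\cdot\frac{x^2}{2(1+x)} = -\frac{\eta^2}{2(\mu+\eta)}$, proving (ii). This last inequality is the main obstacle: unlike the lower-tail case it cannot be read off a power series, since the expansion of $(1+x)\ln(1+x)-x$ is alternating and must be compared against the likewise-alternating expansion of $\frac{x^2}{2(1+x)}$. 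Instead I would set $\phi(x) \coloneqq (1+x)\ln(1+x) - x - \frac{x^2}{2(1+x)}$ and argue by monotonicity: a direct computation gives $\phi(0)=\phi'(0)=0$ and, after simplification, $\phi''(x) = \frac{x(x+2)}{(1+x)^3} \ge 0$ for $x \ge 0$, so $\phi' \ge 0$ and hence $\phi \ge 0$. Finally, the degenerate cases $\mu = 0$ and $\eta = 0$ (and the limit $\eta \to \mu$ in the lower tail) are handled separately, as all the claimed bounds hold trivially there.
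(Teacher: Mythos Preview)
Your argument is the standard exponential-moment derivation and is correct. One cosmetic slip: you write ``Setting $x \coloneqq \eta/\mu \in [0,1)$'' before treating both tails, but for the upper tail $\eta$ is unrestricted and hence $x$ may exceed $1$; this does no damage, since your optimisation and the inequality $(1+x)\ln(1+x)-x \ge \tfrac{x^2}{2(1+x)}$ are both carried out (and valid) for all $x\ge 0$.

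As for comparison with the paper: there is nothing to compare. The paper states Chernoff's bounds as a standard probabilistic tool in Section~\ref{sec:proba} and does not supply a proof; it is simply quoted alongside the Lov\'asz Local Lemma as background. Your write-up therefore goes beyond what the paper does.
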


Finally, we will need to analyse a specific Markovian process as described in the following lemma. 
First we observe that, using the well-known Stirling bounds
\[\sqrt{2\pi n}\pth{\frac{n}{e}}^n e^{\frac{1}{12n+1}} \le n! \le \sqrt{2\pi n}\pth{\frac{n}{e}}^n e^{\frac{1}{12n}}\] 
for every integer $n\ge 1$, it is straightforward to derive that 
\begin{equation}
\label{eq:factorial}
\frac{n!}{\pth{\frac{n}{2}}!} \le \sqrt{2}\pth{\frac{2n}{e}}^{n/2}
\end{equation}
for every even integer $n \ge 2$. We will also use the classical upper bound $\binom{n}{k}\le \pth{\frac{ne}{k}}^k$.
%In order to prove it, we will rely on the fact that $\frac{n!}{k!} \le \pth{\frac{n+k+1}{2}}^{n-k}$ for every integers $n\ge k \ge 0$.
\begin{lemma}\label{lemma:be}
Let $\bX_1,\bX_2,\ldots$ be a sequence of binary random variables with values in $\{-1,1\}$, let $S_0\in\mathbb{N}$ be a non-negative integer, and let $\bS_i \coloneqq S_0+\sum_{j=1}^i \bX_j$ for every integer $i\ge 0$. If there exists a real number $\tau>0$ such that, for every $i\ge 0$, it holds that $\pr{\bX_i=-1 \midbar \bS_{i-1}=s}\le s/ \tau$, then 
\[\pr{\bS_n\le k}\le \sqrt{2} \, \binom{n}{k}\pth{\frac{2n-2k}{e\tau}}^{\frac{n-k}{2}},\]
for every integers $0\le k \le n$.
\end{lemma}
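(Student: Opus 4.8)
The plan is to estimate $\pr{\bS_n\le k}$ by a union bound over the possible locations of the down-steps $\{j:\bX_j=-1\}$. Since an up-step raises $\bS$ by one and a down-step lowers it by one, and since $\bS$ never drops below $0$ (when $\bS_{i-1}=0$ the hypothesis forces $\bX_i=1$), the event $\{\bS_n\le k\}$ decomposes according to the exact down-step set $A=\{a_1<\dots<a_m\}$, where $\bS_n=S_0+n-2m\le k$ and $m\le\lfloor(S_0+n)/2\rfloor$; fixing $A$ determines the whole sequence $(\bX_j)_{j=1}^n$, and in particular $\bS_{a_t-1}=S_0+a_t-2t+1$ for each $t$.

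By the chain rule, bounding the contribution of each up-step by $1$ and of each down-step $a_t$ by $\bS_{a_t-1}/\tau$ using the hypothesis, the probability of this particular sequence is at most $\prod_{t=1}^m \bS_{a_t-1}/\tau$. The key step is the estimate of the factors: there are $m-t$ down-steps after $a_t$, so $a_t\le n-(m-t)$, hence $\bS_{a_t-1}=S_0+a_t-2t+1\le S_0+n-m-t+1$; feeding in $\bS_n=S_0+n-2m\le k$ gives $\bS_{a_t-1}\le k+m+1-t$, so that $\prod_{t=1}^m \bS_{a_t-1}/\tau\le (k+m)!/(k!\,\tau^m)$. Summing over the $\binom{n}{m}$ choices of $A$ with $|A|=m$ and over $m\ge m_0:=\lceil(S_0+n-k)/2\rceil\ge\lceil(n-k)/2\rceil$ yields
\[\pr{\bS_n\le k}\le\sum_{m\ge m_0}\binom{n}{m}\,\frac{(k+m)!}{k!\,\tau^m}.\]

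It remains to bound this sum, which is the technical heart of the argument. When $n-k$ is even the term $m=(n-k)/2$ — the smallest one when $S_0=0$ — equals $\binom{n}{k}\,\frac{(n-k)!}{((n-k)/2)!\,\tau^{(n-k)/2}}$, which by the Stirling estimate~\eqref{eq:factorial} is at most $\sqrt2\,\binom{n}{k}\big(\frac{2(n-k)}{e\tau}\big)^{(n-k)/2}$, exactly the claimed bound. I would then show that this term carries the whole sum up to a bounded factor: the ratio of consecutive terms is $\frac{(n-m)(k+m+1)}{(m+1)\tau}$, so for $\tau$ above a quantity of the relevant order the tail is geometrically dominated by the leading term, while for smaller $\tau$ the right-hand side of the lemma is already $\ge 1$ (bound $\binom{n}{k}$ from below inside $\sqrt2\binom nk(2(n-k)/(e\tau))^{(n-k)/2}$) and the statement is trivially true; an odd $n-k$, or a positive $S_0$, is handled by the same dichotomy. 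I expect this last step to be the main obstacle: the single leading term already essentially matches the target, so one must carefully keep the contribution of all other terms within the narrow slack afforded by the $\sqrt2$ and by replacing $(n-k)!/((n-k)/2)!$ with $(2(n-k)/e)^{(n-k)/2}$, and in particular cover the band of moderate $\tau$ where neither the geometric-tail estimate nor the naive triviality check applies directly.
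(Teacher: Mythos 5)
Your set-up is fine as far as it goes: the chain-rule bound per sequence, the identity $\bS_{a_t-1}=S_0+a_t-2t+1$, and the estimate $\prod_t \bS_{a_t-1}/\tau\le (k+m)!/(k!\,\tau^m)$ are all correct. The genuine gap is the step you defer to the end: the union bound over the \emph{exact} down-step set forces you to sum over $m$, and the resulting quantity $\sum_{m\ge m_0}\binom{n}{m}\frac{(k+m)!}{k!\,\tau^m}$ is in general \emph{strictly larger} than the claimed bound, so no amount of careful bookkeeping can finish the argument from there. Indeed, the term $m=m_0=\frac{n-k}{2}$ already equals $\binom{n}{k}\frac{(n-k)!}{((n-k)/2)!\,\tau^{(n-k)/2}}$, and \eqref{eq:factorial} is tight up to a factor $1+O(1/(n-k))$, so the slack between this single term and the target is only an $O(1/(n-k))$ fraction of it; but the next term contributes a fraction $\frac{(n-m_0)(k+m_0+1)}{(m_0+1)\tau}$ of the first, which is a constant bounded away from $0$ whenever $\tau=\Theta(n)$ and $k\ge 2$ --- precisely the regime in which the lemma is later used (e.g.\ in \cref{lem:hypergraphbadevent}, where $k=h-1$ is large, several values of $m$ are admissible, and $\tau=\eta=O(n)$ up to constants). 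Concretely, with $S_0=0$, $n=4$, $k=2$, $\tau=20$, your sum is $4\cdot\frac{3}{20}+6\cdot\frac{24}{800}=0.78$, while the claimed bound is $\sqrt2\cdot 6\cdot\frac{4}{20e}\approx 0.62<1$; so neither the geometric-domination step nor the triviality fallback closes the argument. (Your $k=0$ case works only because non-negativity forces a single admissible $m$; the lemma must hold for all $0\le k\le n$.)

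The paper avoids this by never enumerating all down-steps. For an outcome with $\bS_n\le k$ it records only the positions of the \emph{last} $\frac{n-k}{2}$ down-steps; for the $j$-th last one the deterministic bound $\bS_{i_j-1}\le k+j$ holds (after it there remain $j-1$ down-steps), while the conditional factors of all earlier steps are simply bounded by $1$. Grouping outcomes by these $\frac{n-k}{2}$ positions gives a single count $\binom{n}{(n-k)/2}$ times $\prod_{j=1}^{(n-k)/2}\frac{k+j}{\tau}=\frac{((n+k)/2)!}{k!\,\tau^{(n-k)/2}}$, which equals $\binom{n}{k}\frac{(n-k)!}{((n-k)/2)!\,\tau^{(n-k)/2}}$ and yields the statement directly via \eqref{eq:factorial}: there is no sum over the number of down-steps and hence no tail to absorb. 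Note also that refining your per-set bound by using the exact endpoint $S_0+n-2m$ instead of $k$ does not rescue the approach: the ratio of consecutive terms then still has order $k^2/\bigl((n-k)\tau\bigr)$, far exceeding the $O(1/(n-k))$ slack. So the fix is to change the decomposition (as the paper does), or to settle for a weaker constant than the one stated.
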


\begin{proof}[Proof of \cref{lemma:be}]
Given a possible outcome $(X_1, \ldots X_n)$ of $(\bX_1,\ldots,\bX_n)$ that yields that $\bS_n\le k$, let $I \coloneqq \{ i : X_i=-1 \}$. Noting that $\bS_n = S_0 + n - 2|I|$ conditioned on $\bX_1=X_1, \ldots, \bX_n=X_n$, we infer that $|I| \ge \frac{S_0+n-k}{2} \ge \frac{n-k}{2}$. 
Moreover, if $i_j$ is the $j$-th last element in $I$, then it deterministically holds that $\bS_{i_j-1} \le k+j$, and hence 
\[\pr{\bX_{i_j} = -1\midbar \bX_1=X_1, \ldots, \bX_{i_j-1}=X_{i_j-1}} \le  \pr{\bX_{i_j} = -1 \midbar \bS_{i_j-1} \le k+j} \le \frac{k+j}{\tau}.\]
We therefore have the following (crude) upper-bound:
\begin{align*}
    \pr{\bX_1=X_1, \ldots, \bX_n=X_n}
    &\le \prod_{i\in I} \pr{\bX_i = -1 \midbar \bX_1=X_1, \ldots, \bX_{i-1}=X_{i-1}}\\
    &\le  \prod_{j=1}^{\frac{n-k}{2}}\frac{k+j}{\tau} = \frac{\pth{\frac{n+k}{2}}!}{k!\, \tau^\frac{n-k}{2}}.
\end{align*}  
Since there are at most $\binom{n}{\frac{n-k}{2}}$ possibles choices for the $\frac{n-k}{2}$ last elements of $I$, we have
\begin{align*}
\pr{\bS_n\le k}&\le \binom{n}{\frac{n-k}{2}}\cdot\frac{\pth{\frac{n+k}{2}}!}{k!\, \tau^\frac{n-k}{2}}
= \frac{n!}{k!\pth{\frac{n-k}{2}}!\,\tau^\frac{n-k}{2}} = \binom{n}{k} \frac{(n-k)!}{\pth{\frac{n-k}{2}}!\, \tau^{\frac{n-k}{2}}} \\
&\le \sqrt{2}\binom{n}{k} \pth{\frac{2n-2k}{e\tau}}^{\frac{n-k}{2}} & \mbox{by \eqref{eq:factorial}}\\
&\le \sqrt{2} \pth{\frac{ne}{k}}^k \pth{\frac{2n-2k}{e\tau}}^{\frac{n-k}{2}}.
%&\le \binom{n}{k} \pth{\frac{3n-3k+1}{4\tau}}^{\frac{n-k}{2}} \le \pth{\frac{ne}{k}}^k \pth{\frac{3n-3k+2}{4\tau}}^{\frac{n-k}{2}}.
%&\le \pth{\frac{3n+k+2}{4\tau}}^\frac{n-k}{2} \pth{\frac{n+1}{2}}^k
\end{align*}
% Here we use that
% \[\frac{n!}{(\frac{n}{2})!(\frac{n}{2})^{\frac{n}{2}}}\approx\sqrt{2}\cdot \pth{\frac{4}{e}}^{\frac{n}{2}}\]  by Stirling formula.
\end{proof}

Let $G$ be a graph, and $\C(G) \subseteq [k]^{V(G)}$ a prescribed set of $k$-colourings of $G$. Given a colouring $\sigma \in \C(G)$ and a vertex $v\in V(G)$, we let $L_\sigma(v)$ be the set of colours $x$ such that, if we redefine $\sigma(v) \gets x$, we still have $\sigma \in \C(G)$. For instance, if $\C(G)$ is the set of proper $k$-colourings of $G$, then $L_\sigma(v)=[k]\setminus \sigma(N(v))$.

\begin{lemma}
\label{lem:badevent}
Let $G$ be a graph, and $\C(G)$ a set of colourings of $G$. 
Let $\bsig$ be drawn uniformly at random from $\C(G)$.
For a given subset of vertices $X\subseteq V(G)$, let $B_\bsig$ be the bad event that $X$ has no odd colour in $\bsig$, and let $M\subseteq X$ be a subset of size $m\le |X|$.
If there exists an integer $\tau$ such that we deterministically have $|L_\bsig(v)|\ge \tau$ for every vertex $v\in M$, then for every possible realisation $\sigma_0$ of $\restrict{\bsig}{V(G)\setminus M}$, we have
\[ \pr{B_\bsig  ~\middle|~ \restrict{\bsig}{V(G)\setminus M}=\sigma_0} \le \sqrt{2} \pth{\frac{2m}{e\tau}}^{m/2}.\]
\end{lemma}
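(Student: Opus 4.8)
The plan is to fix a realisation $\sigma_0$ of $\restrict{\bsig}{V(G)\setminus M}$, enumerate $M=\{v_1,\dots,v_m\}$ in an arbitrary order, and expose the colours $\bsig(v_1),\dots,\bsig(v_m)$ one at a time. For $0\le i\le m$, let $\bS_i$ be the number of colours occurring an odd number of times in $X$ once $\sigma_0$ and $\bsig(v_1),\dots,\bsig(v_i)$ have been revealed; so $\bS_0$ is a fixed non-negative integer (determined by $\sigma_0$), and exposing $v_i$ flips the parity of exactly the colour $\bsig(v_i)$, whence $\bX_i:=\bS_i-\bS_{i-1}\in\{-1,1\}$, with $\bX_i=-1$ precisely when $\bsig(v_i)$ is a colour that is already odd before step $i$. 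Since $\bS_i$ is a count of colours it is always non-negative, and the bad event is $B_\bsig=\{\bS_m=0\}\subseteq\{\bS_m\le 0\}$. Hence it is enough to check the hypothesis of \cref{lemma:be} for the process $(\bS_i)$ with this $\tau$, and then apply it with $n=m$ and $k=0$, which directly outputs $\pr{\bS_m\le0}\le\sqrt2\binom{m}{0}\pth{\frac{2m}{e\tau}}^{m/2}=\sqrt2\pth{\frac{2m}{e\tau}}^{m/2}$.

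The one estimate to prove is that, conditionally on the colours exposed so far, $\bX_i=-1$ with probability at most $\bS_{i-1}/\tau$. To get this I would condition on strictly more information, namely on $\restrict{\bsig}{V(G)\setminus\{v_i\}}$ (that is, on $\sigma_0$ together with $\bsig(v_j)$ for all $j\ne i$). Because $\bsig$ is uniform on $\C(G)$, conditionally on $\restrict{\bsig}{V(G)\setminus\{v_i\}}$ the colour $\bsig(v_i)$ is uniform on $L_\bsig(v_i)$, a set of size at least $\tau$ since $v_i\in M$. On the other hand, the set $T_i$ of colours that are odd in $X\setminus M$ together with $\{v_1,\dots,v_{i-1}\}$ is a function of $\sigma_0$ and $\bsig(v_1),\dots,\bsig(v_{i-1})$ only, and $|T_i|=\bS_{i-1}$; moreover $\{\bX_i=-1\}=\{\bsig(v_i)\in T_i\}$. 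Therefore
\[\pr{\bX_i=-1 \mid \restrict{\bsig}{V(G)\setminus\{v_i\}}}=\frac{|T_i\cap L_\bsig(v_i)|}{|L_\bsig(v_i)|}\le\frac{\bS_{i-1}}{\tau},\]
and averaging over the colours $\bsig(v_{i+1}),\dots,\bsig(v_m)$ — which affect neither $T_i$ nor $\bS_{i-1}$ — yields $\pr{\bX_i=-1 \mid \bsig(v_1),\dots,\bsig(v_{i-1})}\le\bS_{i-1}/\tau$ (all probabilities being taken in the space conditioned on $\restrict{\bsig}{V(G)\setminus M}=\sigma_0$), hence in particular $\pr{\bX_i=-1 \mid \bS_{i-1}=s}\le s/\tau$, exactly the hypothesis of \cref{lemma:be}.

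I do not anticipate a genuine obstacle here: the substantive content, the tail bound for a $\pm1$ walk whose downward steps are damped proportionally to the current height, has already been isolated as \cref{lemma:be}. The only point to be careful about is the measurability bookkeeping in the second paragraph — that the odd-colour set $T_i$ at step $i$ depends only on $\sigma_0$ and the prefix $(\bsig(v_1),\dots,\bsig(v_{i-1}))$, so that enlarging the conditioning to all of $\restrict{\bsig}{V(G)\setminus\{v_i\}}$ leaves it (and $\bS_{i-1}$) untouched while turning $\bsig(v_i)$ into an honestly uniform draw from a set of size at least $\tau$. With that in place the conclusion is the promised $\pr{B_\bsig \mid \restrict{\bsig}{V(G)\setminus M}=\sigma_0}\le\pr{\bS_m\le0}\le\sqrt2\pth{\frac{2m}{e\tau}}^{m/2}$.
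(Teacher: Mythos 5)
Your proof is correct, and it ends at the same place as the paper's — an application of \cref{lemma:be} with $n=m$, $k=0$ to the running count of odd colours, powered by the hypothesis $|L_\bsig(v)|\ge\tau$ for $v\in M$ — but the way you build the $\pm1$ process is genuinely different. The paper keeps a full colouring at every step: it draws $\bsig_0$ uniformly among the extensions of $\sigma_0$, resamples the vertices of $M$ one by one uniformly from their current lists, uses the fact that this heat-bath step preserves the conditional uniform distribution to identify the law of $\bsig_m$ with that of $\bsig$, and tracks the number of odd colours along the resampling chain. You instead reveal the colours of $M$ one at a time and track the odd colours of the revealed portion of $X$, obtaining the step bound by conditioning on all of $\restrict{\bsig}{V(G)\setminus\{v_i\}}$ (under which $\bsig(v_i)$ is exactly uniform on $L_\bsig(v_i)$, since that list depends only on the colouring off $v_i$) and then averaging back down to the prefix via the tower property. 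Your exposure framing has a small advantage: the increments are literally $\pm1$, whereas in the paper's chain the number of odd colours of all of $X$ changes by $0$ or $\pm2$ when a vertex is recoloured (the old colour's parity flips as well), so the paper's $\bS_i$ has to be read as counting odd colours only on $(X\setminus M)\cup\{u_1,\dots,u_i\}$ for its step analysis to be accurate — your set-up makes that bookkeeping automatic. Conversely, the resampling trick buys the uniformity of each step directly from the dynamics without any conditioning gymnastics; both mechanisms are sound, and your verification of the hypothesis of \cref{lemma:be} conditional on the full prefix is in fact the form that its proof actually uses.
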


\begin{proof}
   % Fix an integer $m\le \deg(v)$, and let $M=\{u_1, \ldots, u_m\}$ be a subset of $N(v)$ of size $m$.
    Let $M=\{u_1, \ldots, u_m\}$ be a fixed subset of $X$. Let $\sigma_0$ be a possible realisation of $\restrict{\bsig}{V(G)\setminus M}$.
    Let $\bsig_0$ be drawn uniformly at random from the extensions of $\sigma_0$ to $\C(G)$.
    For every $1\le i \le m$, we let $\bsig_i \in \C(G)$ be obtained from $\bsig_{i-1}$ by resampling the colour of $u_i$ uniformly at random from $L_{\bsig_{i-1}}(u_i)$.
    For every $i\le m$, let $\bS_i$ be the number of odd colours of $X$ in $\bsig_i$.
    For every $i\ge 1$, we have $\bS_i = \bS_{i-1}-1$ if $\bsig_i(u_i)$ is one of the $\bS_{i-1}$ odd colours of $X$ in $\bsig_{i-1}$; since there are at least $\tau$ choices for $\bsig_i(u_i)$ this happens with probability at most $k/\tau$ if $\bS_{i-1}=k$. Otherwise, we have $\bS_i = \bS_{i-1}+1$. So the sequence $(\bS_i)_{i\le m}$ satisfies the hypotheses of Lemma~\ref{lemma:be}, hence by setting $k\coloneqq 0$ we have 
    \[ \pr{B_{\bsig_m}}=\pr{\bS_m = 0} \le \sqrt{2}\pth{\frac{2m}{e\tau}}^{m/2}.\]

    Since we resample the colours uniformly at random, the random colourings $(\bsig_i)_{i\le m}$ are identically distributed. Therefore, if $\bsig$ is drawn uniformly at random from $\C(G)$, we have 
    \[\pr{B_\bsig ~\middle|~ \restrict{\bsig}{V(G)\setminus M} = \sigma_0} = \pr{B_{\bsig_0}} = \pr{B_{\bsig_m}},\]
    and the conclusion follows.
\end{proof}

\section{A greedy bound}
\label{sec:pcf}
 Given a proper $k$-colourings $\sigma\colon V(G) \to [k]$, and a vertex $u\in V(G)$, we denote $\cU_\sigma(u)$ the set of odd colours of  $N_G(u)$ in $\sigma$. So $\sigma$ is an odd $k$-colouring if $|\cU_\sigma(u)|\ge 1$ for every vertex $u\in V(G)$.
If $\cU_\sigma(u)= \{x\}$, we say that $u$ is \emph{$\sigma$-critical}, and that $x$ is its \emph{witness colour}; we denote it $w_\sigma(u)\coloneqq x$.

% \begin{thm}\label{thm:smalldegree}
%     For every graph $G$ of maximum degree $\Delta$,
%     \[\odd(G)\le\floor{\frac{3\Delta}{2}}+2.\]
% \end{thm}
\begin{proof}[Proof of Theorem~\ref{thm:smalldegree}]
    Let $v_1, \ldots, v_n$ be an arbitrary ordering of the vertices of $G$. We let $H_i \coloneqq G[\{v_1, \ldots, v_i\}]$ for every $i\in [n]$.
    Let $k \coloneqq \floor{\frac{3\Delta}{2}}+2$, and let $\C(H)$ denote the set of odd $k$-colourings of each induced subgraph $H \subseteq G$.
    We construct an odd $k$-colouring of $G$ greedily by constructing a sequence of partial colourings $(\sigma_i)_{i\in [n]}$ that satisfies the following induction hypothesis.
    \begin{equation}
        \label{eq:invariant}
        \tag{IH}
        \sigma_i \in \C(H_i) \mbox{ and } |\cU_{\sigma_i}(u)|\ge 1 \mbox{ for every vertex $u\in N_G(V(H_i))$.}
    \end{equation}
    For the base case, we may begin with the empty colouring $\sigma_0$.
    Let us now assume that we have constructed $\sigma_i$ that satisfies \eqref{eq:invariant}.
    In order to maintain \eqref{eq:invariant}, we forbid that $\sigma_{i+1}(v_{i+1})$ is one of $\sst{\sigma_i(u)}{u\in N_{H_i}(v_{i+1})} \cup \sst{w_{\sigma_i}(u)}{u \in N_G(v_{i+1}) \mbox{ and $u$ is $\sigma_i$-critical}}$.
    %, or one of the witness colours of the $\sigma_i$-critical neighbours of $v_{i+1}$.
    If at most $k-1$ colours are forbidden for $v_{i+1}$, then there remains at least one possible choice for $\sigma_{i+1}(v_{i+1})$, and letting $\sigma_{i+1}(u) = \sigma_i(u)$ for every $u\in V(H_i)$ we have that $\sigma_{i+1}$ satisfies \eqref{eq:invariant}.
    
    We may now assume that all $k$ colours are forbidden for $v_{i+1}$. Let $X\subseteq N_G(v_{i+1})$ be the set of neighbours of $v_{i+1}$ that forbid exactly one colour for $v_{i+1}$, and $Y\subseteq N_G(v_{i+1}) \setminus X$ be the set of neighbours of $v_{i+1}$ that forbid exactly two colours for $v_{i+1}$ (so $Y \subseteq V(H_i)$ and every vertex $y\in Y$ is $\sigma_i$-critical). We claim that there is a vertex $y \in Y$ such that $\sigma_i(y)$ is forbidden only by $y$ for $v_{i+1}$. Indeed, otherwise the number of forbidden colours for $v_{i+1}$ would be at most $|X| + \frac{3}{2}|Y| \le \frac{3}{2}\Delta < k$, a contradiction.
    We also claim that $\sigma_i(y)$ is not a witness colour of $v_{i+1}$ in $\sigma_i$. Indeed, otherwise every colour of $N(v_{i+1})\setminus \{y\}$ appears at least twice, hence $v_{i+1}$ has at most $\floor{\frac{\Delta-1}{2}}+1$ adjacent colours in $\sigma_i$. Since there are at most $\Delta$ witness colours in $N_G(v_{i+1})$, there are at most $\floor{\frac{3\Delta+1}{2}} \le k-1$ forbidden colours for $v_{i+1}$, a contradiction.
    Let us set $\sigma_{i+1}(v_{i+1}) \coloneqq \sigma_i(y)$, and $\sigma_{i+1}(u) \coloneqq \sigma_i(u)$ for every vertex $u\in V(H_i)\setminus \{y\}$.
    %This colouring satisfies the conditions of \eqref{eq:invariant} on the graph $H_{i+1}-y$.
    
    There remains to define $\sigma_{i+1}(y)$. Since $|\cU_{\sigma_i}(y)|=1$, it means that every colour in $N_G(y)\cap V(H_{i+1})$ appears at least twice in $\sigma_{i+1}$ except $w_{\sigma_i}(y)$ and $\sigma_{i+1}(v_{i+1})=\sigma_i(y)$. So $y$ has at most $\lfloor \frac{\Delta-2}{2}\rfloor+2 = \lfloor\frac{\Delta}{2}\rfloor+1$ adjacent colours in $\sigma_{i+1}$. Since $N_G(y)$ contains at most $\Delta$ witness colours in $\sigma_{i+1}$, there are at most $\lfloor\frac{3\Delta}{2}\rfloor+1 = k-1$ forbidden colours for $y$ in $\sigma_{i+1}$, and so there remains at least one possible choice for $\sigma_{i+1}(y)$.
    %We claim that there is no vertex in $N_{H_i}(y)$ such that $\sigma_i(y)$ is its witness colour in $\sigma_i$. Otherwise, we assume that the vertex $x\in N_{H_i}(y)$ and $\sigma_i(y)$ is the witness colour of $x$ in $\sigma_i$, then the degree of $x$ must be even. It means that $|\cU_{\sigma_i}(x)|\leq 2$. 
    This ends the proof of the induction.

    We conclude that $\sigma_n$ is an odd $k$-colouring of $G$, which proves that $\odd(G)\le k$, as desired.
\end{proof}

\section{An asymptotic version of the Odd Colouring Conjecture}
\label{sec:main}

In this section we prove Theorem~\ref{thm:main}.

\subsection{Set-up} 
Let $G$ be a connected graph of maximum degree $\Delta$.
Given a vertex-colouring $\sigma \colon V(H)\to [k]$ of some induced subgraph $H$ of $G$, for each $v\in V(H)$ a colour $x$ is said to be an \emph{odd colour} of $v$ if $x$ is an odd colour of $N_H(v)$ in $\sigma$. Let $w_\sigma(v)$ denote the unique odd colour of $v$ in $\sigma$ if such a colour exists; otherwise $w_\sigma(v)$ is undefined. When it exists, we say that $w_\sigma(v)$ is the \emph{witness colour} of $v$ in $\sigma$.

Let $k>\Delta$ be some integer. Let $V^-$ be the subset of $V(G)$ consisting of all vertices of degree less than $k/2$, and $V^+\coloneqq V(G)\setminus V^+$ be the set of vertices of degree at least $k/2$.
We denote $G^+ \coloneqq G[V^+]$.
%For any $A\subset V(G)$, let $\tilde{N}(A)$ be the set of all vertices $v$ with $N_G(v)\subseteq A$.
For every $X \subseteq V^+$, we say that a proper partial $k$-colouring $\sigma\colon X\to [k]$ of $G^+$ is \emph{admissible} if every vertex $v\in V^-$ having $N_G(v) \subseteq X$ has an odd colour in $\sigma$.
Finally, we let $V^{++}$ be the set of vertices $v\in V^+$ having $N_G(v) \subseteq V^+$.

Since the bound on $\odd(G)$ that we want to prove is weaker than that of Corollary~\ref{thm:smalldegree} if $\Delta \le 65$, we may assume that $\Delta \ge 66$.

\subsection{Colouring vertices of large degree.} 
Let $\bsig\colon V^+\to [k]$ be a uniformly random admissible colouring of $G^+$.
For every $v\in V^{++}$, we let $B_\bsig(v)$ be the random event that $N_G(v)$ has no odd colour in $\bsig$. The goal of this subsection will be to show that, with non-zero probability, no event $B_\bsig(v)$ occurs.

\begin{lemma}\label{lemma:admissible}
If $\Delta \ge 49$ and $k \ge \Delta+ 4(\ln \Delta + \ln \ln \Delta + 3)$, then there exists an admissible colouring $\sigma \colon V^+\to [k]$ of $G^+$ such that every vertex $v\in V^{++}$ has an odd colour in $\sigma$.
\end{lemma}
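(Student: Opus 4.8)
The plan is to draw $\bsig\colon V^+\to[k]$ uniformly at random from the set $\C(G^+)$ of admissible $k$-colourings of $G^+$ and to avoid all the bad events $B_\bsig(v)$, $v\in V^{++}$, via the lopsided Lov\'asz Local Lemma (\cref{lemma:lll}). One first checks that $\C(G^+)\neq\emptyset$, so that this random model makes sense. For this, colour $V^+$ greedily in an arbitrary order; when colouring a vertex $u$, forbid (i) the colours of its already-coloured $G^+$-neighbours, and (ii) for every $w\in V^-$ with $N_G(w)\subseteq V^+$ of which $u$ is the last uncoloured vertex, the unique odd colour of $N_G(w)\setminus\{u\}$ if there is exactly one (if $N_G(w)\setminus\{u\}$ has $0$ or at least two odd colours, $w$ forbids nothing, since then every colour of $u$ leaves $N_G(w)$ with an odd colour). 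The colours in (i) are indexed by $N_G(u)\cap V^+$ and those in (ii) by a subset of $N_G(u)\cap V^-$, so at most $\deg_G(u)\le\Delta<k$ are forbidden, and the greedy never gets stuck. (This is the same principle as in the proof of \cref{thm:smalldegree}, but simpler, since a vertex of $V^+$ is never a witness that might later need to be recoloured.)

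The crux is the following deterministic estimate: for every $\sigma\in\C(G^+)$ and every $u\in V^+$,
\[ |L_\sigma(u)|\ \ge\ k-\Delta\ =:\ \tau. \]
Recolouring $u$ by $x$ preserves properness provided $x\notin\sigma(N_{G^+}(u))$, which excludes at most $|N_G(u)\cap V^+|$ colours, and it preserves admissibility provided that for each $w\in V^-$ with $N_G(w)\subseteq V^+$ and $u\in N_G(w)$, the set $N_G(w)$ still has an odd colour. Writing $O_w$ for the set of colours occurring an odd number of times in $N_G(w)\setminus\{u\}$ (a set determined by $\restrict{\sigma}{N_G(w)\setminus\{u\}}$, hence by $\sigma$), the set of odd colours of $N_G(w)$ after setting $\sigma(u):=x$ is exactly $O_w\,\triangle\,\{x\}$, which is non-empty unless $O_w=\{x\}$. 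So each such $w$ excludes at most one colour, and the $w$'s in question are indexed by a subset of $N_G(u)\cap V^-$; altogether at most $|N_G(u)\cap V^+|+|N_G(u)\cap V^-|=\deg_G(u)\le\Delta$ colours are excluded.

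Now put $m:=\lfloor\tau/2\rfloor$. For each $v\in V^{++}$ pick an arbitrary $M_v\subseteq N_G(v)$ with $|M_v|=m$; this is possible since $\deg_G(v)\ge k/2>\tau/2\ge m$ and $N_G(v)\subseteq V^+$. By the estimate above and \cref{lem:badevent} (applied to $G^+$ with $X=N_G(v)$, $M=M_v$ and $\C(G^+)$), for every realisation $\sigma_0$ of $\restrict{\bsig}{V^+\setminus M_v}$,
\[ \pr{B_\bsig(v)\ \midbar\ \restrict{\bsig}{V^+\setminus M_v}=\sigma_0}\ \le\ \sqrt2\pth{\tfrac{2m}{e\tau}}^{m/2}\ \le\ \sqrt2\,e^{-m/2}\ =:\ p, \]
using $2m\le\tau$. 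Since $\overline{B_\bsig(v')}$ is a function of $\restrict{\bsig}{N_G(v')}$, it is measurable with respect to $\restrict{\bsig}{V^+\setminus M_v}$ as soon as $N_G(v')\cap M_v=\emptyset$; so \cref{lemma:lll} applies with $\Gamma(v):=\{v'\in V^{++}:N_G(v')\cap M_v\neq\emptyset\}$, of size $|\Gamma(v)|\le\sum_{u\in M_v}\deg_G(u)\le m\Delta=:d$. It remains to check $e\,p\,d=e\sqrt2\,m\Delta\,e^{-m/2}\le1$; substituting $\tau=k-\Delta\ge 4(\ln\Delta+\ln\ln\Delta+3)$ one gets $p=O\!\left(1/(\Delta\ln\Delta)\right)$, $d=O(\Delta\ln\Delta)$ and $epd\to 2\sqrt2/e^2<1$ as $\Delta\to\infty$, while a direct computation confirms $epd\le1$ for all $\Delta\ge49$ — this is exactly what the constants $4$ and $3$ in the hypothesis are calibrated for. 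By \cref{lemma:lll} some outcome $\bsig$ avoids all the $B_\bsig(v)$, which is the required colouring.

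The step I expect to be the main obstacle is the deterministic bound $|L_\sigma(u)|\ge k-\Delta$: one must notice that, although the admissibility constraint can involve many vertices $w$ around $u$, each of them forbids only one colour, because re-colouring a single vertex changes the odd-colour set of $N_G(w)$ merely by a symmetric difference with a singleton. Once this is secured the argument is a routine combination of \cref{lem:badevent} and \cref{lemma:lll}; the only remaining care is purely computational, namely making $epd\le1$ hold down to $\Delta=49$.
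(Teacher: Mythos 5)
Your proposal is correct and follows essentially the same route as the paper's proof: a uniformly random admissible colouring of $G^+$, the deterministic bound $|L_\sigma(u)|\ge k-\Delta$ for admissible colourings, \cref{lem:badevent} applied to a set $M_v$ of $m\approx(k-\Delta)/2$ neighbours of each $v\in V^{++}$, and the lopsided LLL (\cref{lemma:lll}) with $\Gamma(v)$ the events meeting $M_v$ and $d=m\Delta$, with the paper differing only in bookkeeping (it sets $\eta=2m$ and picks $m$ via the Lambert function $W_{-1}$, likewise deferring the numerical verification). One small caution: after weakening the probability to $\sqrt2\,e^{-m/2}$, your claim that $epd\le1$ for all $\Delta\ge49$ is true but only barely (the worst case, around $\Delta\approx 73$ with $m=17$, gives about $0.97$), and it relies on the integrality of $\tau=k-\Delta$ so that $m\ge17$ already at $\Delta=49$; with $m=16$ the crude bound would just fail, so either invoke integrality explicitly or keep the sharper bound $\sqrt2\,(2m/(e\tau))^{m/2}$.
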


\begin{proof}[Proof of \cref{lemma:admissible}]
Fix $k \coloneqq \Delta + \eta$, for some integer $\eta \ge 1$ whose precise value will be determined later in the proof, and let $\bsig$ be a uniformly random admissible $k$-colouring of $G^+$. Such a colouring exists, since each vertex $v$ has at most $\Delta$ constraints (at most $\deg_{V^+}(v)$ constraints because of the adjacent colours, and at most $\deg_{V^-}(v)$ constraints because of the adjacent witness colours). In particular, we have $|L_\bsig(v)|\ge \eta$ for every $v\in V^+$.
We want to show that, with non-zero probability, no bad event $B_\bsig(v)$ occurs for $v\in V^{++}$.

Let $m \le k/2$ be some integer whose explicit value will be determined later in the proof.
For every $v \in V^{++}$, we pick an arbitrary subset $M(v)\subseteq N(v)$ of size $m$. 
Then we let $\Gamma(v)\coloneqq N[M(v)]$.
For a vertex $u\in V^{++}$, the outcome of $B_\bsig(u)$ is entirely determined by the colours assigned to vertices in $N(u)$. So if we fix the realisation of $\bsig$ outside of $M(v)$, we in particular fix the outcomes of all events $B_\bsig(u)$ such that $M(v) \cap N[u] = \emptyset$. This holds for every $u\notin \Gamma(v)$.
We wish to apply Lemma~\ref{lemma:lll} to those bad events, with that definition of $\Gamma(v)$.
To that end, let $\Sigma_0$ be the set of possible realisations of $\restrict{\bsig}{V^+ \setminus M(v)}$ such that no event $B_\bsig(u)$ occurs for $u \notin \Gamma(v)$. For every $Z\subseteq V^{++} \setminus \Gamma(v)$, we have
\[\pr{B_\bsig(v) \midbar \bigcap_{u \in Z} \overline{B_\bsig(u)}} \le \sup_{\sigma_0 \in \Sigma_0} \pr{B_\bsig(v) \midbar \restrict{\bsig}{V^+\setminus M(v)}=\sigma_0} \le \sqrt{2} \pth{\frac{2m}{e\eta}}^{m/2},\]
by Lemma~\ref{lem:badevent} applied to the graph $G^+$ with $\C(G^+)$ being the set of admissible $k$-colourings of $G^+$.
%\begin{align*}
%    \nonumber
%     \pr{B_\bsig(v) \midbar \restrict{\bsig}{V^+ \setminus N[v]}=\sigma_0}
%    &\leq \sum\limit_{\tau} \pr{B_\bsig(v) \midbar \restrict{\bsig}{V^+ \setminus M(v)}=\tau}\\ \nonumber
%    &\quad \times \pr{\restrict{\bsig}{V^+ \setminus M(v)}=\tau \midbar \restrict{\bsig}{V^+ \setminus N[v]}=\sigma_0}\\ \nonumber
%    &\leq \sup\limit_{\tau}\brc{\pr{B_\bsig(v) \midbar \restrict{\bsig}{V^+ \setminus M(v)}=\tau}}\\ \nonumber
%    &\leq \sqrt{2} \pth{\frac{2m}{e\eta}}^{\frac{m}{2}}
%\end{align*}
%by \cref{lemma:badevent}, where $\tau$ is taken over all admissible colouring on $V^+ \setminus M(v)$ with $\restrict{\tau}{V^+ \setminus N[v]}=\sigma_0$. Hence we have

%\[\pr{B_\bsig(v) \midbar \bigcap_{u \in Z} \overline{B_\bsig(u)}} \le \sup\limit_{\sigma_0} \brc{\pr{B_\bsig(v) \midbar \restrict{\bsig}{V^+\setminus N[v]}=\sigma_0}} \le \sqrt{2} \pth{\frac{2m}{e\eta}}^{\frac{m}{2}}.\]

%\todo[inline]{Change the equation below so that we condition on $\restrict{\bsig}{V^+\setminus M(v)}=\sigma_0$, where $M(v)$ is a fixed subset of $m$ neighbours of $v$.}

Let us fix $\eta \coloneqq 2m$, so that this is at most $\sqrt{2}e^{-m/2}$. Since $v\in V^{++}$, we know that $\deg_{G^+}(v) \ge k/2 \ge \eta/2 \ge m$, so this lets us pick any value for $m$. To apply Lemma~\ref{lemma:lll}, we need an upper bound of $\frac{1}{em\Delta}$ for that probability, which holds precisely when $m \ge -2 W_{-1}\pth{-\frac{1}{2\sqrt{2}e\Delta}}$. We may therefore pick $m \coloneqq \ceil{-2 W_{-1}\pth{-\frac{1}{2\sqrt{2}e\Delta}}}$; a careful analysis of that value yields that $2m \le \ceil{4(\ln \Delta + \ln \ln \Delta + 3)}$ when $\Delta \ge 49$.
%By setting $\eta \coloneqq \lceil \frac{2m}{e}(e\sqrt{2}\Delta^2)^{2/m} \rceil$,
%$\eta \coloneqq \lceil\frac{3m+2}{4}(e\Delta^2)^{2/m}\rceil$,
%this is at most $1\left/e\Delta^2\right.$,
%so we may apply Lemma~\ref{lemma:lll} in order to conclude.
%Let us fix $m\coloneqq \ceil{4 \ln (2\Delta)}$, so that $ (e\sqrt{2}\Delta^2)^{2/m}\le e$, and so $\eta \le \lceil\frac{2m}{e} \times e \rceil = \lceil 8\ln (2\Delta) \rceil$.
%There remains to check that this is a valid value for $m$, i.e. $m\le \deg(v)$. This holds naturally since $\deg(v) \ge k/2 \ge \Delta/2 + m/e$, and $\lceil 4\ln(2\Delta)\rceil(1-1/e) < \Delta/2$ when $\Delta \ge 20$.
\end{proof}

\subsection{Colouring vertices of small degree.} By \cref{lemma:admissible}, $G^+$ has an admissible $k$-colouring $\sigma \colon V^+\to [k]$ with $k=\Delta+\ceil{4(\ln \Delta + \ln \ln \Delta + 3)}$, such that every $v\in V^{++}$ has an odd colour. Then we colour the vertices of $V^-$ greedily. Each time we colour $v\in V^-$, each neighbour $u$ of $v$ yields at most $2$ forbidden colours (its colour $\sigma(u)$, and its witness colour if it exists), so there are less than $k$ forbidden colours for $v$. This ensures that the greedy colouring terminates, and ends the proof of Theorem~\ref{thm:main}.

\section{Odd colourings of hypergraphs with constrained edge sizes}
\label{sec:hypergraph}
\noindent We recall that, given a graph-hypergraph pair $(G,\cH)$, an \emph{odd $k$-colouring of $(G,\cH)$} is a mapping $\sigma\colon V \to [k]$ that is both a proper colouring of $G$ and an odd colouring of $\cH$.
Given a set $S\subseteq V(\cH)$, we let $\cH[S] = (S, \{e \cap S : e\in E(\cH) \})$ be the sub-hypergraph of $\cH$ induced by $S$ (note that this definition allows the possibility that $\cH[S]$ contains $\emptyset$ as an edge).

\subsection{A bound in terms of the chromatic number for quasi-regular graphs}
\label{sec:qreg}
\noindent The proof of Theorem~\ref{thm:main} relies on the probabilistic method by analysing the behaviour of a uniformly random admissible colouring of a given graph $G$. It turns out that we have exploited the randomness of only a subset of the vertices of $G$: a subset of $m$ neighbours of each vertex $v\in V^{++}$, where $m = \Theta(\ln \Delta(G))$. If $G$ has a large minimum degree, we may restrict the random choices to a small subset of vertices that should suffice to have an odd colour in every neighbourhood, and colour the other vertices with an optimal proper colouring. 

\begin{thm}
    \label{thm:chi-bound}
    Let $(G,\cH)$ be a graph-hypergraph pair. Fix $\eta \coloneqq \ceil{4(\ln \Delta(\cH) + \ln \ln \Delta(\cH) + 3)}$, and assume that $\Delta(\cH) \ge 49$.
%    If the minimum edge size in $\cH$ is $\epsilon(\cH) \ge C \log \Delta(\cH)$ and. .
    For every subset of vertices $S\subseteq V(\cH)$, if $\epsilon(\cH[S]) \ge \eta/2$, then $(G,\cH)$ has an odd $k$-colouring, where 
    \[ k \le \chi(G\setminus S) + \Delta(G[S]) + \eta.\]
\end{thm}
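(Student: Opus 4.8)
The plan is to combine a fixed optimal proper colouring on the "outside" vertices $V(\cH)\setminus S$ with a random admissible colouring on $S$, mimicking the proof of \cref{lemma:admissible} but carried out relative to the sub-hypergraph $\cH[S]$ rather than the full neighbourhood structure. Concretely, let me first fix a proper colouring $\tau\colon V(\cH)\setminus S \to [\chi(G\setminus S)]$, and reserve a disjoint palette of $\Delta(G[S])+\eta$ further colours for the vertices of $S$; the total number of colours used is then $k \le \chi(G\setminus S)+\Delta(G[S])+\eta$. Since $\tau$ only uses colours outside this second palette, extending $\tau$ to a proper colouring of all of $G$ by colouring $S$ from the second palette is always possible greedily: each $v\in S$ has at most $\Delta(G[S])$ neighbours inside $S$, so at least $\eta$ colours of the second palette remain available, whatever the colours of the $S$-neighbours. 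Thus the set $\C$ of proper $k$-colourings of $G$ that agree with $\tau$ on $V(\cH)\setminus S$ and colour $S$ from the second palette is non-empty, and for $\bsig$ drawn uniformly at random from $\C$ we deterministically have $|L_\bsig(v)|\ge \eta$ for every $v\in S$ (here $L_\bsig(v)$ is the list of colours keeping $\bsig\in\C$, which is a subset of the second palette of size at least $\eta$).

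Next I would set up the bad events. For each edge $e\in E(\cH)$, let $e' \coloneqq e\cap S$; by hypothesis $\epsilon(\cH[S])\ge \eta/2$, so $|e'|\ge \eta/2 =: m$ for every edge. Let $B_\bsig(e)$ be the event that $e$ has no odd colour in $\bsig$. The key point is that the colours of $e\setminus S$ are fixed (they equal $\tau$ restricted there), so whether a colour is odd for $e$ depends only on the parities contributed by $e'\subseteq S$ together with a fixed parity vector from $e\setminus S$; in particular, picking an arbitrary $M(e)\subseteq e'$ with $|M(e)|=m$ and applying \cref{lem:badevent} to the graph $G[S]$ with $\C(G[S])$ the relevant colouring family, and with $\tau \coloneqq \eta$ (the guaranteed list size) in the notation of that lemma, gives
\[
\pr{B_\bsig(e) \midbar \restrict{\bsig}{S\setminus M(e)} = \sigma_0} \le \sqrt 2 \pth{\frac{2m}{e\eta}}^{m/2} = \sqrt 2\, e^{-m/2},
\]
using $\eta = 2m$ exactly as in \cref{lemma:admissible}. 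A subtlety to check carefully: \cref{lem:badevent} is stated for the bad event that a vertex subset $X$ has no odd colour, whereas here the "parity target" is shifted by the fixed colours on $e\setminus S$; I would verify that the Markovian argument behind \cref{lemma:be}/\cref{lem:badevent} is insensitive to this shift — resampling one colour in $M(e)$ changes the number of odd colours of $e$ by $\pm 1$ exactly as it changes the number of odd colours of $e'$, so the same recursion applies with the same $\tau$.

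Finally I would invoke the Lopsided LLL (\cref{lemma:lll}). Define the dependency neighbourhood of $e$ by $\Gamma(e) \coloneqq \set{f\in E(\cH) : f\cap N_{G[S]}[M(e)] \ne \emptyset}$; since the outcome of $B_\bsig(f)$ is determined by $\restrict{\bsig}{f\cap S}$ and resampling only affects vertices in $N[M(e)]$, the conditional bound above survives conditioning on any set of events outside $\Gamma(e)$. The size of $\Gamma(e)$ is at most $m\cdot(\Delta(G[S])+1)\cdot\Delta(\cH) \le m(\Delta(G)+1)\Delta(\cH)$, a polynomial in $\Delta(G)$ and $\Delta(\cH)$; with $m = \Theta(\ln\Delta(\cH))$ chosen (as in \cref{lemma:admissible}) so that $\sqrt2\,e^{-m/2}$ beats $1/(e\,|\Gamma(e)|)$ — here one needs $m \gtrsim \ln\Delta(\cH) + \ln\Delta(G)$, which may require replacing $\eta$ by $\eta' \coloneqq \ceil{4(\ln\Delta(\cH)+\ln\ln\Delta(\cH)+3)}$ enlarged to also absorb $\ln\Delta(G)$; I expect this is where the precise statement needs a small adjustment or an extra hypothesis $\Delta(G)\le \mathrm{poly}(\Delta(\cH))$, and this is the main obstacle to matching the clean bound as written. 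Granting that, the LLL yields with positive probability a $\bsig\in\C$ for which no $B_\bsig(e)$ occurs; such a $\bsig$ is a proper colouring of $G$ in which every edge of $\cH$ has an odd colour, i.e. an odd $k$-colouring of $(G,\cH)$, completing the proof.
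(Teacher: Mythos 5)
Your overall strategy is the same as the paper's (fix an optimal proper colouring outside $S$, take a uniformly random proper extension on $S$ with $|L_\bsig(v)|\ge\eta$ guaranteed, bound each bad event via \cref{lem:badevent}, finish with the lopsided LLL), and your disjoint-palette set-up on $S$ is a harmless variant of the paper's (where a vertex of $S$ may use any of the $k$ colours, its outside neighbours forbidding at most $\chi(G\setminus S)$ of them). However, there is a genuine gap exactly where you flag one: your dependency neighbourhood $\Gamma(e)=\{f : f\cap N_{G[S]}[M(e)]\neq\emptyset\}$ has size up to $m(\Delta(G[S])+1)\Delta(\cH)$, which forces $m\gtrsim\ln\Delta(G)+\ln\Delta(\cH)$ in the LLL condition, and you then concede that the stated bound would need $\eta$ enlarged or an extra hypothesis such as $\Delta(G)\le\mathrm{poly}(\Delta(\cH))$. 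As written, your argument therefore does not prove the theorem, whose bound involves $\Delta(\cH)$ only through $\eta$.

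The missing observation is that the graph-neighbourhood of $M(e)$ is irrelevant in the lopsided LLL: it suffices to take $\Gamma(e)\coloneqq\{e'\in E(\cH) : e'\cap M(e)\neq\emptyset\}$, of size at most $m\Delta(\cH)$. Indeed, every event $B_\bsig(e')$ with $e'\cap M(e)=\emptyset$ is a deterministic function of $\restrict{\bsig}{V(G)\setminus M(e)}$, so conditioning on any intersection $\bigcap_{e'\in Z}\overline{B_\bsig(e')}$ with $Z\subseteq E(\cH)\setminus\Gamma(e)$ only restricts the realisation $\sigma_1$ of $\restrict{\bsig}{V(G)\setminus M(e)}$; since \cref{lem:badevent} bounds $\pr{B_\bsig(e)\midbar \restrict{\bsig}{V(G)\setminus M(e)}=\sigma_1}$ uniformly over \emph{all} such $\sigma_1$, the same bound $\sqrt{2}\,e^{-m/2}$ survives the conditioning. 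With $d=m\Delta(\cH)$ the LLL requirement is $\sqrt{2}\,e^{-m/2}\le\frac{1}{e\,m\Delta(\cH)}$, met by $m=\ceil{-2W_{-1}\pth{-\tfrac{1}{2\sqrt{2}e\Delta(\cH)}}}\le\eta/2$ for $\Delta(\cH)\ge 49$, with no reference to $\Delta(G)$; this is how the paper obtains the clean bound. Incidentally, your ``parity shift'' worry is moot: apply \cref{lem:badevent} in $G$ directly with $X=e$ and $M=M(e)\subseteq e\cap S$ — the lemma allows $X$ to contain vertices outside $M$ whose colours are fixed by the conditioning, so no re-derivation of the Markov chain argument is needed.
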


\begin{proof}
    Let $k_0 \coloneqq \chi(G\setminus S)$ and let $\sigma_0$ be a proper $k_0$-colouring of $G\setminus S$.
    Let $k \coloneqq k_0 + \Delta(G[S]) + \eta$, and let $\bsig$ be a uniformly random proper $k$-colouring of $G$ that satisfies $\restrict{\bsig}{G\setminus S}=\sigma_0$. For every $e\in E(\cH)$, we let $B_{\bsig}(e)$ be the random event that $e$ has no colour appearing at odd times in $\bsig$. Let us show that, with non-zero probability, no event $B_{\bsig}(e)$ occurs. 
    Let $m\le \eta/2$ be an integer, and for every edge $e \in E(\cH)$ let $M(e) = \{u_1,u_2,\ldots, u_m\} \subseteq e\cap S$ be a subset of $m$ vertices in $e$. Let us recolour the vertices in $M(e)$ in turn with a uniformly random available colour. Each time we recolour $u_i$, the neighbours of $u_i$ in $S$ forbid at most $\deg_S(u_i) \le \Delta(G[S])$ colours, and the neighbours of $u_i$ not in $S$ forbid at most $k_0 = \chi(G\setminus S)$ colours (these colours are fixed by $\sigma_0$). So there are at least $\eta$ available colours for $u_i$. In particular, we have $|L_{\bsig}(v)|\geq\eta$ for each $v\in V(G)$. 

    We apply Lemma~\ref{lemma:lll} with $\Gamma(e) \coloneqq  \{ e' \in E(\cH) : e' \cap M(e) \neq \emptyset\}$ for every edge $e\in E(\cH)$, and obtain that, with non-zero probability, none of the events $B_{\bsig}(e)$ occurs. The size of $\Gamma(e)$ is at most $m\Delta(\cH)$. Let $\Sigma_1$ be the set of possible realisations of $\restrict{\bsig}{V(G) \setminus M(e)}$ such that no event $B_\bsig(e')$ occurs for $e' \notin \Gamma(e)$. For every $Z\subseteq E(\cH) \setminus \Gamma(e)$, we have
\[\pr{B_{\bsig}(e) \midbar \bigcap_{e' \in Z} \overline{B_{\bsig}(e')}} \le \sup_{\sigma_1 \in \Sigma_1}\pr{B_{\bsig}(e) \midbar \restrict{\bsig}{V(G)\setminus M(e)}=\sigma_1} \le \sqrt{2}\pth{\frac{2m}{e\eta}}^{m/2}= \sqrt{2}\,e^{-m/2},\]
 by Lemma~\ref{lem:badevent} applied to the graph $G$ with $\C(G)$ being the set of proper $k$-colourings of $G$. As in the proof of Lemma~\ref{lemma:admissible}, by fixing $m\coloneqq \ceil{-2 W_{-1}\pth{-\frac{1}{2\sqrt{2}e\Delta(\cH)}}} \le \eta/2$, the above probability is at most $\frac{1}{e\cdot m\Delta(\cH)}$. This proves the existence of a proper $k$-colouring $\sigma$ of $G$ such that every vertex has an odd colour in $\sigma$, as desired.
%    Let $\bS_i$ be the number of colours appearing an odd number of times in $e\setminus\{u_{i+1},u_{i+2},\ldots,u_m\}$. Let $S_0$ be the number of colours appearing an odd number of times among $e\setminus M(e)$ and let $\bX_i=\bS_i-\bS_{i-1}$ for $1\le i\le m$.
% Observe that if $\bX_i=-1$, then the new value of  $\bsig(u_i)$ is one of the $\bS_{i-1}$ odd colours that were in $e$ at step $i-1$. So we have $\pr{\bX_i = -1 \midbar \bS_{i-1}=s} \le s/\eta$. Obviously these random variables satisfy the condition of \cref{lemma:be} with $\tau=\eta$, hence
% \[\pr{\bS_m=0}\le \sqrt{2}\pth{\frac{2m}{e\eta}}^{\frac{m}{2}}.\]

    %By (a slightly different version of) Lemma~\ref{lemma:be}, we have 
   % \[ \pr{B_\bsig(v) \midbar \restrict{\bsig}{G\setminus S} = \sigma_0} \le \pth{\frac{3m+1}{4\eta}} \le \frac{1}{e\Delta^2}.\]
    % We may therefore apply Lemma~\ref{lemma:lll} with $\Gamma(e) \coloneqq  \{ e' \in E(\cH) : e' \cap M(e) \neq \emptyset\}$ for every edge $e\in E(\cH)$, and obtain that, with non-zero probability, none of the events $B_{\bsig}(e)$ occurs. The size of $\Gamma(e)$ is at most $m\Delta(\cH)$. 

\end{proof}

By taking $S=V(G)$, Theorem~\ref{thm:chi-bound} has the following result as a corollary.

\begin{cor}
    Let $(G,\cH)$ be a graph-hypergraph pair, and fix $\eta \coloneqq \ceil{4(\ln \Delta(\cH) + \ln \ln \Delta(\cH) + 3)}$. If $\Delta(\cH) \ge 49$ and $\epsilon(\cH) \ge \eta/2$, then there exists an odd $(\Delta(G)+\eta)$-colouring of $(G,\cH)$.
\end{cor}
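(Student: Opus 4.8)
The statement is the $S = V(G)$ specialisation of Theorem~\ref{thm:chi-bound}, so the proof is essentially a matter of checking that the hypotheses of that theorem are met with this choice of $S$ and reading off the resulting bound. First I would fix $S \coloneqq V(G)$ and observe that then $G \setminus S$ is the empty graph, so $\chi(G\setminus S) = 0$, while $G[S] = G$, so $\Delta(G[S]) = \Delta(G)$. Likewise $\cH[S] = \cH$, so $\epsilon(\cH[S]) = \epsilon(\cH)$.

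Next I would verify the two running hypotheses of Theorem~\ref{thm:chi-bound}: the assumption $\Delta(\cH) \ge 49$ is part of our hypothesis, and with the same value $\eta = \ceil{4(\ln \Delta(\cH) + \ln \ln \Delta(\cH) + 3)}$ the requirement $\epsilon(\cH[S]) \ge \eta/2$ becomes exactly the assumed condition $\epsilon(\cH) \ge \eta/2$. Hence Theorem~\ref{thm:chi-bound} applies and yields an odd $k$-colouring of $(G,\cH)$ with
\[ k \le \chi(G\setminus S) + \Delta(G[S]) + \eta = 0 + \Delta(G) + \eta = \Delta(G) + \eta, \]
which is precisely the claimed bound.

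\textbf{Main obstacle.} There is essentially no obstacle here beyond bookkeeping: the content is entirely in Theorem~\ref{thm:chi-bound}, and the only point requiring a moment's care is the (degenerate) convention that the chromatic number of the empty graph is $0$, so that taking $S = V(G)$ genuinely removes the $\chi$-term. One should also note in passing that $k \le \Delta(G)+\eta$ leaves room for $k = \Delta(G) + \eta$ in the statement, which is why the corollary asserts the existence of an odd $(\Delta(G)+\eta)$-colouring (any colouring with fewer colours trivially gives one with exactly that many by padding the palette).
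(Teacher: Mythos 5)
Your proof is correct and matches the paper exactly: the corollary is stated there as the immediate specialisation of Theorem~\ref{thm:chi-bound} obtained by taking $S = V(G)$, so that $\chi(G\setminus S)=0$, $\Delta(G[S])=\Delta(G)$, and $\epsilon(\cH[S])=\epsilon(\cH)$. No gaps; the bookkeeping you carry out is all there is to check.
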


We next show how to find a set $S$ satisfying the hypothesis of Theorem~\ref{thm:chi-bound} such that $\Delta(G[S])$ is as small as possible. 

\begin{lemma}
\label{lem:degree-dominating-set}
Let $(G,\cH)$ be a graph-hypergraph pair.
Let $\Delta \coloneqq \Delta(G)+\Delta(\cH)$, and
let us assume that the minimum edge size in $\cH$ is $\epsilon(\cH) \ge 12\ln\Delta$.
%and $\Delta(G)=r$. 
Let $r \coloneqq \min \{ \epsilon(\cH), \Delta(G)\}$.
Then for every $m$ satisfying $4\ln\Delta\le m \le r/3$, if $\Delta$ is large enough, there is a subset $S\subseteq V(G)$ such that $\epsilon(\cH[S]) \ge m$, and  $\Delta(G[S]) \le \frac{\Delta(G)}{r}(m+\sqrt{60\, m\ln \Delta})$.
\end{lemma}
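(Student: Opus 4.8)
The natural approach is the probabilistic method: take $S$ to be a random subset of $V(G)$ obtained by including each vertex independently with probability $p \coloneqq m/r$. We then need two things to hold simultaneously with positive probability: (i) every edge of $\cH$ retains at least $m$ vertices inside $S$, i.e. $\epsilon(\cH[S]) \ge m$; and (ii) no vertex $v\in S$ has too many neighbours in $S$, say $\deg_{G[S]}(v) \le \frac{\Delta(G)}{r}(m + \sqrt{60\,m\ln\Delta})$ for all such $v$. For (i), fix an edge $e$; since $|e| \ge \epsilon(\cH) \ge r$ we may restrict attention to $r$ vertices of $e$, and $|e\cap S|$ stochastically dominates a binomial $\mathrm{Bin}(r, p)$ with mean $rp = m$. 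For (ii), fix a vertex $v$; conditioned on $v\in S$, the quantity $\deg_{G[S]}(v)$ is a sum of at most $\Delta(G)$ independent indicators each with probability $p$, so its mean is at most $\Delta(G)\cdot p = \Delta(G) m / r$, and we want to bound the deviation by $\frac{\Delta(G)}{r}\sqrt{60\,m\ln\Delta}$.

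Both (i) and (ii) are large-deviation statements, so I would apply Chernoff's bounds (Lemma~\ref{lem:chernoff}). For (i), the lower-tail bound gives $\pr{|e\cap S| \le m - \eta_1} \le e^{-\eta_1^2/(2m)}$; but we want the probability that $|e\cap S| < m$, which has mean exactly $m$, so we cannot afford any slack. The standard fix is to inflate $p$ slightly: set $p \coloneqq m'/r$ with $m' \coloneqq m + \sqrt{4m\ln\Delta}$ (or similar), so the mean becomes $m'$ and the lower tail $\pr{|e\cap S| < m} = \pr{|e\cap S| \le m' - \sqrt{4m\ln\Delta}} \le e^{-(4m\ln\Delta)/(2m')} < e^{-\ln\Delta} = 1/\Delta$ once $m' \le 2m$, which holds when $m \ge 4\ln\Delta$. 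Taking a union bound over the at most $\mathrm{poly}(\Delta) \cdot |V(G)|$... — actually we need a bound on the number of edges; here one argues that the bad event for edge $e$ depends only on the $r$ chosen vertices, and the number of relevant "local patterns" is controlled, OR one simply notes there are at most $\Delta(\cH)\cdot|V|$ edges and uses Lovász Local Lemma instead, or observes that it suffices to union-bound since we have exponential-in-$m$ savings. With $m \ge 4\ln\Delta$ we get $e^{-\Omega(m)}$ savings which beats a polynomial number of events. Similarly for (ii): the upper-tail bound gives $\pr{\deg_{G[S]}(v) \ge \frac{\Delta(G)}{r}(m' + \eta_2)} \le e^{-\eta_2^2/(2(m' + \eta_2))}$; choosing $\eta_2 \approx \sqrt{60\,m\ln\Delta}$ and using $m' + \eta_2 = O(m)$ (valid since $m \le r/3$ keeps $m'$ and $\eta_2$ within constant factors of $m$) makes this $\le e^{-\Omega(\ln\Delta)}$, again beating a union bound over the at most $|V(G)|$ vertices — wait, we need the failure probability to beat $|V(G)|$, not just $\mathrm{poly}(\Delta)$; this is handled because each vertex's event depends only on a bounded neighbourhood, so a Local Lemma argument works, or one re-examines: in fact the cleanest route is Lovász Local Lemma (Lemma~\ref{lemma:lll}) where the dependency degree is polynomial in $\Delta$, so we just need each bad probability to be at most $1/(e\,\mathrm{poly}(\Delta))$, which the Chernoff bounds deliver.

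Putting it together: apply the Lovász Local Lemma to the collection of bad events $\{A_e : e\in E(\cH)\} \cup \{B_v : v\in V(G)\}$, where $A_e$ is "$|e\cap S| < m$" and $B_v$ is "$v\in S$ and $\deg_{G[S]}(v) > \frac{\Delta(G)}{r}(m + \sqrt{60\,m\ln\Delta})$". Event $A_e$ depends only on the membership indicators of the $\le \Delta(\cH)$... — more carefully, on the indicators of vertices in $e$, but we only track $r$ of them; event $B_v$ depends on the indicators of $N[v]$, of size $\le \Delta(G)+1$. Hence each event is independent of all but at most $O(\Delta(G)^2 + \Delta(G)\Delta(\cH)) = O(\Delta^2)$ other events. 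Since each bad probability is at most $\Delta^{-c}$ for a large constant $c$ (from the Chernoff estimates above, using $m \ge 4\ln\Delta$ for the $A_e$'s and the constant $60$ being chosen generously for the $B_v$'s), the condition $e\,p\,d \le 1$ of Lemma~\ref{lemma:lll} holds for $\Delta$ large enough. Therefore with positive probability all events are avoided, yielding the desired set $S$.

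**Main obstacle.** The delicate point is the tightness of constants in the two Chernoff applications: for $A_e$ we have zero slack at $p = m/r$, so we must inflate $p$, but then for $B_v$ the mean grows to $\frac{\Delta(G)}{r}m'$ rather than $\frac{\Delta(G)}{r}m$, and we must verify that the extra $\sqrt{4m\ln\Delta}$ term (from inflation) plus the Chernoff deviation still fit inside the allotted $\sqrt{60\,m\ln\Delta}$ — this is where the constant $60$ and the constraint $m \le r/3$ (keeping $m'/m$ bounded, so that $m' + \eta_2 \le 3m$ say) get used, and it requires a careful but routine bookkeeping of which constant absorbs which error term. A secondary subtlety is ensuring the Local Lemma dependency bound is genuinely polynomial in $\Delta = \Delta(G)+\Delta(\cH)$ and not in $|V(G)|$; this is why one localises the definition of $A_e$ to a fixed $r$-subset of $e$ rather than to all of $e$, so that the dependency graph has bounded degree.
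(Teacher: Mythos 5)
Your plan is the same as the paper's: sparsify randomly with an inflated inclusion probability, restrict each edge to a fixed set of $r$ of its vertices so that the dependency degree of the bad events is $O(\Delta^2)$, bound the two tails by Chernoff, and finish with the Lov\'asz Local Lemma. The gap is precisely the ``routine bookkeeping'' you deferred, and as written it does not close. With inflation $m'=m+\sqrt{4m\ln\Delta}$ and the extremal case $m=4\ln\Delta$ (so that $\sqrt{4m\ln\Delta}=m$), the lower-tail exponent for an edge is $\frac{4m\ln\Delta}{2\cdot 2m}=\ln\Delta$, i.e.\ the bound on $\pr{A_e}$ is only $\Delta^{-1}$; this cannot satisfy the condition $epd\le 1$ of Lemma~\ref{lemma:lll} against a dependency degree of order $\Delta^2$, so your assertion that each bad probability is at most $\Delta^{-c}$ for a large constant $c$ is false for your constants. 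One needs inflation $\sqrt{c\,m\ln\Delta}$ with $\frac{c}{2+\sqrt{c}}>2$, i.e.\ $c>10.5$; the paper takes $c=11$, and then the remaining margin $\sqrt{60}-\sqrt{11}$ yields an upper-tail exponent $\frac{(\sqrt{60}-\sqrt{11})^2}{2+2\sqrt{15}}\approx 2.02$, also only just above $2$. So the constant $60$ is not ``generous''; both constants are essentially forced by the requirement of beating the $\Delta^2$ dependency degree on each tail.

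A secondary slip: the aside that one could instead use a plain union bound because of ``exponential-in-$m$ savings'' is incorrect. With an inflation of order $\sqrt{m\ln\Delta}$ the savings are only $\Delta^{-\Theta(1)}$ (to get $e^{-\Omega(m)}$ one would need inflation proportional to $m$, which would overshoot the allowed degree bound $\frac{\Delta(G)}{r}\bigl(m+\sqrt{60\,m\ln\Delta}\bigr)$ when $m\gg\ln\Delta$), and $|V(G)|$ and $|E(\cH)|$ are not bounded by any function of $\Delta$, so the Local Lemma, together with your localisation of $A_e$ to an $r$-subset of $e$, is genuinely needed --- which is the route you ultimately take and the one the paper follows. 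With the inflation constant corrected from $4$ to $11$, your argument coincides with the paper's proof.
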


\begin{proof}
%Let $\alpha,\beta,\gamma$ be constants whose explicit value will be determined at the end of the proof.
%Let $r \coloneqq \min \{ \epsilon(\cH), \Delta(G)\}$.
Let us for short denote $D \coloneqq (m+\sqrt{60\, m\ln \Delta})\,\frac{\Delta(G)}{r}$. Let $\bS$ be a random subset of $V(G)$ obtained by taking each vertex independently uniformly at random with probability $p=\frac{m + \sqrt{11\, m\ln \Delta}}{r}$. We note that, since $4\ln \Delta \le m\le r/3$ by assumption, we have $p<1$ and so this probability is well-defined. The result follows if there exists a realization $S$ such that every $e\in E(\cH)$ has $|e\cap S|\geq m$ and every $v\in V(G)$ satisfies $\deg_S(v)\le D$.
Let $V(G)$ be ordered arbitrarily. For every edge $e\in E(\cH)$, we let $\widetilde{e}$ consist of the first $r$ vertices (with respect to that order) of $e$.
For every edge $e\in E(\cH)$, let $\bX_e\coloneqq|\widetilde{e}\cap\bS|$ be the random variable that counts the number of vertices of $\widetilde{e}$ in $\bS$, and we write $\mu_e\coloneqq\esp{\bX_e}=m + \sqrt{11m\ln \Delta}$. For every vertex $v\in V(G)$, we let $\bX_v\coloneqq|N_G(v)\cap\bS|$ be the random variable that counts the number of neighbours of $v$ in $\bS$, and we write $\mu_v\coloneqq\esp{\bX_v}=\frac{m + \sqrt{11m\ln\Delta}}{r}\cdot \deg_G(v)\le \frac{\Delta(G)}{r}(m + \sqrt{11m\ln\Delta})$. For every edge $e\in E(\cH)$, let $B'_e$ be the random event that $\bX_e <m$, and for every vertex $v\in V(G)$, let $B_v$ be the random event that $\bX_v >D$. 

Observe that, for every pair of vertices $(u,v)$ such that $N(u) \cap N(v) = \emptyset$, the random events $B_u$ and $B_v$ are independent. So the bad event $B_v$ is dependent with at most $\Delta(G)^2$ bad events $B_u$, and at most $\Delta(G)\Delta(\cH)$ bad events $B'_e$. So the dependency-degree of $B_v$ is at most $\Delta(G)\Delta \le \Delta^2$.
Moreover, for every pair of edges $(e,f)$, such that $\widetilde{e}\cap \widetilde{f} = \emptyset$, the random events $B'_e$ and $B'_f$ are independent.
So the bad event $B'_e$ is dependent with at most $r\Delta(\cH)$ bad events $B'_f$, and at most $r\Delta(G)$ bad events $B_v$. So the dependency-degree of $B'_e$ is at most $r\Delta \le \Delta^2$.
Hence we may apply the LLL in order to show that, with non-zero probability, no event $B_v$ or $B'_e$ occurs. Regarding that the maximum degree of the dependency-graph of those random events is at most $\Delta^2$, it suffices to prove that
    \[\pr{\bX_e<m}\le\frac{1}{e\Delta^2} \quad \et \quad
    \pr{\bX_v>D}\le\frac{1}{e\Delta^2}.\]
%
%If $d< 4\ln\Delta+2$, , and $\esp{\bX_v}=\frac{d + C_1\ln \Delta}{\delta}\cdot d(v)$, then we apply Chernoff bound on the random variable $\bX_v$ of expectancy $\mu$, noting that $d+C_1\ln \Delta\le \mu\le \frac{(d+C_1\ln \Delta)\Delta}{\delta}$;
%
We do so by applying Chernoff bounds on the random variable $\bX_e$ of expectancy $\mu_e$:
\begin{align*}
    \pr{\bX_e<m}
    &\le\exp\left(-\frac{(\mu_e-m)^2}{2\mu_e}\right)\\
    &=\exp\left(-\frac{11m\ln\Delta}{2m+2\sqrt{11m\ln\Delta}}\right)=\exp\left(-\frac{11\ln\Delta}{2+2\sqrt{\frac{11\ln\Delta}{m}}}\right) \\
    %&\le\exp\left(-\frac{8\ln\Delta}{3}\right)
    &\le \Delta^{-\frac{11}{2+\sqrt{11}}} \le \frac{1}{e\Delta^2} \hspace{60pt} \mbox{if $\Delta$ is large enough.}
\end{align*}
We do the same with the random variable $\bX_v$ of expectancy $\mu_v$:
\begin{align*}
    \pr{\bX_v>D}
    &=\exp\left(-\frac{(D-\mu_v)^2}{2D}\right)\\
    %&\le\exp\left(-\frac{(D-\mu_2)^2}{2D}\right)\\
    &\le\exp\left(-\frac{(\sqrt{60}-\sqrt{11})^2m\ln\Delta}{2m+2\sqrt{60m\ln\Delta}}\cdot \frac{\Delta(G)}{r}\right) =\exp\left(-\frac{(\sqrt{60}-\sqrt{11})^2\ln\Delta}{2+2\sqrt{\frac{60\ln\Delta}{m}}}\cdot \frac{\Delta(G)}{r}\right) \\ 
    &\le \Delta^{-\frac{\pth{\sqrt{60}-\sqrt{11}}^2}{2+2\sqrt{15}}} \le \frac{1}{e\Delta^2} \hspace{60pt} \mbox{if $\Delta$ is large enough.}
\end{align*}
By Lemma~\ref{lemma:lll}, with non-zero probability, $\bS$ satisfies the conclusion of Lemma~\ref{lem:degree-dominating-set}.
\end{proof}

We note that it is possible to drop the condition that $\Delta$ is large enough in the statement of Lemma~\ref{lem:degree-dominating-set} if we set $D\coloneqq \frac{\Delta(G)}{r}(m+\sqrt{30m(1+2\ln \Delta)})$ and $p \coloneqq \frac{1}{r}(m+\sqrt{5.5 m (1+2\ln \Delta)})$ instead, and assume that $m\ge 2+4\ln \Delta$ and $\epsilon(\cH) \ge 6+12\ln \Delta$.
\medskip

We may combine Theorem~\ref{thm:chi-bound} and Lemma~\ref{lem:degree-dominating-set} in order to obtain that, for a quasi-regular graph $G$ (that is, a graph $G$ where the ratio $\Delta(G)/\delta(G)$ is bounded), the odd chromatic number of $G$ is not too far from its chromatic number.

\begin{cor}
    \label{cor:quasi-reg}
    Let $G$ be a graph of maximum degree $\Delta$ large enough, and minimum degree $\delta \ge 12\ln (2\Delta)$. Then 
    \[\odd(G) \le \chi(G) + \ceil{4(\ln \Delta+\ln\ln \Delta + 3)} + \frac{20 \Delta \ln (2\Delta)}{\delta} = \chi(G) + O\pth{\frac{\Delta \ln \Delta}{\delta}} \quad \mbox{as $\Delta\to\infty$}.\]
\end{cor}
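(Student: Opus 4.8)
The plan is to combine Theorem~\ref{thm:chi-bound} (applied to the neighbourhood-hypergraph of $G$) with the set $S$ produced by Lemma~\ref{lem:degree-dominating-set}, and then just bound each term that appears. Here we take $\cH$ to be the neighbourhood-hypergraph of $G$, so that an odd colouring of $(G,\cH)$ is precisely an odd colouring of $G$; note $\Delta(\cH)\le \Delta$ and $\epsilon(\cH)=\delta$. We set $\Delta' \coloneqq \Delta(G)+\Delta(\cH) \le 2\Delta$, so $\ln \Delta' \le \ln(2\Delta)$, and $r \coloneqq \min\{\epsilon(\cH),\Delta(G)\} = \min\{\delta,\Delta\} = \delta$ since $\delta \le \Delta$. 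The hypothesis $\delta \ge 12\ln(2\Delta) \ge 12 \ln \Delta'$ is exactly what Lemma~\ref{lem:degree-dominating-set} requires on $\epsilon(\cH)$.

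First I would fix $m \coloneqq \ceil{4 \ln \Delta'}$; by assumption on $\delta$ this satisfies $4\ln \Delta' \le m \le r/3 = \delta/3$ (up to the routine check that $\delta \ge 12 \ln(2\Delta)$ leaves enough room, using $\Delta$ large enough to absorb the ceiling), so Lemma~\ref{lem:degree-dominating-set} yields a set $S \subseteq V(G)$ with $\epsilon(\cH[S]) \ge m$ and
\[ \Delta(G[S]) \le \frac{\Delta(G)}{r}\pth{m + \sqrt{60\,m\ln \Delta'}} \le \frac{\Delta}{\delta}\pth{m + \sqrt{60\,m\ln \Delta'}}. \]
Since $m = O(\ln \Delta')= O(\ln \Delta)$, the bracketed quantity is $m + \sqrt{60\,m\ln\Delta'} = O(\ln \Delta)$, and a short computation (again for $\Delta$ large enough) gives the clean bound $\Delta(G[S]) \le \frac{20\,\Delta\ln(2\Delta)}{\delta}$.

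Next I would apply Theorem~\ref{thm:chi-bound} with this $S$: its hypothesis is $\epsilon(\cH[S]) \ge \eta/2$ where $\eta \coloneqq \ceil{4(\ln \Delta(\cH) + \ln\ln \Delta(\cH)+3)}$, and since $\Delta(\cH) \le \Delta \le \Delta'$ while $m \ge 4\ln \Delta' \ge \eta/2$ (the $\ln\ln$ and constant terms are dominated once $\Delta$ is large), this holds. We also need $\Delta(\cH) \ge 49$, which is fine for $\Delta$ large enough. The theorem then gives an odd $k$-colouring of $(G,\cH)$ with
\[ k \le \chi(G\setminus S) + \Delta(G[S]) + \eta \le \chi(G) + \frac{20\,\Delta\ln(2\Delta)}{\delta} + \ceil{4(\ln \Delta + \ln\ln \Delta + 3)}, \]
using the monotonicity $\chi(G \setminus S) \le \chi(G)$ and $\Delta(\cH) \le \Delta$. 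This is exactly the claimed bound, and the final asymptotic form follows since $\eta = O(\ln \Delta)$ and the middle term is $O(\Delta \ln \Delta/\delta)$, which dominates $\eta$ when $\delta = O(\Delta)$.

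The only delicate point — really the main obstacle — is the bookkeeping that makes the three "$\Delta$ large enough" thresholds compatible and lets the single constant $20$ swallow all lower-order terms: one must check that $m=\ceil{4\ln\Delta'}$ simultaneously satisfies $m\le \delta/3$ (from $\delta\ge 12\ln(2\Delta)$, with care about ceilings), that $m \ge \eta/2$, and that $\frac{\Delta}{\delta}(m+\sqrt{60\,m\ln\Delta'}) \le \frac{20\,\Delta\ln(2\Delta)}{\delta}$, i.e. $m + \sqrt{60\,m\ln\Delta'} \le 20\ln(2\Delta)$; with $m \approx 4\ln(2\Delta)$ the left side is about $4\ln(2\Delta) + \sqrt{240}\,\ln(2\Delta) \approx 19.5\ln(2\Delta)$, so this just barely fits for $\Delta$ large. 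None of this is conceptually hard, but it is where all the "large enough" hypotheses get used.
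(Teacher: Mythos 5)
Your proof is correct and follows exactly the route the paper intends: the corollary is obtained precisely by applying \cref{lem:degree-dominating-set} (with $\cH$ the neighbourhood-hypergraph, $r=\delta$, $m\approx 4\ln(2\Delta)$) to produce $S$, then feeding it into \cref{thm:chi-bound}, with the same numerical bookkeeping $4+\sqrt{240}<20$. One small repair to the point you flagged: ``$\Delta$ large enough'' does not absorb the ceiling when $\delta$ sits at its minimum allowed value, but \cref{lem:degree-dominating-set} never requires $m$ to be an integer, so taking $m=4\ln\Delta'$ itself gives $m\le\delta/3$ exactly while still ensuring $\epsilon(\cH[S])\ge m\ge\eta/2$ for $\Delta$ large.
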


Note that the minimum degree condition in 
    Corollary~\ref{cor:quasi-reg} can be dropped. Indeed, if it is not fulfilled, then the upper bound on $\odd(G)$ is larger than that given by Theorem~\ref{thm:smalldegree}.

\subsection{Graphs of small chromatic number}
Given a graph-hypergraph pair $(G,\cH)$, if we can find a set $S$ that satisfies the hypothesis of Theorem~\ref{thm:chi-bound} such that $\chi(G[S])$ is much smaller than $\Delta(G[S])$, we can use another approach to obtain a better bound.

\begin{thm}
    \label{thm:chi-bound2} Let $(G,\cH)$ be a graph-hypergraph pair. Fix $\eta \coloneqq \ceil{4(\ln \Delta(\cH) + \ln \ln \Delta(\cH) + 3)}$, and assume that $\Delta(\cH) \ge 49$.
    For every subset of vertices $S\subseteq V(\cH)$, if $\epsilon(\cH[S]) \ge \eta/2$, then $(G,\cH)$ has an odd $k$-colouring, where 
    \[ k \le \chi(G\setminus S) + \eta\, \chi(G[S]).\]
\end{thm}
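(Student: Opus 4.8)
The plan is to mimic the proof of Theorem~\ref{thm:chi-bound}, but instead of colouring $G[S]$ with $\Delta(G[S])+\eta$ fresh colours via a trivially available palette, we will colour $G[S]$ with $\eta$ copies of an optimal $\chi(G[S])$-colouring, and disturb it randomly. Concretely, fix $k_0 \coloneqq \chi(G\setminus S)$ with a proper colouring $\sigma_0$ of $G\setminus S$ using colours $[k_0]$, and fix $k_1 \coloneqq \chi(G[S])$ with a proper colouring $\tau_0$ of $G[S]$. Set $k \coloneqq k_0 + \eta\,k_1$, and think of the palette for vertices of $S$ as $\{0,1,\dots,\eta-1\}\times[k_1]$, identified with a subset of $[k]$ disjoint from $[k_0]$. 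For a vertex $u\in S$, the colour $\tau_0(u)$ together with a "shift'' $j\in\{0,\dots,\eta-1\}$ gives the colour $(j,\tau_0(u))$; any choice of shift keeps the colouring proper on $G[S]$, since adjacent vertices in $S$ already differ in the second coordinate. Let $\bsig$ be the random $k$-colouring obtained by keeping $\sigma_0$ on $G\setminus S$ and, for each $u\in S$, choosing the shift $j(u)$ uniformly at random, subject to the global constraint that $\bsig$ is an admissible colouring in the sense of Lemma~\ref{lem:badevent} — that is, we sample $\bsig$ uniformly from the set $\C(G)$ of proper $k$-colourings of $G$ whose restriction to $G\setminus S$ is $\sigma_0$ and whose restriction to each vertex of $S$ lies in the shifted palette of its $\tau_0$-class.

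The key point is the size of the list $L_{\bsig}(u)$ for $u\in S$: when we resample the shift of $u$, the only forbidden shifts come from neighbours of $u$ \emph{inside} $S$ that happen to share the same $\tau_0$-class — but there are none, since $\tau_0$ is proper on $G[S]$. Hence every one of the $\eta$ shifts is available for $u$, i.e. $|L_{\bsig}(u)| \ge \eta$ for every $u\in S$. Now, exactly as before, for each edge $e\in E(\cH)$ pick $M(e)\subseteq e\cap S$ of size $m \coloneqq \ceil{-2W_{-1}(-1/(2\sqrt2\,e\,\Delta(\cH)))} \le \eta/2$ (this is possible because $\epsilon(\cH[S])\ge\eta/2\ge m$, so $e\cap S$ has at least $m$ vertices), let $B_{\bsig}(e)$ be the bad event that $e$ has no odd colour in $\bsig$, and apply Lemma~\ref{lem:badevent} to $G$ with $\C(G)$ as above and $\tau\coloneqq\eta$ to get
\[\pr{B_{\bsig}(e) \midbar \restrict{\bsig}{V(G)\setminus M(e)}=\sigma_1} \le \sqrt{2}\pth{\frac{2m}{e\eta}}^{m/2} \le \sqrt{2}\,e^{-m/2} \le \frac{1}{e\,m\,\Delta(\cH)}\]
for every admissible realisation $\sigma_1$ of $\bsig$ off $M(e)$. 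Taking $\Gamma(e) \coloneqq \{e'\in E(\cH) : e'\cap M(e)\neq\emptyset\}$, of size at most $m\Delta(\cH)$, Lemma~\ref{lemma:lll} yields a colouring $\sigma\in\C(G)$ with no bad event, which is proper on $G$ and has an odd colour in every edge of $\cH$ — an odd $k$-colouring of $(G,\cH)$ with $k = \chi(G\setminus S) + \eta\,\chi(G[S])$, as claimed.

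I expect the only genuinely delicate point to be making sure the deterministic lower bound $|L_{\bsig}(u)|\ge\eta$ required by Lemma~\ref{lem:badevent} really does hold for \emph{every} realisation of $\bsig$ and every $u\in M(e)$, not just "typically''. This is what forces the two-coordinate palette design: because $\tau_0$ is proper on $G[S]$, no neighbour of $u$ in $S$ can block a shift of $u$, and neighbours of $u$ outside $S$ use colours in $[k_0]$, which are disjoint from the shifted palette, so they block nothing either; hence all $\eta$ shifts are always available, independently of how the other shifts were sampled. One should also double-check the edge-case bookkeeping (e.g. that $\eta\ge 1$ and $m\ge 1$, that $\cH[S]$ containing $\emptyset$ as an edge is harmless since the hypothesis $\epsilon(\cH[S])\ge\eta/2\ge 1$ rules that out, and that $\chi(G\setminus S)$ and $\chi(G[S])$ are $0$ exactly when the relevant vertex set is empty, in which case the bound degenerates gracefully). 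None of this should present a real obstacle; the argument is a clean adaptation of Theorem~\ref{thm:chi-bound}.
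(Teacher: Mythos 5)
Your proposal is correct and is essentially the paper's own proof: the paper also colours $S$ with product colours $(\sigma_1(v),\bx_v)$ where $\bx_v$ is a uniform shift in $[\eta]$ (so every vertex of $S$ deterministically has $|L_\bsig(v)|\ge\eta$), keeps a fixed $\chi(G\setminus S)$-colouring outside $S$, and then applies Lemma~\ref{lem:badevent} together with the Local Lemma with $\Gamma(e)=\{e' : e'\cap M(e)\neq\emptyset\}$ of size at most $m\Delta(\cH)$. The only cosmetic difference is your choice $m=\ceil{-2W_{-1}(-1/(2\sqrt2 e\Delta(\cH)))}$ versus the paper's $m=\eta/2$, both of which satisfy the needed inequalities when $\Delta(\cH)\ge 49$.
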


\begin{proof}
    Let $G_0 \coloneqq G\setminus S$ and $G_1\coloneqq G[S]$. For each $i\in \{0,1\}$, we write $k_i \coloneqq \chi(G_i)$, and we let $\sigma_i$ be a proper $k_i$-colouring of $G_i$. 

    We define a random proper $k$-colouring $\bsig$ of $G$ as follows, where $k=k_0 + \eta k_1$. For every $v\in S$, draw some random value $\bx_v$ uniformly at random from $[\eta]$, and let $\bsig(v) \coloneqq (\sigma_1(v), \bx_v)$. For every $v\notin S$, let $\bsig(v) \coloneqq \sigma_0(v)$.
    Let us order the vertices in $V(G)$ arbitrarily.
    For every $e\in E(\cH)$, we let $B_\bsig(e)$ be the random (bad) event that $e$ has no odd colour. Let $m\coloneqq \eta/2$, and let $M(e)$ contain the smallest $m$ vertices of $e\cap S$.
       Let $\sigma$ be a possible realisation of $\restrict{\bsig}{V(G)\setminus M(e)}$. By construction, for every $v\in S$, there are $\eta$ choices in $L_\bsig(v)$.
    Hence we may apply Lemma~\ref{lem:badevent} and obtain that
    \[ \pr{B_\bsig(e) ~\middle|~ \restrict{\bsig}{V(G) \setminus M(e)}=\sigma} \le \sqrt{2} \pth{\frac{2m}{e\eta}}^{m/2} = \sqrt{2}\,e^{-m/2}.\]

    As explained in the proof of Theorem~\ref{thm:main}, this is at most $\frac{1}{em\Delta(\cH)}$.
    For an edge $e'\in E(\cH)$, the outcome of $B_\bsig(e')$ is entirely determined by the realisation of $\restrict{\bsig}{e'}$. So if we fix the realisation of $\bsig$ outside of $M(e)$, we in particular fix the outcomes of all events $B_\bsig(e')$ such that $M(e) \cap e' = \emptyset$. So we set $\Gamma(e) \coloneqq \{e' : e' \cap M(e) \neq \emptyset\}$, and observe that these sets have size at most $m\Delta(\cH)$. We may now apply Lemma~\ref{lemma:lll} to the bad events $(B_\bsig(e))$, with that definition of $\Gamma(e)$, and conclude that with positive probability, no event $B_\bsig(e)$ occurs. So there is a realisation of $\bsig$ that is an odd $k$-colouring of $(G,\cH)$. This concludes the proof.
\end{proof}

\begin{cor}
    Let $G$ be a graph of maximum degree $\Delta \ge 49$, and fix $\eta \coloneqq \ceil{4(\ln \Delta + \ln \ln \Delta + 3)}$. If the minimum degree of $G$ is at least $\eta/2$, then 
    \[ \odd(G) \le \eta\, \chi(G),\]
     and this is tight up to a multiplicative constant for a family of graphs of increasing chromatic numbers.
\end{cor}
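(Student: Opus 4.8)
The plan is to derive the corollary directly from \Cref{thm:chi-bound2} by choosing $S = V(G)$, so that $G\setminus S$ is the empty graph and $\chi(G\setminus S)=0$, while $G[S]=G$ and $\cH$ is the neighbourhood-hypergraph of $G$. With this choice, $\epsilon(\cH[S]) = \epsilon(\cH) = \delta(G) \ge \eta/2$ by the hypothesis on the minimum degree, and $\Delta(\cH)=\Delta(G)=\Delta\ge 49$, so all the hypotheses of \Cref{thm:chi-bound2} are met and we obtain $\odd(G) = \odd(G,\cH) \le \chi(G\setminus S) + \eta\,\chi(G[S]) = \eta\,\chi(G)$, which is the claimed bound.

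For the tightness part, I would exhibit an explicit family $(G_t)_{t\ge 1}$ with $\chi(G_t)\to\infty$ for which $\odd(G_t) = \Omega(\chi(G_t)\ln\Delta(G_t))$, matching the upper bound up to a constant factor. The natural candidate is a blow-up or tensor-type construction: start from a graph $H_t$ of chromatic number $t$ and girth large enough that it is locally tree-like, and take a suitable product (e.g.\ $H_t \times K_s$ or a balanced blow-up where each vertex is replaced by an independent set of size $s$ with complete bipartite connections) so that neighbourhoods become large and highly structured. The point is that in such a product the odd-colouring constraint in each neighbourhood forces many colours: on the one hand the chromatic number of the product is controlled (it is at most a small multiple of $t$, and at least $t$), and on the other hand a counting or entropy argument on a single large neighbourhood shows that an odd colouring needs $\Omega(\log(\text{neighbourhood size}))$ colours \emph{per colour class} of the base structure, giving the extra logarithmic factor. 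One can calibrate the two parameters $t$ and $s$ so that $\ln\Delta(G_t) = \Theta(\ln s)$ grows, while $\chi(G_t) = \Theta(t)$ grows, and the lower bound on $\odd(G_t)$ becomes $\Omega(t\ln s) = \Omega(\chi(G_t)\ln\Delta(G_t))$.

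The main obstacle is the tightness construction, not the upper bound. The upper bound is an immediate specialisation of \Cref{thm:chi-bound2}. For the lower bound, the delicate point is to simultaneously (i) keep $\chi$ small — so the blow-up or product must not inflate the chromatic number beyond a constant factor of $t$, which typically requires the base graph to be chosen with some care (for instance a Kneser-type or shift graph so that its fractional/integral chromatic number are both understood) — and (ii) force $\odd$ to be large, which needs a clean argument that any proper colouring with few colours leaves some neighbourhood with every colour appearing an even number of times. The cleanest route is probably to use a neighbourhood that is itself a balanced complete multipartite graph on $s$ parts: in any proper colouring each part is monochromatic-free but, crucially, if fewer than $c\ln s$ colours are used on the neighbourhood then by a pigeonhole/parity argument over the $2^{(\text{\#colours})}$ possible parity patterns, two parts of equal size receive colour-multiset with identical parity vector, and one can merge considerations across parts to conclude that no colour has odd total count. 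Making this parity argument precise — and ensuring the global graph (not just one neighbourhood) still has small chromatic number — is where the real work lies; I would isolate it as a separate lemma before assembling the family $(G_t)$.
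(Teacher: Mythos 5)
Your upper-bound argument is exactly the paper's: take $\cH$ to be the neighbourhood-hypergraph, set $S=V(G)$ in \cref{thm:chi-bound2}, note $\chi(G\setminus S)=0$, $\epsilon(\cH)=\delta(G)\ge\eta/2$ and $\Delta(\cH)=\Delta(G)$. That half is fine.

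The tightness half, however, has a genuine gap: it is a plan rather than a proof, and the key step you sketch does not work as stated. If a single neighbourhood induces a balanced complete multipartite graph on $s$ parts and fewer than $\log_2 s$ colours are used, pigeonhole does give two parts with identical parity vectors, but that only shows the \emph{union of those two parts} has no odd colour; it says nothing about the parity pattern of the full neighbourhood, so you cannot conclude that the odd-colouring condition is violated. To turn such a parity argument into a lower bound you must engineer the graph so that the ``bad'' subset is itself the exact neighbourhood of some vertex, and this is precisely what the paper's construction does: start from a complete $k_0$-partite graph with parts $X_1,\dots,X_{k_0}$ of size $2n_0$, and for every $i$ and every $n_0$-subset $S\subseteq X_i$ attach a new vertex $v$ with $N(v)=S$. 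If some part used at most $n_0$ colours, one removes one vertex per odd colour and then monochromatic pairs to reach an $n_0$-subset with no odd colour, contradicting the condition at the attached vertex; hence each part needs at least $n_0+1$ colours, the parts use disjoint colour sets, and $\odd(G)\ge k_0(n_0+1)>\chi(G)\log_4\Delta(G)$, while $\chi(G)=k_0$ because every attached vertex's neighbourhood lies inside one independent part. Your blow-up of a high-girth base graph is unnecessary (and risks inflating $\chi$ or complicating its control); a complete multipartite base already keeps $\chi$ exact. Finally, you do not check that your family satisfies (or nearly satisfies) the minimum-degree hypothesis of the corollary; the paper notes $\delta(G)=n_0\ge\log_4\Delta(G)$, a factor $4\ln 2$ short, and gives a variant with parts of size $\tfrac76 n_0$ to meet the required bound, losing only a constant factor in the lower bound.
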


\begin{proof}
    The upper bound on $k$ is a direct consequence of Theorem~\ref{thm:chi-bound2} where $\cH$ is the neighbourhood-hypergraph of $G$, and $S=V(G)$. Let us prove the tightness of the bound.

    Fix an integer $k_0\ge 2$, and let $n_0 \ge k_0$ be an even integer.
    Let $G_0$ be a complete $k_0$-partite graph, with parts $X_1, \ldots, X_{k_0}$ all of size $2n_0$.
    For every $i\in [k_0]$, and for every $S \in \binom{X_i}{n_0}$, we add a vertex with neighbourhood $S$ in $G_0$. Let $G$ be the obtained graph; let us show that $\odd(G) \ge k_0 (n_0+1)$. 
    We write $k\coloneqq \odd(G)$, and let $\sigma$ be an odd $k$-colouring of $G$.
    First observe that we must have $\sigma(X_i) \cap \sigma(X_j) = \emptyset$ for every $i\neq j$, otherwise we would find a monochromatic edge in $\sigma$. So it suffices to show that $|\sigma(X_i)| \ge n_0+1$ for every $i\in [k_0]$.
    Let us assume for the sake of contradiction that $|\sigma(X_i)| \le n_0$. For every odd colour of $X_i$, we remove one vertex with that colour from $X_i$. We are left with at least $n_0$ vertices. We now remove monochromatic pairs of vertices from $X_i$ until exactly $n_0$ vertices remain. We obtain a set $S$ with no odd colour, and by construction there is a vertex in $V(G)$ such that $N(v)=S$. So $v$ has no odd colour in $\sigma$, a contradiction.

    The maximum degree of $G$ is $\frac{1}{2}\binom{2n_0}{n_0}+(k_0-1)\cdot 2n_0 < 4^{n_0}$ when $n_0$ is large enough, and $\chi(G)=\chi(G_0)=k_0$. So we have 
    \[ \odd(G) > \chi(G)\log_4 \Delta(G),\]
    while the minimum degree of $G$ is $n_0 \ge \log_4 \Delta(G)$.

    Note that the minimum degree of $G$ is smaller than what is required by a factor $4\ln 2$. If we want to meet the required lower bound for the minimum degree of $G$, we may let the size of each $X_i$ be $\frac{7}{6}n_0$ instead of $2n_0$. We can now prove that more than $\frac{n_0}{6}$ colours must appear on each part $X_i$.
    We still have $\chi(G)=k_0$ and $\delta(G)=n_0$, while 
    \begin{align*}
        \Delta(G) = (k_0-1)\cdot\frac{7}{6}n_0 + \binom{7n_0/6}{n_0} = (k_0-1)\cdot\frac{7}{6}n_0 + \binom{7n_0/6}{n_0/6} \le (k_0-1)\cdot\frac{7}{6}n_0 + (7e)^{n_0/6}.
    \end{align*}
    Since $6/\ln(7e) > 2.03$, when $n_0$ is large enough we have $\delta(G)=n_0 \ge 2(\ln \Delta(G) + \ln \ln \Delta(G) + 3)$, as required. On the other hand, we have $\odd(G) > \frac{1}{6} k_0 n_0$, so we are only a factor $12$ away from the upper bound guaranteed in that regime\footnote{With a more refined estimate of the binomial coefficient, one can replace $7/6$ with $53/45$, and conclude that we are only a factor $45/4$ away from best possible.}.
\end{proof}

\section{The effect of multiplying the constraints}
\label{sec:general}
\noindent We may now wonder what happens when we seek for an odd colouring of a given graph-hypergraph pair $(G,\cH)$, such that every edge $e\in E(\cH)$ has \emph{many} odd colours.
% An $r$-\emph{odd colouring}\marginpar{odd colouring\\ of $\cH$} of a hypergraph $\cH$ satisfies the constraint that there are at least $\min\brc{r,|e|}$ colours with an odd number of appearances in every edge $e\in E(\cH)$.
For a positive integer $h$, an $h$-\emph{odd colouring} $\sigma$ of a hypergraph $\cH$ satisfies the constraint that every edge $e\in E(\cH)$ has at least $\min\brc{h,|e|}$ odd colours in $\sigma$.
For a graph-hypergraph pair $(G,\cH)$, an \emph{$h$-odd $k$-colouring} of $(G,\cH)$ is a mapping $\sigma\colon V \to [k]$ that is both a proper colouring of $G$ and an $h$-odd colouring of $\cH$. The least $k$ for which $G$ is $h$-odd $k$-colourable is in turn called the $h$-\emph{odd chromatic number} of $(G,\cH)$ and we denote it by $\odd^h(G,\cH)$. We say that $\sigma$ is an \emph{$h$-odd $k$-colouring} of $G$ if $\sigma$ is an $h$-odd $k$-colouring of $(G,\cH)$ when $\cH$ is the neighbourhood-hypergraph of $G$. We denote $\odd^h(G)$ the $h$-\emph{odd chromatic number} of $G$. By a greedy algorithm, we can immediately get $\odd^h(G)\leq (h+1)\Delta(G)+1$ for any graph $G$ and integer $h$, or more generally that $\odd^h(G,\cH) \le h\Delta(\cH)+\Delta(G)+1$ given a graph-hypergraph pair $(G,\cH)$. We will show that we can ensure much better upper bounds with an additional reasonable minimum edge size condition in $\cH$.

% \begin{thm}
%     Let $(G,\cH)$ be a graph and a hypergraph with $V(G)=V(\cH)$. For any integer $h$ there exists an $h$-odd $k$-colouring of $(G,\cH)$, where $$k=h\Delta(\cH)+\Delta(G)+1.$$
% \end{thm}

We will rely on an extension of Lemma~\ref{lem:badevent} to $h$-odd colourings, that we present hereafter.
\begin{lemma}
\label{lem:hypergraphbadevent}
Let $(G,\cH)$ be a graph-hypergraph pair, and $\C(G)$ a set of colourings of $G$. Let $\bsig$ be drawn uniformly at random from $\C(G)$, and assume that there exists an integer $\tau$ such that we deterministically have $|L_\bsig(v)|\ge \tau$ for every integer $v\in V(G)$. Let $h\ge 1$ and $t\ge 0$ be integers that satisfy $m\coloneqq h-1+t \le \epsilon(\cH)$, and $B_\bsig(e)$ be the bad event that $e$ has less than $h$ odd colours in $\bsig$. Then, for every subset $M(e)\subseteq e$ of size $m$, and for every possible realisation $\sigma_0$ of $\restrict{\bsig}{V(G)\setminus M(e)}$, we have
\[ \pr{B_\bsig(e)  ~\middle|~ \restrict{\bsig}{V(G)\setminus M(e)}=\sigma_0} \le  \sqrt{2}\, \binom{m}{t}\pth{\frac{2t}{e\tau}}^{t/2},\]
for every $m \le \epsilon(\cH)$.

\end{lemma}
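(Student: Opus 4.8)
The idea is to mimic the proof of Lemma~\ref{lem:badevent} (the $h=1$ case), but now instead of tracking a single random walk $(\bS_i)$ counting odd colours, we let the walk run for $m=h-1+t$ resampling steps and argue that the final value is at least $h$ with high probability. More precisely, fix $M(e)=\{u_1,\dots,u_m\}\subseteq e$, fix a realisation $\sigma_0$ of $\restrict{\bsig}{V(G)\setminus M(e)}$, draw $\bsig_0$ uniformly from the extensions of $\sigma_0$ to $\C(G)$, and obtain $\bsig_i$ from $\bsig_{i-1}$ by resampling the colour of $u_i$ uniformly in $L_{\bsig_{i-1}}(u_i)$. As before, the $\bsig_i$ are identically distributed, so it suffices to bound $\pr{B_{\bsig_m}}=\pr{\bS_m<h}$, where $\bS_i$ is the number of odd colours of $e$ in $\bsig_i$. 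Each resampling step moves $\bS_i$ to $\bS_{i-1}\pm 1$, with $\bS_i=\bS_{i-1}-1$ happening only when the resampled colour hits one of the current odd colours, i.e. with probability at most $\bS_{i-1}/\tau$ given $\bS_{i-1}$. (Here one uses $m\le\epsilon(\cH)\le|e|$ so that all $m$ vertices really lie in $e$ and the count is well-defined.)

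**Carrying it out.** I would run the same combinatorial estimate as in the proof of Lemma~\ref{lemma:be}. A realisation $(X_1,\dots,X_m)$ of the step signs with $\bS_m<h$, i.e. $\bS_m\le h-1$, has $|I|\ge\frac{m-(h-1)}{2}=\frac{t}{2}$ down-steps, where $I=\{i:X_i=-1\}$; and if $i_j$ is the $j$-th last down-step then $\bS_{i_j-1}\le (h-1)+j$ deterministically, so the conditional probability of that down-step is at most $\frac{h-1+j}{\tau}$. Multiplying over $j=1,\dots,t/2$ (the $t/2$ last down-steps; here one should really write $\lceil t/2\rceil$ or assume $t$ even, matching how Lemma~\ref{lemma:be} handles parity) gives a bound of $\prod_{j=1}^{t/2}\frac{h-1+j}{\tau}=\frac{((h-1+t/2)!/(h-1)!)}{\tau^{t/2}}$ per realisation, and there are at most $\binom{m}{t/2}\le\binom{m}{t}$—wait, this needs care—choices for the positions of those last down-steps. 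Summing, $\pr{\bS_m\le h-1}\le\binom{m}{t}\cdot\frac{(h-1+t/2)!}{(h-1)!\,\tau^{t/2}}$; then bounding $\frac{(h-1+t/2)!}{(h-1)!}\le\frac{(h-1+t)!/(h-1+t/2)!}{1}\cdot(\text{something})$ — more cleanly, write it as $\frac{(t/2)!}{1}\binom{h-1+t/2}{t/2}$ is not quite it either, so the honest route is $\frac{(h-1+t/2)!}{(h-1)!}=\binom{h-1+t/2}{t/2}(t/2)!\le (t/2)!\,2^{h-1+t/2}$ is lossy; better to directly invoke \eqref{eq:factorial}-style Stirling as in Lemma~\ref{lemma:be} on the product $\prod_{j=1}^{t/2}(h-1+j)$, which telescopes to $\frac{(h-1+t/2)!}{(h-1)!}$ and is at most $(t/2+h-1)^{t/2}\le m^{t/2}$; combined with $\binom{m}{t}$ and a $\sqrt2$ slack from Stirling this yields $\sqrt2\,\binom{m}{t}(2t/(e\tau))^{t/2}$ once one checks the constants line up as in the $h=1$ case (where $t=m$, $h=1$ recovers exactly Lemma~\ref{lem:badevent}).

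**The main obstacle.** The genuinely delicate point is the bookkeeping around the parity of $t$ and the exact exponent/constant: in Lemma~\ref{lemma:be} the factor $\frac{(n-k)!}{((n-k)/2)!}$ is massaged via \eqref{eq:factorial} into $(2(n-k)/(e\tau))^{(n-k)/2}$, and I need the analogous manipulation to convert $\binom{m}{t/2}\cdot\prod_{j=1}^{t/2}\frac{h-1+j}{\tau}$ into exactly $\binom{m}{t}(2t/(e\tau))^{t/2}$ — in particular replacing $\binom{m}{t/2}$ by $\binom{m}{t}$ and absorbing the $h-1$ shift. The shift is harmless because $h-1+j\le t+j$ is false but $h-1+j\le m$ is true only crudely; the cleanest fix is to note $\prod_{j=1}^{t/2}(h-1+j)\le\prod_{j=1}^{t/2}\frac{t}{2}\cdot(\text{ratio})$ — no: one simply observes $\prod_{j=1}^{t/2}(h-1+j)=\frac{(h-1+t/2)!}{(h-1)!}\le\frac{(t/2)!\,(h-1+t/2)!}{(t/2)!\,(h-1)!}=(t/2)!\binom{h-1+t/2}{t/2}$ and then bounds $\binom{m}{t/2}\binom{h-1+t/2}{t/2}\le\binom{m}{t/2}^2$? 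These are all plausible but the precise constant-chasing is exactly what needs to be done carefully; I expect that, as in Lemma~\ref{lemma:be}, applying the Stirling bound \eqref{eq:factorial} directly to $(t/2)!$ and the classical $\binom{n}{k}\le(ne/k)^k$ bound to the binomials, everything collapses to the claimed $\sqrt2\,\binom{m}{t}(2t/(e\tau))^{t/2}$, and the role of the hypothesis $m\le\epsilon(\cH)$ is purely to guarantee $M(e)\subseteq e$ so the walk is well-defined throughout.
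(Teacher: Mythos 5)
Your overall plan is the paper's: couple $\bsig$ with the resampling sequence $\bsig_0,\dots,\bsig_m$, use that these are identically distributed to reduce to bounding $\pr{\bS_m\le h-1}$, and observe that each resampling step decreases $\bS$ with probability at most $\bS_{i-1}/\tau$ so that $(\bS_i)_{i\le m}$ falls under the Markov-chain estimate of Lemma~\ref{lemma:be}. That part is right, and so is your reading of the hypothesis $m\le\epsilon(\cH)$ (it only guarantees that an $m$-subset $M(e)\subseteq e$ exists).

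The gap is in the final quantitative step, which is the entire content of the bound, and as written you do not establish it. You set out to re-derive the counting argument inside Lemma~\ref{lemma:be} with the shifted level $h-1$, but the derivation is never completed (``I expect \dots everything collapses''), and the intermediate inequalities you float are partly false: $\binom{m}{t/2}\le\binom{m}{t}$ fails whenever $t>m/2$ (e.g.\ already in the case $h=1$, $t=m$ that you cite as a sanity check), and bounding $\prod_{j=1}^{t/2}(h-1+j)$ crudely by $m^{t/2}$ and pairing it with $\binom{m}{t/2}$ does not visibly yield $\binom{m}{t}\bigl(\tfrac{2t}{e\tau}\bigr)^{t/2}$. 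The whole ``main obstacle'' you describe is in fact a non-issue: Lemma~\ref{lemma:be} is already stated for an arbitrary level $0\le k\le n$, so you should apply it as a black box with $n\coloneqq m$ and $k\coloneqq h-1$, which gives $\pr{\bS_m\le h-1}\le \sqrt{2}\,\binom{m}{h-1}\bigl(\tfrac{2(m-h+1)}{e\tau}\bigr)^{(m-h+1)/2}$, and this is exactly the claimed bound since $m-h+1=t$ and $\binom{m}{h-1}=\binom{m}{t}$ by symmetry of the binomial coefficient. All the parity and Stirling bookkeeping via \eqref{eq:factorial} lives inside the proof of Lemma~\ref{lemma:be}, not in its application, so re-opening it here only creates the difficulties you ran into; with that one-line substitution, the rest of your argument (the identical-distribution transfer back to $\bsig$, as in Lemma~\ref{lem:badevent}) goes through unchanged.
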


\begin{proof}
   % Fix an integer $m\le \deg(v)$, and let $M=\{u_1, \ldots, u_m\}$ be a subset of $N(v)$ of size $m$.
    Let $e\in E(\cH)$, and let $M(e)=\{u_1, \ldots, u_m\}$ be a fixed subset of $m$ vertices in $e$. Let $\sigma_0$ be a possible realisation of $\restrict{\bsig}{V(G)\setminus M(e)}$.
    Let $\bsig_0$ be drawn uniformly at random from the extensions of $\sigma_0$ to $\C(G)$.
    For every $1\le i \le m$, we let $\bsig_i \in \C(G)$ be obtained from $\bsig_{i-1}$ by resampling the colour of $u_i$ uniformly at random from $L_{\bsig_{i-1}}(u_i)$.
    For every $i\le m$, let $\bS_i$ be the number of odd colours of $e$ in $\bsig_i$.
    For every $i\ge 1$, we have $\bS_i = \bS_{i-1}-1$ if $\bsig_i(u_i)$ is one of the $\bS_{i-1}$ odd colours of $e$ in $\bsig_{i-1}$; since there are at least $\tau$ choices for $\bsig_i(u_i)$ this happens with probability at most $\frac{h-1}{\tau}$ if $\bS_{i-1}=h-1$. Otherwise, we have $\bS_i = \bS_{i-1}+1$. So the sequence $(\bS_i)_{i\le m}$ satisfies the hypotheses of Lemma~\ref{lemma:be}, which yields 
    \[ \pr{B_{\bsig_m}(e)}=\pr{\bS_m \le h-1} \le \sqrt{2}\, \binom{m}{h-1}\pth{\frac{2(m-h+1)}{e\tau}}^{\frac{m-h+1}{2}} = \sqrt{2}\,\binom{m}{t}\pth{\frac{2t}{e\tau}}^{t/2}.\]

    Since we resample the colours uniformly at random, the random colourings $(\bsig_i)_{i\le m}$ are identically distributed. Therefore, if $\bsig$ is drawn uniformly at random from $\C(G)$, we have 
    \[\pr{B_\bsig(e) ~\middle|~ \restrict{\bsig}{V(G)\setminus M(e)} = \sigma_0} = \pr{B_{\bsig_0}(e)} = \pr{B_{\bsig_m}(e)},\]
    and the conclusion follows.
\end{proof}

\begin{thm}\label{thm:rodd}
    Let $(G,\cH)$ be a graph-hypergraph pair of $\Delta(\cH)\ge 49$, and let $h$ be a given integer. 
   If there exists a subset of vertices $S\subseteq V(\cH)$ such that $\min\{h-1,\epsilon(\cH[S])- h+1\} \ge 2(\ln \Delta(\cH)+\ln \ln \Delta(\cH)+3)$, 
    then $(G,\cH)$ has an $h$-odd $k$-colouring, where 
    \[ k \le 
    \begin{cases}
    \chi(G\setminus S) + \Delta(G[S]) + 32(h-1) & \text{if $\epsilon(\cH[S])\ge 2(h-1)$;}\\

     \chi(G\setminus S) + \Delta(G[S]) + 2e^2\, \frac{\epsilon(\cH[S])^{2+1/\ln\Delta(\cH)}}{\epsilon(\cH[S])-h+1}  & \text{otherwise.}
     \end{cases}\]
\end{thm}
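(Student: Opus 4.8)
The plan is to run the argument of \cref{thm:chi-bound,thm:chi-bound2} almost verbatim, but feeding it the $h$-odd analogue \cref{lem:hypergraphbadevent} of \cref{lem:badevent} and choosing the resampling parameters according to the size of $\epsilon(\cH[S])$ relative to $h$. Write $G_0\coloneqq G\setminus S$, fix a proper $\chi(G_0)$-colouring $\sigma_0$ of $G_0$, set $k\coloneqq\chi(G_0)+\Delta(G[S])+\eta$ with $\eta$ the additive term in the target bound, and let $\bsig$ be a uniformly random proper $k$-colouring of $G$ that extends $\sigma_0$. Such a colouring exists because every $v\in S$ sees at most $\Delta(G[S])+\chi(G_0)$ colours on its neighbourhood, and the same count shows $|L_\bsig(v)|\ge\eta$ deterministically for every $v\in S$. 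For $e\in E(\cH)$ let $B_\bsig(e)$ be the bad event that $e$ has fewer than $h$ odd colours in $\bsig$; since $\epsilon(\cH)\ge\epsilon(\cH[S])\ge h$ by hypothesis, a colouring avoiding every $B_\bsig(e)$ is exactly an $h$-odd $k$-colouring of $(G,\cH)$, so it suffices to show that with positive probability none of these events occurs.

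I would then split into the two regimes of the statement. If $\epsilon(\cH[S])\ge 2(h-1)$, take $t\coloneqq h-1$, so $m\coloneqq h-1+t=2(h-1)\le\epsilon(\cH[S])$; otherwise take $t\coloneqq\epsilon(\cH[S])-h+1$, so $m=\epsilon(\cH[S])$. In both cases $m\le\epsilon(\cH[S])$, and the hypothesis $\min\{h-1,\epsilon(\cH[S])-h+1\}\ge 2(\ln\Delta(\cH)+\ln\ln\Delta(\cH)+3)$ yields $t\ge 2(\ln\Delta(\cH)+\ln\ln\Delta(\cH)+3)\ge 2\ln\Delta(\cH)$. For each $e$ pick $M(e)\subseteq e\cap S$ with $|M(e)|=m$ (possible since $|e\cap S|\ge\epsilon(\cH[S])\ge m$), and apply \cref{lem:hypergraphbadevent} with $\tau=\eta$ (legitimate because $M(e)\subseteq S$ and only vertices of $M(e)$ get resampled): for every realisation $\sigma$ of $\restrict{\bsig}{V(G)\setminus M(e)}$ it gives $\pr{B_\bsig(e)\mid\restrict{\bsig}{V(G)\setminus M(e)}=\sigma}\le\sqrt2\binom{m}{t}\pth{\frac{2t}{e\eta}}^{t/2}$; write $p$ for this upper bound. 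I then apply \cref{lemma:lll} with $\Gamma(e)\coloneqq\{e'\in E(\cH):e'\cap M(e)\neq\emptyset\}$, of size at most $m\Delta(\cH)$, the conditioning being handled exactly as in the proof of \cref{thm:chi-bound2}; what remains is to check $e\,p\,m\Delta(\cH)\le 1$.

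This is where the two values of $\eta$ enter. In the first regime I take $\eta\coloneqq 32(h-1)$: substituting $t=h-1$ and using $\binom{2(h-1)}{h-1}\le 4^{h-1}/\sqrt{\pi(h-1)}$, the constant $32$ is exactly what cancels the $4^{h-1}$, leaving $p\le\frac{\sqrt2}{\sqrt{\pi(h-1)}}e^{-(h-1)/2}$; then the left-hand side of $e\,p\,m\Delta(\cH)\le 1$ is proportional to $\sqrt{h-1}\,e^{-(h-1)/2}$, which is decreasing in $h-1$, so it is enough to verify the inequality at $h-1=2(\ln\Delta(\cH)+\ln\ln\Delta(\cH)+3)$, where it holds for every $\Delta(\cH)\ge 49$. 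In the second regime I take $\eta\coloneqq 2e^2\,\epsilon(\cH[S])^{2+1/\ln\Delta(\cH)}/(\epsilon(\cH[S])-h+1)$: using $\binom{m}{t}\le(em/t)^t$ and $m=\epsilon(\cH[S])$ one gets $p\le\sqrt2\,e^{-t/2}m^{-t/(2\ln\Delta(\cH))}\le\frac{\sqrt2}{m}e^{-t/2}$, the last step using $t\ge 2\ln\Delta(\cH)$ --- the sole purpose of the extra factor $\epsilon(\cH[S])^{1/\ln\Delta(\cH)}$ inside $\eta$ is to convert the residual $m^{-t/(2\ln\Delta(\cH))}$ into a harmless $1/m$ --- and then $e\,p\,m\Delta(\cH)\le 1$ reduces to $t/2\ge\ln\Delta(\cH)+1+\tfrac12\ln 2$, which follows from $t/2\ge\ln\Delta(\cH)+\ln\ln\Delta(\cH)+3$ together with $\Delta(\cH)\ge 49$. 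Hence with positive probability no $B_\bsig(e)$ occurs and the resulting $\bsig$ is the desired colouring.

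The probabilistic skeleton being identical to that of \cref{thm:chi-bound,thm:chi-bound2}, I expect the only real difficulty to lie in the parameter tuning and the ensuing numerical verification of $epd\le 1$. Two points want care: in the ``large $\epsilon$'' regime, balancing $t=h-1$ forces the precise constant $32$ so that the central binomial coefficient $\binom{2(h-1)}{h-1}\approx 4^{h-1}$ is cancelled exactly, and one must not lose the $1/\sqrt{\pi(h-1)}$ factor (it is what lets $\Delta(\cH)=49$ work); in the ``small $\epsilon$'' regime the binomial factor is genuinely $\binom{\epsilon(\cH[S])}{h-1}$, which can be as large as $2^{\epsilon(\cH[S])}$, so $\eta$ must be inflated by the mild factor $\epsilon(\cH[S])^{1/\ln\Delta(\cH)}$. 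The monotonicity reduction to the threshold value of $h$ in the first regime is what makes the single explicit constant $49$ suffice uniformly over all admissible $h$.
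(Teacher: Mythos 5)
Your proposal is correct and follows essentially the same route as the paper's proof: the same random colouring (an optimal proper colouring frozen on $G\setminus S$, plus $\Delta(G[S])+\eta$ extra colours on $S$), the same choice of $t$, $m$, $M(e)$ and $\Gamma(e)$, and the same combination of Lemma~\ref{lem:hypergraphbadevent} with the lopsided Local Lemma. The only difference is bookkeeping: the paper sets $\eta \coloneqq 2t\pth{\frac{m}{t}\binom{m}{t}}^{2/t}$ once and then bounds it by $32(h-1)$ or $2e^2\,\epsilon(\cH[S])^{2+1/\ln\Delta(\cH)}/(\epsilon(\cH[S])-h+1)$ in the two regimes, whereas you plug these final values in directly and verify $epd\le 1$ with equivalent (and correct) estimates.
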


\begin{proof}
    Let $k_0 \coloneqq \chi(G\setminus S)$ and let $\sigma_0$ be a proper $k_0$-colouring of $G\setminus S$.
    Fix $k \coloneqq k_0 + \Delta(G[S]) + \eta$, for some integer $\eta\geq 1$ whose precise value will be determined later in the proof, and let $\bsig$ be a uniformly random proper $k$-colouring of $G$ that satisfies $\restrict{\bsig}{G\setminus S}=\sigma_0$. 
    For every edge $e\in E(\cH)$, we let $B_{\bsig}(e)$ be the (bad) random event that $e$ contains less than $h$ odd colours in $\bsig$. Let us show that, with non-zero probability, no event $B_{\bsig}(e)$ occurs.

    Let us write $t\coloneqq \min \{ \epsilon(\cH[S])-h+1, h-1 \}$ and $m\coloneqq h-1+t$. Let $e \in E(\cH)$ and $M(e) = \{v_1,v_2,\ldots, v_m\} \subseteq e\cap S$ be a subset of $m$ vertices in $e$.
    Let us recolour the vertices in $M(e)$ in turn with a uniformly random available colour. Each time we recolour $v_i$, the neighbours of $v_i$ in $S$ forbid at most $\deg_S(v_i) \le \Delta(G[S])$ colours, and the neighbours of $v_i$ not in $S$ forbid at most $k_0 = \chi(G\setminus S)$ colours (these colours are fixed by $\sigma_0$). So there are at least $\eta$ available colours for $v_i$. In particular, we have $|L_{\bsig}(v)|\geq\eta$ for each $v\in M(e)$.

    %Let us write $t\coloneqq \max\{h-1, \ceil{2(\ln \Delta(\cH)+\ln\ln\Delta(\cH)+3)}\}$, and let us fix $m\coloneqq h-1+t$. Let $e \in E(\cH)$ and $M(e) = \{v_1,v_2,\ldots, v_m\} \subseteq e\cap S$ be a subset of $m$ vertices in $e$.  Let us recolour the vertices in $M(e)$ in turn with a uniformly random available colour. Each time we recolour $v_i$, the neighbours of $v_i$ in $S$ forbid at most $\deg_S(v_i) \le \Delta(G[S])$ colours, and the neighbours of $v_i$ not in $S$ forbid at most $k_0 = \chi(G\setminus S)$ colours (these colours are fixed by $\sigma_0$). So there are at least $\eta$ available colours for $v_i$. In particular, we have $|L_{\bsig}(v)|\geq\eta$ for each $v\in M(e)$. 

    We apply Lemma~\ref{lemma:lll} with $\Gamma(e) \coloneqq  \{ e' \in E(\cH) : e' \cap M(e) \neq \emptyset\}$ for every edge $e\in E(\cH)$, and obtain that, with non-zero probability, none of the events $B_{\bsig}(e)$ occurs. The size of $\Gamma(e)$ is at most $m\Delta(\cH)$.  Let $\Sigma_0$ be the set of possible realisations of $\restrict{\bsig}{V(G) \setminus M(e)}$ such that no event $B_\bsig(e')$ occurs for $e' \notin \Gamma(e)$. For every $Z\subseteq E(\cH) \setminus \Gamma(e)$, we have
\[\pr{B_{\bsig}(e) \midbar \bigcap_{e' \in Z} \overline{B_{\bsig}(e')}} \le \sup_{\sigma_0 \in \Sigma_0}\pr{B_{\bsig}(e) \midbar \restrict{\bsig}{V(G)\setminus M(e)}=\sigma_0} \le \sqrt{2} \binom{m}{t}\pth{\frac{2t}{e\eta}}^{\frac{t}{2}},\]
 by Lemma~\ref{lem:hypergraphbadevent} applied to the graph $G$ with $\C(G)$ being the set of proper $k$-colourings of $G$. 

% Let us fix $\eta \coloneqq 2t\, (\frac{m}{t}\times \binom{m}{t})^{2/t}$, so that the above probability is at most $\sqrt{2}\frac{t}{m}e^{-t/2}$. We can apply Lemma~\ref{lemma:lll} if this is at most $\frac{1}{em\Delta(\cH)}$, which is equivalent to 
% \[ \sqrt{2}e^{-t/2} \le \frac{1}{et\Delta(\cH)}.\]
% As in the proof of Lemma~\ref{lemma:admissible}, this holds by fixing $t\coloneqq \ceil{-2 W_{-1}\pth{-\frac{1}{2\sqrt{2}e\Delta(\cH)}}} \le \ceil{2(\ln \Delta(\cH)+\ln\ln\Delta(\cH)+3}$ when $\Delta(\cH)\ge 49$.
% If $t=h-1$ then 
% \begin{align*}
%      \eta &= 2(h-1)\pth{2\binom{2h-2}{h-1}}^{\frac{2}{h-1}} \le 2(h-1)\pth{2^{2h-2}}^{\frac{2}{h-1}} = 32(h-1),
% \end{align*}
% and otherwise we have $m > 2t$ and
% \begin{align*}
%     \eta &= 2t\, \pth{\frac{m}{t}\times \binom{m}{t}}^\frac{2}{t} \le 2(h-1)\, (\frac{m}{t})^{\frac{2}{t}}\times (\frac{me}{t})^2\le 2e^2(h-1)(\frac{m}{t})^{2+\frac{2}{t}}
%  \end{align*}
% % his holds when $t\ge 2(\ln\Delta(\cH)+\ln\ln\Delta(\cH)+4)$ and $\Delta(\cH)$ is large enough.

% \todo[inline]{$m$ now can be arbitrarily larger than $t$}
Let us fix $\eta \coloneqq 2t\, (\frac{m}{t}\times\binom{m}{t})^{2/t}$, so that the above probability is at most $\sqrt{2}\frac{t}{m}e^{-t/2}$. We can apply Lemma~\ref{lemma:lll} if this is at most $\frac{1}{em\Delta(\cH)}$, which is equivalent to 
 \[ \sqrt{2}e^{-t/2} \le \frac{1}{et\Delta(\cH)}.\]
As in the proof of Lemma~\ref{lemma:admissible}, this holds whenever $t\ge \ceil{-2 W_{-1}\pth{-\frac{1}{2\sqrt{2}e\Delta(\cH)}}}$. Since by hypothesis we have $\Delta(\cH) \ge 49$ and $t \ge \ceil{2(\ln \Delta(\cH) + \ln \ln \Delta(\cH) + 3)}$, this inequality is verified.

% We can apply Lemma~\ref{lemma:lll} if this is at most $\frac{1}{em\Delta(\cH)}$. This holds when $t\times \ln(em)\ge 2\ln(\sqrt{2}em\Delta(\cH))$, which is guaranteed by the definition of $t$ and $m$.

If $t=h-1$ then 
   \begin{align*}
      \eta &= 2(h-1)\pth{2\binom{2h-2}{h-1}}^{\frac{2}{h-1}} \le 2(h-1)\pth{2^{2h-2}}^{\frac{2}{h-1}} = 32(h-1).
 \end{align*}
Otherwise, we have $m = \epsilon(\cH[S]) > 2t$, and
    \[\eta = 2t\,\pth{\frac{m}{t}\binom{m}{t}}^\frac{2}{t} \le 2t\pth{\frac{m}{t}}^{\frac{2}{t}} \pth{\frac{me}{t}}^2\le2e^2\, \frac{m^{2+\frac{1}{\ln\Delta(\cH)}}}{m-h+1}.\]

This proves the existence of a proper $k$-colouring $\sigma$ of $(G,\cH)$ such that every edge has at least $h$ odd colours in $\sigma$, as desired.

\end{proof}

\rk The second bound of Theorem~\ref{thm:rodd} is at most $\chi(G\setminus S) + \Delta(G[S]) + 2e^3\, \frac{\delta(G[S])^{2}}{\delta(G[S])-h+1}$ when $\cH$ is the neighbourhood-hypergraph of $G$.

The following corollary can be derived from Theorem~\ref{thm:rodd} by setting $S\coloneqq V(G)$. 
\begin{cor}
%\label{cor:h-odd-delta}
    Let $(G,\cH)$ be a graph-hypergraph pair of $\Delta(\cH)\ge 49$, and let $h$ be a given integer. 
   Let us assume that $\min\{h-1,\epsilon(\cH)- h+1\} \ge 2(\ln \Delta(\cH)+\ln \ln \Delta(\cH)+3)$.  If $\epsilon(\cH)\ge 2(h-1)$, then there exists an $h$-odd $(\Delta(G) +32(h-1))$-colouring of $(G,\cH)$; Otherwise, $(G,\cH)$ admits an $h$-odd $(\Delta(G) + 2e^2\, \frac{\epsilon(\cH)^{2+1/\ln\Delta(\cH)}}{\epsilon(\cH)-h+1})$-colouring.
 
\end{cor}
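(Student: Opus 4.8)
The plan is to derive this corollary as a direct instance of \cref{thm:rodd} by taking $S \coloneqq V(G)$, so that $G\setminus S$ is the empty graph and $G[S] = G$. First I would record that $\chi(G\setminus S) = \chi(\emptyset) = 0$ and $\Delta(G[S]) = \Delta(G)$, and that $\epsilon(\cH[S]) = \epsilon(\cH[V(G)]) = \epsilon(\cH)$ since inducing on the whole vertex set does not change any edge. With these substitutions, the hypothesis $\min\{h-1, \epsilon(\cH[S]) - h + 1\} \ge 2(\ln\Delta(\cH) + \ln\ln\Delta(\cH) + 3)$ of \cref{thm:rodd} becomes exactly the hypothesis $\min\{h-1, \epsilon(\cH) - h + 1\} \ge 2(\ln\Delta(\cH) + \ln\ln\Delta(\cH) + 3)$ stated in the corollary, and the condition $\Delta(\cH) \ge 49$ carries over verbatim.

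Next I would split into the same two cases as in \cref{thm:rodd}. In the case $\epsilon(\cH) = \epsilon(\cH[S]) \ge 2(h-1)$, the theorem gives an $h$-odd $k$-colouring of $(G,\cH)$ with $k \le \chi(G\setminus S) + \Delta(G[S]) + 32(h-1) = 0 + \Delta(G) + 32(h-1)$, which is precisely the claimed bound $\Delta(G) + 32(h-1)$. In the complementary case $\epsilon(\cH) < 2(h-1)$, the theorem yields $k \le \chi(G\setminus S) + \Delta(G[S]) + 2e^2\,\frac{\epsilon(\cH[S])^{2+1/\ln\Delta(\cH)}}{\epsilon(\cH[S]) - h + 1} = \Delta(G) + 2e^2\,\frac{\epsilon(\cH)^{2+1/\ln\Delta(\cH)}}{\epsilon(\cH) - h + 1}$, matching the second stated bound. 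In both cases a colouring with the required number of colours exists, so $\odd^h(G,\cH) \le k$ and the corollary follows.

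There is essentially no obstacle here: the only point requiring a moment's care is the convention that $\chi$ of the empty graph is $0$ (equivalently, that the term $\chi(G\setminus S)$ simply vanishes when $S = V(G)$), which is already implicitly used elsewhere in the paper when \cref{thm:chi-bound} and \cref{thm:chi-bound2} are specialised with $S = V(G)$. One should also note that when $\cH$ is the neighbourhood-hypergraph of $G$ this recovers a statement purely about $\odd^h(G)$, but the corollary as stated is already at the level of a general pair $(G,\cH)$, so no further work is needed.
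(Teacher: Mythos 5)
Your proposal is correct and is exactly the paper's intended derivation: the paper obtains this corollary by setting $S\coloneqq V(G)$ in Theorem~\ref{thm:rodd}, just as you do, with the same substitutions $\chi(G\setminus S)=0$, $\Delta(G[S])=\Delta(G)$, and $\epsilon(\cH[S])=\epsilon(\cH)$. Nothing further is needed.
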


We also extend Theorem~\ref{thm:chi-bound2} to $h$-odd colourings.

\begin{thm}
    \label{thm:chi-bound2-h} Let $(G,\cH)$ be a graph-hypergraph pair of $\Delta(\cH)\ge 49$, and let $h$ be a given integer.
    If  there exists a subset of vertices $S\subseteq V(\cH)$ such that $\min\{h-1,\epsilon(\cH[S])- h+1\} \ge 2(\ln \Delta(\cH)+\ln \ln \Delta(\cH)+3)$,  then $(G,\cH)$ has an $h$-odd $k$-colouring, 
    where 
    \[ k \le 
    \begin{cases}
    \chi(G\setminus S) + 32(h-1)\, \chi(G[S]) & \text{if $\epsilon(\cH[S])\ge 2(h-1)$;}\\

     \chi(G\setminus S) + 2e^2\, \frac{\epsilon(\cH[S])^{2+1/\ln\Delta(\cH)}}{\epsilon(\cH[S])-h+1}\, \chi(G[S])  & \text{otherwise.}
     \end{cases}\]
\end{thm}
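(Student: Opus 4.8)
The plan is to fuse the product-colouring construction from the proof of \cref{thm:chi-bound2} with the $h$-odd resampling analysis from the proof of \cref{thm:rodd}. Writing $G_0\coloneqq G\setminus S$, $G_1\coloneqq G[S]$, $k_i\coloneqq\chi(G_i)$, and fixing proper $k_i$-colourings $\sigma_i$ of $G_i$, I would define a random proper colouring $\bsig$ of $G$ on the colour set $[k_0]\sqcup([k_1]\times[\eta])$ (so that $k=k_0+\eta k_1$) by setting $\bsig(v)\coloneqq\sigma_0(v)$ for $v\notin S$ and $\bsig(v)\coloneqq(\sigma_1(v),\bx_v)$ for $v\in S$, where the $\bx_v$ are independent and uniform on $[\eta]$; the integer $\eta\ge1$ will be pinned down at the end. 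As in \cref{thm:chi-bound2}, since $\sigma_1$ is proper the first coordinate already separates adjacent vertices of $S$, so $\bsig$ is proper, and taking $\C(G)$ to be the family of colourings of this shape one gets $L_{\bsig}(v)=\{(\sigma_1(v),x):x\in[\eta]\}$, hence $|L_{\bsig}(v)|=\eta$ for every $v\in S$ \emph{independently of} $\deg_{G_1}(v)$. This last point is exactly the mechanism that will let $\chi(G[S])$ take over the role that $\Delta(G[S])$ played in \cref{thm:rodd}.

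Next I would run the Local Lemma step along the lines of \cref{thm:rodd}. Set $t\coloneqq\min\{\epsilon(\cH[S])-h+1,\,h-1\}$ and $m\coloneqq h-1+t\le\epsilon(\cH[S])$, so that by hypothesis $t\ge\ceil{2(\ln\Delta(\cH)+\ln\ln\Delta(\cH)+3)}$. For each $e\in E(\cH)$ let $M(e)$ be the set of its $m$ smallest vertices in $e\cap S$ (with respect to a fixed ordering of $V(G)$), and let $B_{\bsig}(e)$ be the bad event that $e$ has fewer than $h$ odd colours in $\bsig$. Applying \cref{lem:hypergraphbadevent} with this $\C(G)$ and $\tau\coloneqq\eta$ bounds $\pr{B_{\bsig}(e)\midbar\restrict{\bsig}{V(G)\setminus M(e)}=\sigma'}$ by $\sqrt2\,\binom{m}{t}\pth{\frac{2t}{e\eta}}^{t/2}$ for every realisation $\sigma'$ of $\restrict{\bsig}{V(G)\setminus M(e)}$. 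Fixing $\eta\coloneqq\ceil{2t\,(\frac{m}{t}\binom{m}{t})^{2/t}}$ makes this at most $\sqrt2\,\frac{t}{m}\,e^{-t/2}$. Taking $\Gamma(e)\coloneqq\{e'\in E(\cH):e'\cap M(e)\neq\emptyset\}$, of size at most $m\Delta(\cH)$, the hypothesis of \cref{lemma:lll} reduces — just as in the proofs of \cref{lemma:admissible} and \cref{thm:rodd} — to $\sqrt2\,e^{-t/2}\le\frac{1}{et\Delta(\cH)}$, which holds for $t\ge\ceil{-2W_{-1}(-\frac{1}{2\sqrt2\,e\Delta(\cH)})}$ and is therefore guaranteed by $\Delta(\cH)\ge49$ together with our lower bound on $t$. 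Hence with positive probability no $B_{\bsig}(e)$ occurs, yielding an $h$-odd $k$-colouring of $(G,\cH)$.

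It then remains to estimate $\eta$ in the two regimes, which is the identical computation to the one in \cref{thm:rodd}. If $\epsilon(\cH[S])\ge2(h-1)$ then $t=h-1$ and $m=2(h-1)$, so $\eta\le2(h-1)(2\binom{2h-2}{h-1})^{2/(h-1)}\le2(h-1)(2^{2h-2})^{2/(h-1)}=32(h-1)$. Otherwise $m=\epsilon(\cH[S])>2t$, and $\eta\le2t\,(\frac{m}{t})^{2/t}(\frac{me}{t})^2\le2e^2\,\frac{\epsilon(\cH[S])^{2+1/\ln\Delta(\cH)}}{\epsilon(\cH[S])-h+1}$, using $t\ge2\ln\Delta(\cH)$ for the last inequality. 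Substituting into $k=k_0+\eta k_1=\chi(G\setminus S)+\eta\,\chi(G[S])$ gives the two claimed bounds.

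I do not expect a genuine obstacle, since the statement is essentially a routine hybrid of \cref{thm:chi-bound2} and \cref{thm:rodd}. The one place that needs care is the interaction of the product construction with \cref{lem:hypergraphbadevent}: I must check that resampling a vertex $v\in S$ within $\C(G)$ always leaves exactly $\eta$ legal colours and never alters the first coordinate (so that no properness conflict is ever created), which is what makes $\tau=\eta$ a valid deterministic lower bound on $|L_{\bsig}(v)|$ — crucially one independent of $\deg_{G[S]}(v)$, and this independence is precisely what separates this bound from the one in \cref{thm:rodd}.
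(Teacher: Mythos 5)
Your proposal is correct and follows essentially the same route as the paper's proof: the same product colouring $(\sigma_1(v),\bx_v)$ on $S$ glued to a fixed optimal colouring off $S$, the same choice of $t$, $m$, $M(e)$, $\Gamma(e)$, the same application of Lemma~\ref{lem:hypergraphbadevent} with $\tau=\eta$ followed by the Local Lemma, and the same two-regime estimate of $\eta$. The point you flag for care — that $|L_{\bsig}(v)|=\eta$ deterministically for $v\in S$, independently of $\deg_{G[S]}(v)$ — is exactly the mechanism the paper relies on as well.
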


\begin{proof}
    Let $G_0 \coloneqq G\setminus S$ and $G_1\coloneqq G[S]$. For each $i\in \{0,1\}$, we write $k_i \coloneqq \chi(G_i)$, and we let $\sigma_i$ be a proper $k_i$-colouring of $G_i$. 

    We define a random proper $k$-colouring $\bsig$ of $G$ as follows, where $k=k_0 + \eta k_1$ and $\eta$ is some positive integer whose precise value will be determined later in the proof. For every $v\in S$, draw some random value $\bx_v$ uniformly at random from $[\eta]$, and let $\bsig(v) \coloneqq (\sigma_1(v), \bx_v)$. For every $v\notin S$, let $\bsig(v) \coloneqq \sigma_0(v)$.
    Let us order the vertices in $V(G)$ arbitrarily.
    For every $e\in E(\cH)$, we let $B_\bsig(e)$ be the random (bad) event that $e$ contains less than $h$ odd colours in $\bsig$. Let us write $t\coloneqq \min \{ \epsilon(\cH[S])-h+1, h-1 \}$, and let us fix $m\coloneqq h-1+t$. We let $M(e)$ contain the smallest $m$ vertices of $e\cap S$. Let $\sigma$ be a possible realisation of $\restrict{\bsig}{V(G)\setminus M(e)}$. By construction, for every $v\in S$, there are $\eta$ choices in $L_\bsig(v)$. 

    For an edge $e'\in E(\cH)$, the outcome of $B_\bsig(e')$ is entirely determined by the realisation of $\restrict{\bsig}{e'}$. So if we fix the realisation of $\bsig$ outside of $M(e)$, we in particular fix the outcomes of all events $B_\bsig(e')$ such that $M(e) \cap e' = \emptyset$. So we set $\Gamma(e) \coloneqq \{e' : e' \cap M(e) \neq \emptyset\}$, and observe that these sets have size at most $m\Delta(\cH)$. Let $\Sigma_0$ be the set of possible realisations of $\restrict{\bsig}{V(G) \setminus M(e)}$ such that no event $B_\bsig(e')$ occurs for $e' \notin \Gamma(e)$. Hence we may apply Lemma~\ref{lem:hypergraphbadevent} and obtain that for every $Z\subseteq E(\cH) \setminus \Gamma(e)$, we have 
    \[\pr{B_{\bsig}(e) \midbar \bigcap_{e' \in Z} \overline{B_{\bsig}(e')}} \le \sup_{\sigma_0 \in \Sigma_0}\pr{B_{\bsig}(e) \midbar \restrict{\bsig}{V(G)\setminus M(e)}=\sigma_0} \le \sqrt{2} \binom{m}{t}\pth{\frac{2t}{e\eta}}^{\frac{t}{2}}.\]

     As explained in the proof of Theorem~\ref{thm:rodd}, this is at most $\frac{1}{em\Delta(\cH)}$ by fixing $\eta \coloneqq 2t\, (\frac{m}{t}\times\binom{m}{t})^{2/t}$ when $\Delta(\cH)\ge49$. We then have $\eta\le 32(h-1)$ if $\epsilon(\cH[S])\ge 2(h-1)$; otherwise $\eta \le 2e^2\, \frac{\epsilon(\cH[S])^{2+1/\ln\Delta(\cH)}}{\epsilon(\cH[S])-h+1}$. We may now apply Lemma~\ref{lemma:lll} to the bad events $(B_\bsig(e))$, with that definition of $\Gamma(e)$, and conclude that with positive probability, no event $B_\bsig(e)$ occurs. So there is a realisation of $\bsig$ that is an $h$-odd $k$-colouring of $(G,\cH)$. This concludes the proof.
\end{proof}

\subsection{Constructions}
In the current section, we have derived upper bound for $\odd^h(G,\cH)$ given an integer $h$ and a graph-hypergraph pair $(G,\cH)$ that satisfies that $\epsilon(\cH)-h$ is sufficiently large. We now show that, if no such restriction holds, then the bound obtained by a greedy colouring may be close to best possible.

We propose a construction that relies on the existence of Steiner $2$-designs, which was proven in \cite{Wil75}. Given an integer $q$, a Steiner $2$-design on $[q]$ is a collection of sets of uniform size $k$ (that we call \emph{blocks}), such that every pair of vertices from $[q]$ is contained in exactly one set. We denote it $S(2,k;q)$. Among its many properties, it must contain exactly $\binom{q}{2}/\binom{k}{2}$ sets, and each vertex is contained in exactly $\frac{q-1}{k-1}$ sets.

\begin{prop}
    For every integer $h\ge 1$, there exists a family of graphs $G$ of increasing maximum degree $\Delta$ and of minimum degree $h+1$, such that 
    \[ \odd^h(G) \ge h\Delta+1. \]
\end{prop}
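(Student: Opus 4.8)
The plan is to construct, for each $h \geq 1$, a graph $G$ built around a Steiner $2$-design, engineered so that every colour can be an odd colour of at most one "large" edge at a time, forcing $h\Delta$ colours. Fix a prime power $q$ large, set $k \coloneqq h+1$, and take a Steiner system $S(2,k;q)$ with blocks $B_1, \ldots, B_b$ where $b = \binom{q}{2}/\binom{k}{2}$. Let the ground set $[q]$ be the vertex set of a clique $K_q$ (this will force many colours to appear). For each block $B_j$, add a new vertex $v_j$ whose neighbourhood is exactly $B_j$; since $|B_j| = k = h+1$, the vertex $v_j$ has degree $h+1$, which is the intended minimum degree. Every vertex $i \in [q]$ has degree $(q-1) + \frac{q-1}{k-1}$ in $G$, since it lies in $q-1$ clique-edges and $\frac{q-1}{k-1}$ blocks; this is the maximum degree $\Delta$.

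First I would observe that in any proper colouring $\sigma$ of $G$, the clique $K_q$ is rainbow, so $\sigma$ uses at least $q$ colours on $[q]$. I would then argue that to make $\sigma$ an $h$-odd colouring, each pendant vertex $v_j$ needs $\min\{h, |B_j|\} = \min\{h, h+1\} = h$ odd colours in $N(v_j) = B_j$; since $|B_j| = h+1$, this means that at most one colour in $B_j$ may fail to be odd, i.e. at most one pair of vertices in $B_j$ can be monochromatic. But each block of $S(2,k;q)$ has $\binom{k}{2} = \binom{h+1}{2}$ pairs, and two vertices of $[q]$ lie together in exactly one block; so if two vertices $i, i'$ of $[q]$ receive the same colour under $\sigma$, that "kills" a pair in the unique block containing them, and since they are adjacent in $K_q$ this is actually impossible — wait, they are adjacent in the clique, so $\sigma(i)\ne\sigma(i')$ always. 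So in fact every colour appears exactly once on $[q]$ under $\sigma$, hence $|\sigma([q])| = q$; I would then need a mechanism that actually forces more colours. The right fix is to replace each clique-vertex $i$ by a group $X_i$ of $h$ vertices forming an independent set (or more generally to use $h$ "parallel" copies), and make the blocks select one vertex from certain groups; alternatively — and this is cleaner — I would not use a clique at all on $[q]$ but instead let each pendant block-vertex see a set of $h+1$ vertices with the rule that within each block, being forced to have $h$ odd colours among $h+1$ vertices leaves essentially no slack, and stack $h$ disjoint such block-gadgets on a common core so that the colours used by different gadgets must be disjoint.

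The key steps, in order, would be: (1) set up the Steiner-design-based core and pendant vertices, choosing the group sizes so that the minimum degree is exactly $h+1$ and the maximum degree is $\Delta = \Theta(q)$; (2) show the lower bound $\odd^h(G) \ge h\Delta + 1$ by a counting/pigeonhole argument — count, over all pendant vertices $v_j$, the number of (colour, block) incidences where the colour is \emph{not} odd for $v_j$, show this number is small because of the near-rainbow constraint forced by the $h$-odd condition, and conclude that the total number of colours used must be at least $h\Delta + 1$; (3) verify the degree bounds numerically and let $q \to \infty$ to get the promised family of increasing maximum degree. The main obstacle I expect is step (2): getting the constant exactly $h$ in front of $\Delta$ (rather than, say, $h$ up to lower-order terms) requires the combinatorial design to be tuned precisely so that each colour class "wastes" exactly the right amount, and making the minimum degree equal to $h+1$ \emph{simultaneously} with $\Delta = h\cdot(\text{something}) + 1$ colours being necessary is delicate. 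I would likely need to iterate $h$ disjoint copies of a tight block-structure sharing a common high-degree core, so that a colour usable as an odd colour in one copy is forbidden in the others, multiplying the colour requirement by $h$; the bookkeeping to confirm that this forces exactly $h\Delta+1$ and not merely $\Omega(h\Delta)$ is where the real work lies.
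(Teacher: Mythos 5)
Your concrete construction does not work, and the fix you gesture at is never carried out. Putting a clique $K_q$ on the ground set $[q]$ inflates the maximum degree to $\Delta = (q-1)+\frac{q-1}{h}$, while the only colour count you can extract from the rainbow clique is $q$; that gives roughly $\frac{h}{h+1}\Delta$ colours, nowhere near $h\Delta+1$. You notice this yourself mid-proof, but the replacement ideas (groups of $h$ parallel copies, ``stacking $h$ disjoint block-gadgets on a common core'') are left as sketches, and you explicitly concede that the bookkeeping forcing exactly $h\Delta+1$ rather than $\Omega(h\Delta)$ is ``where the real work lies.'' So the argument, as written, establishes neither the degree bookkeeping nor the colour lower bound.

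The missing idea is that no clique (and no extra gadgetry) is needed at all: take $G$ to be the bipartite incidence graph of $S(2,h+1;q)$, with ground-set side $X$ of size $q$ and one block-vertex per block. Each block-vertex $y$ has degree $h+1$, so in an $h$-odd colouring $N(y)$ must contain $h$ odd colours; if any colour repeated in $N(y)$, at most $h-1$ colours of $N(y)$ could be odd, so every block is rainbow. Since the design covers every pair of $X$, all of $X$ is rainbow, forcing at least $q$ colours. Crucially, because there are no edges inside $X$, each $x\in X$ has degree exactly $\frac{q-1}{h}=\Delta$, so $q = h\Delta+1$, and $\delta = h+1$ comes from the block-vertices. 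The ``mechanism that forces more colours'' you were searching for is exactly the $h$-odd constraint on the degree-$(h+1)$ block-vertices combined with pair-coverage; adjacency inside $X$ is not only unnecessary but harmful, since it ruins the ratio between $q$ and $\Delta$.
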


\begin{proof}
    By \cite[Theorem~2.1]{Wil75}, we may find an arbitrarily large integer $q$ such that a Steiner $2$-design $S(2,h+1;q)$ exists. We let $H=(X,Y,E)$ be its (bipartite) incidence graph. We have $|X|=q$, $|Y|=\binom{q}{2}/\binom{h+1}{2}$, every vertex $y\in Y$ has degree $h+1$, every vertex $x\in X$ has degree $\frac{q-1}{h}$, and every pair of vertices $x,x'\in X$ is contained in the neighbourhood of some vertex $y\in Y$. In particular, $\delta(H)=h+1$ and $\Delta(H)=\frac{q-1}{h}$.
    
    Let $\sigma$ be an $h$-odd $k$-colouring of $H$, where $k=\odd^h(H)$. So every neighbourhood of size $h+1$ must be rainbow in $\sigma$ (i.e. contains no pair of vertices with the same colour), otherwise the number of odd colours in this neighbourhood is at most $h-1$. Since every pair of vertices from $X$ is contained in a neighbourhood of size $h+1$, we infer that $X$ is rainbow, and so $k\ge |X|=q=h\Delta(H)+1$.

\end{proof}

One interesting special case of Theorem~\ref{thm:rodd} is the following.

\begin{cor}
    \label{cor:h-odd-delta}
    Let $G$ be a $\Delta$-regular graph, let $\lceil 2(\ln \Delta +\ln \ln \Delta + 3\rceil \le t\le \Delta$ be a given integer, and let $h\coloneqq \Delta+1-t$. Then
    \[\odd^h(G) = O\pth{\frac{\Delta^2}{t}}.\]
\end{cor}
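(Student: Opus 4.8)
The plan is to derive \cref{cor:h-odd-delta} directly from \cref{thm:rodd}, taking $\cH$ to be the neighbourhood-hypergraph of $G$ and $S = V(G)$, so that no passage to a proper induced subgraph is needed. Since $G$ is $\Delta$-regular, every neighbourhood has size exactly $\Delta$, hence $\epsilon(\cH) = \Delta(\cH) = \Delta$; with $S = V(G)$ this gives $\epsilon(\cH[S]) = \Delta$, $\Delta(G[S]) = \Delta$ and $\chi(G\setminus S) = 0$. Writing $\lambda \coloneqq 2(\ln\Delta + \ln\ln\Delta + 3)$ and recalling $h = \Delta+1-t$, I note that $h-1 = \Delta - t$ and $\epsilon(\cH[S]) - h + 1 = t$, so the hypothesis of \cref{thm:rodd} becomes $\min\{\Delta-t,\,t\} \ge \lambda$, which holds precisely when $\ceil{\lambda} \le t \le \Delta - \ceil{\lambda}$. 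Throughout I assume $\Delta$ is large enough, in particular $\Delta \ge 49$ and $2\ceil{\lambda} \le \Delta/2$.

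For $t$ in the main range $\ceil{\lambda} \le t \le \Delta - \ceil{\lambda}$, I would apply \cref{thm:rodd} with $S = V(G)$ and split according to its own case distinction. If $t \ge \Delta/2$, then $\epsilon(\cH[S]) = \Delta \ge 2(h-1)$, so the first branch gives an $h$-odd colouring using at most $\Delta + 32(h-1) = \Delta + 32(\Delta-t) \le 33\Delta$ colours, which is $O(\Delta^2/t)$ since $\Delta \le \Delta^2/t$. If $t < \Delta/2$, then $\epsilon(\cH[S]) = \Delta < 2(h-1)$, so the second branch gives at most $\Delta + 2e^2\,\Delta^{2+1/\ln\Delta}/(\Delta - (h-1)) = \Delta + 2e^2\,\Delta^{2+1/\ln\Delta}/t$ colours; using $\Delta^{1/\ln\Delta} = e$, this equals $\Delta + 2e^3\,\Delta^2/t \le (1+2e^3)\Delta^2/t$, again $O(\Delta^2/t)$.

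It remains to treat $t > \Delta - \ceil{\lambda}$, i.e.\ $h-1 < \lambda$, where $h = O(\ln\Delta)$ and \cref{thm:rodd} is not directly applicable to this $h$. Here I would invoke the monotonicity of $\odd^{h}$ in $h$ (any $h'$-odd colouring with $h' \ge h$ is also $h$-odd, since $\min\{h',|e|\} \ge \min\{h,|e|\}$ for every edge $e$), so $\odd^h(G) \le \odd^{h'}(G)$ with $h' \coloneqq \ceil{\lambda}+1 \ge h$. For $h'$ we have $h'-1 = \ceil{\lambda} \ge \lambda$ and $\epsilon(\cH[S]) - h' + 1 = \Delta - \ceil{\lambda} \ge \lambda$, so \cref{thm:rodd} applies with $S = V(G)$; moreover $\epsilon(\cH[S]) = \Delta \ge 2\ceil{\lambda} = 2(h'-1)$, putting us in the first branch, which yields $\odd^{h'}(G) \le \Delta + 32\ceil{\lambda} = \Delta + O(\ln\Delta) = O(\Delta)$. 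Since $t \le \Delta$, this is $O(\Delta^2/t)$, and the two ranges together cover all admissible $t$.

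The only point I expect to require care is the boundary regime $h-1 < \lambda$, which lies outside the scope of \cref{thm:rodd}; it is handled at no asymptotic cost by the monotonicity of $\odd^h$ in $h$, replacing $h$ by the threshold $\ceil{\lambda}+1$. Everything else is routine: there is no need for \cref{lem:degree-dominating-set} or for a proper induced subgraph $S \subsetneq V(G)$, because with $S = V(G)$ the only extra contribution is the additive term $\Delta(G[S]) = \Delta$, which the target $O(\Delta^2/t)$ absorbs thanks to $t \le \Delta$; and the exponent $2+1/\ln\Delta(\cH)$ appearing in \cref{thm:rodd} collapses to a constant factor via $\Delta^{1/\ln\Delta}=e$.
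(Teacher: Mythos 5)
Your proposal is correct and takes essentially the same route as the paper: \cref{cor:h-odd-delta} is read off from \cref{thm:rodd} applied to the neighbourhood hypergraph with $S=V(G)$, where $\epsilon(\cH[S])=\Delta(\cH)=\Delta$, $h-1=\Delta-t$, $\epsilon(\cH[S])-h+1=t$, and both branches simplify to $O\pth{\Delta^2/t}$ using $\Delta^{1/\ln\Delta}=e$ and $t\le\Delta$. Your handling of the boundary regime $h-1<2(\ln\Delta+\ln\ln\Delta+3)$ (i.e.\ $t$ close to $\Delta$), where the hypothesis of \cref{thm:rodd} fails for $h$ itself, via the monotonicity $\odd^h(G)\le\odd^{h'}(G)$ for $h'\ge h$, is a small but genuine addition: the paper states the corollary as a direct special case and leaves this case (e.g.\ $h=1$) implicit, and your fix closes it at no asymptotic cost.
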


We now show that Corollary~\ref{cor:h-odd-delta} is tight up to a multiplicative constant.

\begin{prop}
For every even integer $\Delta\ge 2$ and $1\le t \le \Delta$, there is a $\Delta$-regular graph $G$ such that, letting $h\coloneqq \Delta+1-t$, one has
\[ \odd^h(G) > \frac{1}{2}\,\frac{\Delta^2}{t+1}. \]
\end{prop}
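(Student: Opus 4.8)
The plan is to exhibit, for each even $\Delta$ and each $1\le t\le\Delta$, an explicit $\Delta$-regular graph $G$ whose $h$-odd chromatic number (with $h=\Delta+1-t$) is forced to be $\Omega(\Delta^2/(t+1))$, and to bound it below by a double-counting argument on the ``odd-colour budget''. The graph $G$ will be a blow-up of the incidence graph of a linear point--line structure in which every two ``core'' (point) vertices are covered by a common ``test'' (line) neighbourhood: a projective plane when the relevant order is a prime power, and a Steiner $2$-design supplied by \cite{Wil75} together with a small padding construction in general. Crucially, each vertex of the incidence graph is blown up into a \emph{clique} (not an independent set), of a size chosen so that (i) point-vertices and line-vertices both acquire degree exactly $\Delta$ --- here the divisibility freedom coming from the parity of $\Delta$ and from the choice of blow-up factor is used --- and (ii) inside each blown-up line no colour can repeat, so colour collisions within a neighbourhood are forced to spread out. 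I expect the construction to split into two regimes according to whether $t$ is below or above roughly $\sqrt\Delta$, since for large $t$ one can let line-neighbourhoods be essentially unions of cliques, while for small $t$ the full covering-design structure is needed.

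For the lower bound, take an $h$-odd $k$-colouring $\sigma$ of $G$. The local fact driving everything is that each test-vertex $v$ has at least $h$ odd colours among the $\Delta$ vertices of $N(v)$, hence its ``excess'' $\sum_{c}(\operatorname{mult}_c(N(v))-1)^{+}=\Delta-|\sigma(N(v))|$ is at most $\Delta-h=t-1$; so $N(v)$ contains at most $t-1$ monochromatic collisions. Summing over all test-vertices and using that (1) every two core-vertices lie in a common test-neighbourhood, (2) each core-vertex lies in only boundedly many test-neighbourhoods, and (3) the clique blow-up caps the multiplicity of each colour inside a neighbourhood, one obtains an upper bound of (number of test-vertices)$\,\times\,O(t)$ on $\sum_{\text{colour class }C}\binom{|C|}{2}$. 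Since the design parameters make the number of core-vertices $\Theta\!\big(\tfrac{\Delta^2}{t}\cdot(\text{typical class size})\big)$, the convexity bound $\sum_{C}\binom{|C|}{2}\ge N_c\binom{(\#\text{core})/N_c}{2}$ then forces the number $N_c$ of colours used on the core to be at least $\tfrac12\Delta^2/(t+1)$ once the constants are tracked, whence $\odd^h(G)\ge N_c$.

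The main obstacle --- which is really the whole point --- is getting the denominator $t+1$ and not $(t+1)^2$. Naively, a single colour could occupy $t$ slots of a neighbourhood, contributing $\binom t2\approx t^2/2$ monochromatic pairs while using only $t-1$ of the excess budget, costing a factor $t$; this is exactly why a plain blow-up of a projective-plane incidence graph only yields $\Omega(\Delta^2/t^2)$ and in fact fails the required bound already for $t\ge 4$. Defeating the concentration requires the incidence structure to be arranged so that two core-vertices appearing together in a test-neighbourhood cannot cheaply receive the same colour --- so that collisions are forced across the blown-up cliques and hence number $O(t)$ per neighbourhood --- while keeping $G$ exactly $\Delta$-regular. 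Securing this simultaneously for all even $\Delta$ (including orders with no projective plane) and across all $1\le t\le\Delta$, then gluing the two regimes and verifying that the resulting constant beats $\tfrac12$, is where essentially all the effort goes; once the structure is fixed the counting/convexity part is routine, and the construction then also witnesses the tightness claimed after \cref{cor:h-odd-delta}.
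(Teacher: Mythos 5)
There is a genuine gap: you never actually produce the graph. Your counting framework (bounding the per-neighbourhood ``excess'' by $t-1$, summing over test-vertices, and playing this against a convexity lower bound on the number of monochromatic pairs per colour class) is exactly the right engine, and it is essentially what the paper runs. But you correctly identify that the whole difficulty is to force every colour to appear at most a bounded number of times (ideally twice) in each neighbourhood --- otherwise one colour can eat $\binom{t}{2}$ collisions for an excess cost of only $t-1$ and the bound degrades to $\Omega(\Delta^2/t^2)$ --- and then you leave precisely that point open: the clique blow-up of a Steiner/projective incidence graph only prevents repetitions \emph{inside one blown-up point}, while a line-neighbourhood is a union of many such cliques, so a colour can still appear once per clique and concentrate. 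Saying that ``arranging the incidence structure so that collisions are forced across the blown-up cliques'' is ``where essentially all the effort goes'' is an accurate self-diagnosis, not a proof; as written, the construction satisfying your desiderata (multiplicity cap, exact $\Delta$-regularity for every even $\Delta$, all $1\le t\le \Delta$, constant beating $\tfrac12$) does not exist in the proposal. Minor further frictions: in a Steiner $2$-design $S(2,k;q)$ each point lies in $\frac{q-1}{k-1}$ blocks, so your assumption (2) that core-vertices lie in boundedly many test-neighbourhoods is generally false, and the claimed two-regime split and padding are never specified.

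The paper closes exactly this gap with one uniform, very simple choice: $G=L(K_{n,n})$ with $n=\Delta/2+1$ (the $n\times n$ rook's graph), which is $\Delta$-regular for every even $\Delta$. There every neighbourhood is the union of two cliques (a row and a column), so each colour occurs at most twice in any $N(v)$; the $h$-odd condition then forces at most $\frac{t-1}{2}$ doubled colours per neighbourhood, giving $S=\sum_v s_\sigma(v)\le \frac{t-1}{2}n^2$. Each colour class is a matching of size $m_i$ in $K_{n,n}$ and contributes exactly $m_i(m_i-1)$ to $S$, so convexity yields $S\ge n^2\bigl(\frac{n^2}{k}-1\bigr)$ and hence $k\ge \frac{2n^2}{t+1}>\frac{1}{2}\frac{\Delta^2}{t+1}$. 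If you want to salvage your write-up, replacing the design blow-up by this line-graph construction is the missing ingredient; no case split in $t$, no Wilson's theorem, and no padding are needed.
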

\begin{proof}
    Let $n\coloneqq \frac{\Delta}{2}+1$, and let $G\coloneqq L(K_{n,n})$ be the line-graph of a complete bipartite graph. Let $k\coloneqq \odd^h(G)$, and let $\sigma$ be a $h$-odd $k$-colouring of $G$.

    Let $v\in V(G)$. The neighbourhood of $v$ can be covered with two cliques, so each colour in $N(v)$ has at most $2$ occurrences. Let $s_\sigma(v)$ denote the number of colours with $2$ occurrences in $N(v)$; we must have $s_\sigma(v) \le (t-1)/2$. So
    \begin{equation}
        \label{eq:monopairs}
        S \coloneqq \sum_{v\in V(G)} s_\sigma(v) \le \frac{t-1}{2}n^2.
    \end{equation}
    Let $(M_1, \ldots, M_k)$ be the colour classes of $\sigma$. If $M_i$ has size $m_i$, then it has a contribution of $m_i(m_i-1)$ to $S$. Indeed, $M_i$ is a matching of size $m_i$ in $K_{n,n}$, and there are $m_i(m_i-1)$ edges incident to $2$ edges from $M_i$ in $K_{n,n}$; each of them corresponds to a vertex in $G$ with a monochromatic pair of colour $i$ in its neighbourhood. So by convexity we have 
    \[ S = \sum_{i=1}^k m_i(m_i-1) \ge k\cdot \frac{n^2}{k} \pth{\frac{n^2}{k}-1} = n^2\pth{\frac{n^2}{k}-1}. \]
    Combining this with \eqref{eq:monopairs}, we obtain
    \[ k \ge \frac{2n^2}{t+1} > \frac{1}{2}\frac{\Delta^2}{t+1}. \]
\end{proof}

We finish this section with the following consequence of Theorem~\ref{thm:chi-bound2-h}.

\begin{cor}\label{cons:hodd-multiplicative}
    Let $G$ be a graph of maximum degree $\Delta$, and let $h\ge 2(\ln \Delta(\cH)+\ln \ln \Delta(\cH)+3)$ be a given integer. If the minimum degree of $G$ is at least $2h$, then 
    \[ \odd^h(G) \le 32(h-1)\, \chi(G),\]
     and this is tight up to a multiplicative constant for a family of graphs of increasing chromatic numbers.
\end{cor}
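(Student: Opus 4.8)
The plan is to read the upper bound off from \cref{thm:chi-bound2-h}, and to prove tightness with a construction that mirrors the one following \cref{thm:chi-bound2} for the case $h=1$.

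For the upper bound, I would apply \cref{thm:chi-bound2-h} with $\cH$ the neighbourhood-hypergraph of $G$ (so that $\Delta(\cH)=\Delta(G)=\Delta$ and $\epsilon(\cH)=\delta(G)$) and $S\coloneqq V(G)$. Then $G\setminus S$ is empty, $G[S]=G$, and $\epsilon(\cH[S])=\delta(G)\ge 2h>2(h-1)$, so the quantity $\min\{h-1,\epsilon(\cH[S])-h+1\}$ equals $h-1$, which is at least $2(\ln\Delta+\ln\ln\Delta+3)$ under the hypothesis; the first case of \cref{thm:chi-bound2-h} then gives $\odd^h(G)\le 0+32(h-1)\chi(G)$. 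The range $\Delta<49$, not covered by \cref{thm:chi-bound2-h}, causes no trouble: there $h$ is bounded and the asserted bound already follows from the greedy bound $\odd^h(G)\le h\Delta+1$.

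For tightness, I would fix a large integer $h$, an integer $k_0$ with $2\le k_0\le e^{h/3}$, and build $G$ from the complete $k_0$-partite graph $G_0$ with parts $X_1,\dots,X_{k_0}$, each of size $2h$, by adding for every $i$ a single new vertex $v_i$ with $N(v_i)=X_i$. The routine verifications are $\chi(G)=k_0$ (colour each $X_i$ monochromatically and give $v_i$ a colour distinct from that of $X_i$, and $G\supseteq K_{k_0}$), $\delta(G)=2h$ (realised by the $v_i$'s, every other degree being at least $(k_0-1)\cdot 2h+1$), and $\Delta(G)=2h(k_0-1)+1<2hk_0\le 2h\,e^{h/3}$, which forces $h\ge 2(\ln\Delta(G)+\ln\ln\Delta(G)+3)$ once $h$ is large; hence $G$ satisfies the hypotheses. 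The heart of the argument is that in any $h$-odd colouring $\sigma$ of $G$ the colour sets $\sigma(X_1),\dots,\sigma(X_{k_0})$ are pairwise disjoint (as $G_0$ is complete multipartite) and each has size at least $h$: were $|\sigma(X_i)|\le h-1$, then $N(v_i)=X_i$ would use at most $h-1$ colours, hence at most $h-1$ odd colours, contradicting that $v_i$ has degree $2h\ge h$ and so must have at least $\min\{h,\deg(v_i)\}=h$ odd colours in its neighbourhood. Summing, $\odd^h(G)\ge hk_0=h\chi(G)$, which together with the upper bound $32(h-1)\chi(G)$ proves tightness up to the constant $32$; letting $h\to\infty$ with $k_0\coloneqq\floor{e^{h/3}}$ yields the desired family with $\chi(G)\to\infty$.

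The step I expect to be the (mild) obstacle is the same balancing act as in the $h=1$ case: the extremal graph must simultaneously have minimum degree at least $2h$ and maximum degree at most roughly $e^{h/2}$, the latter being forced by the hypothesis $h\ge 2(\ln\Delta(G)+\dots)$. This is easy here precisely because only one vertex is appended per part, keeping $\Delta(G)=O(hk_0)$; a finer construction, in the spirit of the $7/6$ adjustment used for $h=1$, would be needed only to push the lower bound from $h\chi(G)$ towards $32(h-1)\chi(G)$, which is not required for tightness up to a constant.
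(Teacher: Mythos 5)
Your proposal is correct, and it follows the paper for the upper bound while taking a genuinely different route for the tightness construction. For the upper bound you do exactly what the paper does: apply Theorem~\ref{thm:chi-bound2-h} to the neighbourhood hypergraph with $S=V(G)$, where $\delta(G)\ge 2h$ makes the minimum in the hypothesis equal to $h-1$ and removes the $\chi(G\setminus S)$ term; your extra remark disposing of $\Delta<49$ via the greedy bound is a verification the paper leaves implicit, and the small off-by-one between the corollary's hypothesis $h\ge 2(\ln\Delta+\ln\ln\Delta+3)$ and the theorem's requirement on $\min\{h-1,\cdot\}$ is present in the paper's own statement as well, so it is not a defect of your argument. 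Where you diverge is the lower-bound construction. The paper takes parts $X_i$ of size $hn_0$ split into $n_0$ blocks of size $h$ and attaches $\binom{n_0}{2}$ vertices per part, one for each pair of blocks, adjacent to the union of the two blocks; an averaging argument (a colour that is odd in $p$ blocks is odd for only $p(n_0-p)\le n_0^2/4$ of these attached vertices, against a total demand of $h\binom{n_0}{2}$) forces $|\sigma(X_i)|\ge\pth{2-\frac{2}{n_0}}h$, hence $\odd^h(G)\ge(2-\eps)h\,\chi(G)$. You instead attach a single vertex $v_i$ with $N(v_i)=X_i$ and $|X_i|=2h$, and use only that the $h$ odd colours of $N(v_i)$ must appear on $X_i$, giving $|\sigma(X_i)|\ge h$ and $\odd^h(G)\ge h\,\chi(G)$. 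This is a factor $2$ weaker than the paper's bound but still within a universal constant of the $32(h-1)\chi(G)$ upper bound, so it fully proves the stated tightness; moreover your explicit coupling $k_0=\floor{e^{h/3}}$ with $h\to\infty$, which keeps $\delta(G)=2h$ and $h\ge 2(\ln\Delta(G)+\ln\ln\Delta(G)+3)$ while sending $\chi(G)=k_0\to\infty$, is a hypothesis check the paper omits for its own construction. In short: same argument for the upper bound; a simpler, slightly lossier construction for the lower bound, traded for an easier verification.
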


\begin{figure}[ht]
\begin{center}
\noindent
\includegraphics[width=280.6pt]{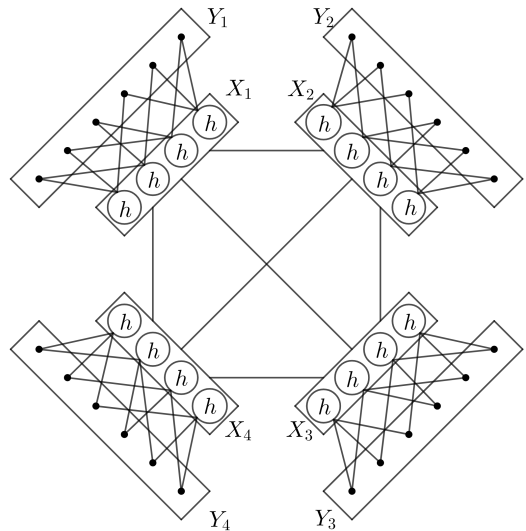}
\noindent
\setlength{\unitlength}{1pt}
\captionsetup{font={small}}
\caption{The construction in Corollary~\ref{cons:hodd-multiplicative} when $k_0=n_0=4$. A line between two sets stands for a complete bipartite graph.}\label{pic:cons-cor-6.8}
\end{center}
%\todo[inline]{Put a single line between each vertex and $h$-set, and explain that when there is a line between two sets, it stands for a complete bipartite graph.}
\end{figure}

\begin{proof}
    The upper bound can be derived from Theorem~\ref{thm:chi-bound2-h} by letting $\cH$ be the neighbourhood-hypergraph of $G$, and setting $S\coloneqq V(G)$.
    
    To prove the tightness of the bound, we show that given any $\eps>0$, there is a graph $G$ such that 
    \[\odd^h(G) \ge (2-\eps)h\,\chi(G).\]
    Fix an integer $k_0\ge 2$, and let $n_0 \ge k_0$ be an even integer.
    Let $G_0$ be a complete $k_0$-partite graph, with parts $X_1, \ldots, X_{k_0}$.
    For every $i\in [k_0]$, $X_i$ contains $n_0$ $h$-sets and the size of $X_i$ is $hn_0$. For every $i\in [k_0]$ and for every pair of $h$-sets $S$ in $X_i$, we add a vertex with neighbourhood $S$ in $G_0$. Note that for each $X_i$, we add $\binom{n_0}{2}$ vertices and put these vertices in a vertex set $Y_i$. Let $G$ be the obtained graph (see Fig.~\ref{pic:cons-cor-6.8}); let us show that 
    
    \[\odd^h(G) \ge \pth{2-\frac{2}{n_0}}h k_0.\]

    We write $k\coloneqq \odd^h(G)$, and let $\sigma$ be an $h$-odd $k$-colouring of $G$.
    First observe that we must have $\sigma(X_i) \cap \sigma(X_j) = \emptyset$ for every $i\neq j$, otherwise we would find a monochromatic edge in $\sigma$. 
    So it suffices to show that $|\sigma(X_i)| \ge \pth{2-\frac{2}{n_0}}h$ for every $i\in [k_0]$. On the one hand, each vertex $v\in Y_i$ has at least $h$ odd colours. On the other hand, for each $X_i$ and each colour $c$ in $\sigma$, let $0\le p\le n_0$ be the number of $h$-sets in $X_i$ where $c$ is an odd colour. Clearly, $c$ is an odd colour of $p(n_0-p)\le \frac{n_0^2}{4}$ vertices in $Y_i$. So we need at least $\frac{4\binom{n_0}{2}}{n_0^2} h = \pth{2-\frac{2}{n_0}}h$ colours to make every vertex have $h$ odd colours in $\sigma(X_i)$.
    %Note that $\lim\limits_{n_0\to\infty}\frac{\binom{n_0}{2}h}{\floor{\frac{n_0}{2}}\ceil{\frac{n_0}{2}}}=2h$. 
    We also have $\chi(G)=\chi(G_0)=k_0$, so this concludes the proof. 
    
    % For each vertex $v\in Y_i$, let $\{c_{\sigma}^1(v),c_{\sigma}^2(v),\ldots,c_{\sigma}^h(v)\}$ be the $h$ odd colours of $v$. For each color $c_i(v)$, let $p$ be the number of the $h$-sets that contain $c_i(v)$. There are at most $p(n_0-p)$ vertices in $Y_i$ that have $h$ odd colours by $c_i(v)$. 

\end{proof}

% \begin{prop}
% Let $t\ge 1$ be a given integer.
% There exists a family of regular graphs $G$ of increasing degree $\Delta\ge t$ such that, letting $h\coloneqq \Delta+1-t$, one has
% \[ \odd^h(G) > \,\frac{\Delta(\Delta-1)}{t}. \]
% \end{prop}

% \begin{proof}
%     Let us assume that $q\coloneqq h+t-2$ is a prime power, and let $\cH$ be the block-hypergraph of the finite projective plane $S(2,q+1;n)$, where $n=q^2+q+1$. This is a $(q+1)$-uniform hypergraph with $n$ vertices and $n$ edges, of maximum degree $\Delta=q+1$. Let $\sigma$ be an $h$-odd $k$-colouring of $\cH$, with $k=\odd^h(\cH)$.
%     For every $e\in E(\cH)$, at most $(t-1)/2$ colours of $\sigma$ can have at least $2$ occurrences in $e$. Let $S$ be the number of monochromatic pairs in $\sigma$; since every pair of vertices is included in some edge of $\cH$, we have 
%     \[ S \le \frac{t-1}{2}|E(\cH)|=\frac{(t-1)n}{2}.\]
%     Let $(C_1, \ldots, C_k)$ be the colour classes of $\sigma$. Each colour class $C_i$ creates $\binom{|C_i|}{2}$ monochromatic pairs, hence by convexity we have
%     \[ S = \sum_{i=1}^k \binom{|C_i|}{2} \ge k \binom{n/k}{2} = \frac{n}{2}\pth{\frac{n}{k}-1}.\]
%     Combining the two inequalities on $S$, we obtain

%     \[ k \ge \frac{n}{t} \ge \frac{\Delta(\Delta-1)}{t}.\]
    
% \end{proof}

\bibliographystyle{abbrv}
\bibliography{odd}

\begin{thebibliography}{10}

\bibitem{Ber19}
A.~Bernshteyn.
\newblock The {Johansson-Molloy} theorem for {DP}-coloring.
\newblock {\em Random Structures \& Algorithms}, 54(4):653--664, 2019.

\bibitem{CPS22}
Y.~Caro, M.~Petru{\v{s}}evski, and R.~{\v{S}}krekovski.
\newblock Remarks on odd colorings of graphs.
\newblock {\em Discrete Applied Mathematics}, 321:392--401, 2022.

\bibitem{CPS23}
Y.~Caro, M.~Petru{\v{s}}evski, and R.~{\v{S}}krekovski.
\newblock Remarks on proper conflict-free colorings of graphs.
\newblock {\em Discrete Mathematics}, 346(2):113221, 2023.

\bibitem{Chei09}
P.~Cheilaris.
\newblock {\em Conflict-free coloring}.
\newblock PhD thesis, City University of New York, 2009.

\bibitem{Chei13}
P.~Cheilaris, B.~Keszegh, and D.~P\'{a}lv\"{o}lgyi.
\newblock Unique-maximum and conflict-free colouring for hypergraphs and tree
  graphs.
\newblock {\em SIAM J. Discrete Math}, 27, 2013.

\bibitem{CCKP22+pcf}
E.-K. Cho, I.~Choi, H.~Kwon, and B.~Park.
\newblock Proper conflict-free coloring of sparse graphs.
\newblock {\em arXiv preprint arXiv:2203.16390}, 2022.

\bibitem{CCKP23}
E.-K. Cho, I.~Choi, H.~Kwon, and B.~Park.
\newblock Odd coloring of sparse graphs and planar graphs.
\newblock {\em Discrete Mathematics}, 346(5):113305, 2023.

\bibitem{Cra22+}
D.~W. Cranston.
\newblock Odd colorings of sparse graphs.
\newblock {\em arXiv preprint arXiv:2201.01455}, 2022.

\bibitem{CLS23}
D.~W. Cranston, M.~Lafferty, and Z.-X. Song.
\newblock A note on odd colorings of 1-planar graphs.
\newblock {\em Discrete Applied Mathematics}, 330:112--117, 2023.

\bibitem{CrLi22+}
D.~W. Cranston and C.-H. Liu.
\newblock Proper conflict-free coloring of graphs with large maximum degree.
\newblock {\em arXiv preprint arXiv:2211.02818}, 2022.

\bibitem{even2003conflict}
G.~Even, Z.~Lotker, D.~Ron, and S.~Smorodinsky.
\newblock Conflict-free colorings of simple geometric regions with applications
  to frequency assignment in cellular networks.
\newblock {\em SIAM Journal on Computing}, 33(1):94--136, 2003.

\bibitem{Fab23}
I.~Fabrici, B.~Lu{\v{z}}ar, S.~Rindo{\v{s}}ov{\'a}, and R.~Sot{\'a}k.
\newblock Proper conflict-free and unique-maximum colorings of planar graphs
  with respect to neighborhoods.
\newblock {\em Discrete Applied Mathematics}, 324:80--92, 2023.

\bibitem{Hic22+}
R.~Hickingbotham.
\newblock Odd colourings, conflict-free colourings and strong colouring
  numbers.
\newblock {\em arXiv preprint arXiv:2203.10402}, 2022.

\bibitem{kostochka2012conflict}
A.~Kostochka, M.~Kumbhat, and T.~{\L}uczak.
\newblock Conflict-free colourings of uniform hypergraphs with few edges.
\newblock {\em Combinatorics, Probability and Computing}, 21(4):611--622, 2012.

\bibitem{Liu22+}
C.-H. Liu.
\newblock Proper conflict-free list-coloring, subdivisions, and layered
  treewidth.
\newblock {\em arXiv preprint arXiv:2203.12248}, 2022.

\bibitem{pach2009conflict}
J.~Pach and G.~Tardos.
\newblock Conflict-free colourings of graphs and hypergraphs.
\newblock {\em Combinatorics, Probability and Computing}, 18(5):819--834, 2009.

\bibitem{PePo23}
J.~Petr and J.~Portier.
\newblock The odd chromatic number of a planar graph is at most 8.
\newblock {\em Graphs and Combinatorics}, 39(2):28, 2023.

\bibitem{PeSk22}
M.~Petru{\v{s}}evski and R.~{\v{S}}krekovski.
\newblock Colorings with neighborhood parity condition.
\newblock {\em Discrete Applied Mathematics}, 321:385--391, 2022.

\bibitem{Wil75}
R.~M. Wilson.
\newblock Constructions and uses of pairwise balanced designs.
\newblock In {\em Combinatorics: Proceedings of the NATO Advanced Study
  Institute held at Nijenrode Castle, Breukelen, The Netherlands 8--20 July
  1974}, pages 19--42. Springer, 1975.

\end{thebibliography}
\end{document}